\definecolor{shadecolor}{rgb}{1,0.8,0.3}
\definecolor{linkcolor}{rgb}{0.5,0,0}
\definecolor{mycitecolor}{rgb}{0,0,0.7}
\definecolor{myrefcolor}{rgb}{0,0,0.7}
\definecolor{hyperrefcolor}{rgb}{0.5,0,0}
\definecolor{lblue}{rgb}{0,250,255}
\definecolor{lyellow}{rgb}{20,20,0}
\definecolor{purple}{HTML}{6e0280}
\definecolor{darkorange}{HTML}{b55a2a}
\definecolor{darkgreen}{rgb}{0,100,0}
\definecolor{lightblue}{rgb}{0.8,0.9,1}
\definecolor{darkgreen}{rgb}{0,0.5,0.2}
\definecolor{darkblue}{rgb}{0,0,100}
\definecolor{darkred}{HTML}{d40000}
\newcommand{\re}[1]{\textcolor{darkred}{#1}}
\newcommand{\bl}[1]{\textcolor{blue}{#1}}
\newcommand{\p}[1]{\textcolor{purple}{#1}}
\newcommand{\doi}[1]{DOI: \url{#1}}
\crefname{equation}{}{}
\newcolumntype{2}{D{.}{}{2.0}}
\crefname{defn}{Definition}{Definitions}
\crefname{thm}{Theorem}{Theorems}
\crefname{lem}{Lemma}{Lemmas}
\crefname{eg}{Example}{Examples}
\newcommand{\coker}{\mathop{\mathrm{coker}}}
\newcommand{\Eq}{\mathop{\mathrm{eq}}}
\newcommand{\coeq}{\mathop{\mathrm{coeq}}}
\newcommand{\simrightarrow}{\xrightarrow{\raisebox{-3pt}[0pt][0pt]{\ensuremath{\sim}}}} 
\newcommand{\N}{\mathbb{N}}
\newcommand{\Z}{\mathbb{Z}}
\newcommand{\R}{\mathbb{R}}
\newcommand{\cat}{\mathsf{Cat}}
\newcommand{\Set}{\mathsf{Set}}
\newcommand{\Gph}{\mathsf{Gph}}
\newcommand{\Comm}{\mathsf{Comm}}
\newcommand{\Mon}{\mathsf{Mon}}
\newcommand{\A}{\mathsf{A}}
\newcommand{\B}{\mathsf{B}}
\newcommand{\C}{\mathsf{C}}
\newcommand{\G}{\mathsf{G}}
\newcommand{\K}{\mathsf{K}}
\newcommand{\D}{\mathsf{D}}
\newcommand{\X}{\mathsf{X}}
\newcommand{\Fin}{\mathsf{Fin}}
\newcommand{\F}{\mathcal{F}}
\renewcommand{\P}{\mathcal{P}}
\newcommand{\Q}{\mathcal{Q}}
\newcommand{\Free}{\mathsf{Free}}
\newcommand{\Und}{\mathrm{Und}}
\newcommand{\disc}{\mathrm{disc}}
\newcommand{\Disc}{\mathsf{Disc}}
\renewcommand{\Vert}{\mathrm{vert}}
\newcommand{\bicat}{\mathbf}
\newcommand{\Cat}{\bicat{Cat}}
\newcommand{\double}[1]{\mathbf{\mathbb #1}}
\newcommand{\lCsp}{\double{Csp}}
\newcommand{\lOpen}{\double{Open}}
\newcommand{\lB}{\double{B}}
\newcommand{\lD}{\double{D}}
\newcommand{\define}[1]{{\bf \boldmath{#1}}}
\tikzset{
    regulations/.cd,
    act/.style={-{Stealth}}, 
    rep/.style={-{Bar}}, 
}
\definecolor{rewritecolor}{rgb}{0,.9,1}
\tikzset{rewritenode/.style={shape=circle,fill=rewritecolor,scale=0.25,font=\Huge}}
\tikzset{RWopen/.style={shape=circle,draw=black,fill=white,scale=0.5,font=\Huge}}
\tikzset{RWclosed/.style={shape=circle,fill=black,scale=0.5,font=\Huge}}
\tikzset{CDnode/.style={shape=circle,fill=white,scale=.5}}
\let\ea\expandafter
\tikzstyle{inarrow}=[->, >=stealth, shorten >=.03cm]
\tikzstyle{empty}=[circle,fill=none, draw=none]
\tikzstyle{inputdot}=[circle,fill=purple,draw=purple, scale=.25]
\tikzstyle{inputarrow}=[->,draw=purple, shorten >=.05cm]
\tikzstyle{simple}=[-,draw=purple,line width=1.000]
\tikzstyle{none}=[inner sep=0pt]
\tikzset{->-/.style={decoration={
  markings,
  mark=at position .5 with {\arrow{>}}},postaction={decorate}}}
\def\mdef#1#2{\ea\ea\ea\gdef\ea\ea\noexpand#1\ea{\ea\ensuremath\ea{#2}}}
\def\alwaysmath#1{\ea\ea\ea\global\ea\ea\ea\let\ea\ea\csname your@#1\endcsname\csname #1\endcsname
  \ea\def\csname #1\endcsname{\ensuremath{\csname your@#1\endcsname}}}
\mdef\fahat{\hat{\fa}}
\newcommand{\op}{\mathrm{op}}
\newcommand{\id}{\mathrm{id}}
\newcommand{\Ob}{\mathrm{Ob}}
\newcommand{\Mor}{\mathrm{Mor}}
\newcommand{\maps}{\colon}
\def\defthm#1#2{%
  \newtheorem{#1}{#2}[section]%
  \expandafter\def\csname #1autorefname\endcsname{#2}%
  \expandafter\let\csname c@#1\endcsname\c@thm}
\newtheorem{thm}{Theorem}[section]
\theoremstyle{definition}
\theoremstyle{remark}
\mdef\fchk{\check{f}}
\definecolor{purple(x11)}{rgb}{0.5, 0.0, 0.5}
\title{Motifs and Emergent Feedback in Labeled Graphs}
\author{John\ C.\ Baez$^{1,2}$ and Adittya Chaudhuri$^{3,4}$}
\address{$^1$School of Mathematics, University of Edinburgh, James Clerk Maxwell Building, Peter Guthrie Tait Road, Edinburgh, UK EH9 3FD}
\address{$^2$Department of Mathematics, University of California, Riverside CA, USA 92521}
\address{$^3$ Institute of Computer Science, University of Rostock, Albert-Einstein-Str. 22, 18059 Rostock, Germany}
\address{$^4$ Statistics and Mathematics Unit, Indian Statistical Institute Kolkata, 203 Barrackpore Trunk Road, Kolkata 700108, India}
\email{baez@math.ucr.edu,  chaudhuriadittya@gmail.com}
\begin{document}

\begin{abstract}
In fields ranging from business to systems biology, directed graphs with edges labeled by signs are used to model systems in a simple way: the nodes represent entities of some sort, and an edge indicates that one entity directly affects another either positively or negatively.  Multiplying the signs along a directed path of edges lets us determine indirect positive or negative effects, and if the path is a loop we call this a positive or negative feedback loop.  Here we generalize this to graphs with edges labeled by a monoid, whose elements represent `polarities' possibly more general than simply `positive' or `negative'.  We study three notions of morphism between graphs with labeled edges, each with its own distinctive application: to refine a simple graph into a complicated one, to transform a complicated graph into a simple one, and to find recurring patterns called `motifs'.  We construct three corresponding symmetric monoidal double categories of `open' graphs.   We also study feedback loops using a generalization of the homology of a graph to homology with coefficients in a commutative monoid.  In particular, we describe the emergence of new feedback loops when we compose open graphs using a variant of the Mayer--Vietoris exact sequence for homology with coefficients in a commutative monoid.
\end{abstract}

\maketitle
\section{Introduction}

Graphs with edges labeled by elements of a fixed set are widely used as qualitative models of systems.   For example:
\[
\begin{tikzpicture}[scale=1]
\node (A) at (0,2.5) {\phantom{maternal mortality}};
\node (A') at (0,2.65) {maternal mortality};
\node (B) at (-2.5,1.5) {government action};
\node (C) at (-2.5,0.5) {spending on maternal health care};
\node (D) at (0,-1) {quality of care};
\node (E) at (2.5,0.5) {use of maternal health care};
\node (F) at (2.5,1.5) {patient waiting time};
\path[->,font=\scriptsize,>=angle 90]
(A) edge[bend right] node[above, pos=0.5]{$+$} (B)
(B) edge[bend right=10] node[left, pos=0.5]{$+$} (C)
(C) edge[bend right] node[below, pos=0.5]{$+$}(D)
(D) edge node[left, pos=0.5]{$-$}(A')
(D) edge[bend right] node[below, pos=0.5]{$+$}(E)
(E) edge[bend left=20] node[below, pos=0.5]{$-$} (A')
(E) edge[bend left] node[left, pos=0.5]{$+$}(F)
(F) edge[bend left] node[right, pos=0.5]{$-$}(E)
(F) edge[bend right] node[above, pos=0.5]{$+$}(A);
\end{tikzpicture}
\]
This is a model of how health care can affect maternal mortality,  a tiny simplified fragment of some larger
models in the literature \cite{Lembani2018, SookaRwashana2011}.  Vertices of this graph represent various variables in a system.   Labeled edges indicate how one variable can directly affect another.  The label set in this example is $\{+,-\}$.    An edge labeled by $+$ indicates that one variable has a direct positive effect on another: that is, increasing the former tends to increase the latter, everything else being equal.   Similarly, an edge labeled by $-$ indicates that one variable has a direct negative effect on another.   In fact the set $\{+,-\}$ naturally has the structure of a monoid, with this multiplication table:
\vskip 0.5em
\renewcommand\arraystretch{1.3}
\setlength\doublerulesep{0pt}
\begin{center}
\begin{tabular}{c||c|c}
$\cdot$ & $+$ & $-$ \\
\hline\hline
$+$ & $+$ & $-$ \\ 
\hline
$-$ & $-$ & $+$ \\ 
\end{tabular}
\end{center}
\vskip 0.5em
Other monoids are also commonly used to label edges in diagrams of this sort, and even more could be useful;  we discuss these alternatives throughout the paper.  The elements of such labeling sets are sometimes called `polarities'.

Graphs with edges labeled by elements of $\{+,-\}$ are called `causal loop diagrams' in the modeling tradition known as `system dynamics'.   System dynamics was first thoroughly explained in Forrester's 1961 book \textsl{Industrial Dynamics}, with the main application being to model fluctuations in supply chains \cite{Forrester1961}.  Later Forrester collaborated with the mayor of Boston and applied the ideas to urban planning in his book \textsl{Urban Dynamics} \cite{Forrester1969}.  In 2000, Sterman wrote an influential text \textsl{Business Dynamics} applying these ideas to the internal functioning of a business \cite{Sterman2000}.  Later, system dynamics became widely used in health care and other subjects, and in 2014, Hovmand emphasized the process of building models by gathering and synthesizing information from diverse stakeholders in \textsl{Community Based System Dynamics} \cite{Hovmand2014}.   By now there is a thriving community of systems dynamics practitioners, with an annual international conference, and software that helps them work with causal loop diagrams.  

In a parallel line of development, biologists began using graphs with labeled edges to indicate how one type of biomolecule can promote or inhibit the production of another.    When applied to the expression of genes, these graphs are called  `gene regulatory networks' \cite{Davidson2006,Kauffman1969}.   These and other networks became important in `systems biology',  a holistic approach to biological systems that aims to understand how the interactions between their components lead to emergent behaviors \cite{Alon2006,SystemsBiologyTextbook}.    Since the 1990s, large databases of biological networks have been created, integrating visualizations with molecular data, ontologies and references to the literature \cite{KEGG2025}.   Eventually standardization became necessary, and a committee was formed to develop Systems Biology Graphical Notation (SBGN) \cite{le_novere_systems_2009}.  This is a system of visual representations of biological networks to facilitate clear, consistent communication and the reuse of information. Over time, SBGN has become a widely adopted standard for visualizing biological networks at varying levels of detail.  It encompasses three distinct yet complementary visual languages:
\begin{itemize}
\item \emph{Process Description} (SBGN-PD) focuses on detailed biochemical reactions  \cite{rougny_systems_2019}.
\item \emph{Activity Flow} (SBGN-AF) illustrates 
the flow through which various biochemical entities and activities influence each other  \cite{mi_systems_2015}.
\item \emph{Entity Relationship} (SBGN-ER)  highlights how entities affect each other's actions and behaviors \cite{sorokin_systems_2015}.
\end{itemize}
The second author and his collaborators have recently attempted to develop a compositional framework for SBGN-PD diagrams via the theory of `open process networks' \cite{chaudhuri2024mathematicalframeworkstudyorganising}, motivated by the theory of open Petri nets \cite{BaezMaster2020}.   The current work is instead connected to SBGN-AF diagrams. These diagrams resemble regulatory networks but are more general, as they incorporate a broader range of influences. Alongside the usual positive and negative effects of one biological entity on another, they can also represent relationships such as necessary stimulation and unknown influence.

We study three kinds of morphisms between labeled graphs, which require increasing amounts of structure on the label set:
\begin{enumerate}
\item `maps' between $L$-labeled graphs for a set $L$.  Here is an example taking $L = \N$: 
\[
\begin{tikzcd}[row sep = "0.1 em"]
u_1 \arrow[dr, "1", pos = 0.3] \\
                           &  v \arrow[r, "2"]  & w  &         &    {} \arrow[r, "f"] & {} & u \arrow[r, "1"]    & v \arrow[r, "2"] & w  \\
 u_2 \arrow[ur, "1", swap, pos = 0.2] 
\end{tikzcd}
\]
Here the edge labels simply get pulled back along a map of graphs.  As explained in \cref{Sec:Set-labeled_graphs}, maps between set-labeled graphs can be useful for refining a simple model of a system to a more complex model.   The more complex model is the \emph{source} of the map of $L$-labeled graphs; in the example above we refined a model with one entity called $u$ to one with two similar entities called $u_1$ and $u_2$.
\item `Kleisli morphisms' between $M$-labeled graphs for a monoid $M$.  Here is an example taking $M = \N$ made into a monoid using addition:
\[
\begin{tikzcd}
u \arrow[rr, "5"]  
& & w  &    
&    {} \arrow[r, "f"] & {} & u \arrow[r, "3"]    & v \arrow[r, "2"] & w          
\end{tikzcd}
\]
Here an edge can get mapped to a path of edges if its label is the sum of their labels.   In \cref{Sec:Monoid-labeled_graphs} we argue that many important sets of labels in systems dynamics and systems biology are in fact monoids.   \cref{Sec:Motifs} explains how Kleisli morphisms can be used to find `motifs': simple patterns which play important structural roles in many systems.
\item `additive morphisms' between finite $C$-labeled graphs whenever $C$ is a commutative monoid.  Here is an example taking $C = \N$ made into a commutative monoid using addition:
\[
\begin{tikzcd}
u \arrow[r, "1", bend left=40] \arrow[r, "1"] \arrow[r, "2", bend right=40] 
& v \arrow[r, "1", bend left=20] \arrow[r, "1", swap, bend right=20] & w  &                              
 &    {} \arrow[r, "f"] & {} & u \arrow[r, "4"]                                                 & v \arrow[r, "2"] & w          
\end{tikzcd}
\]
Here when several edges get mapped to the same edge, we sum their labels.   In systems dynamics and systems biology, most of the monoids of labels are commutative.   \cref{Sec:Commutative_monoid-labeled_graphs} explains how additive morphisms can be used to simplify a complex model of a system.
\end{enumerate}

We also study `open' labeled graphs, which are roughly labeled graphs with some vertices specified as `inputs' or `outputs'.  The advantage of these is that large labeled graphs built 
out of small open labeled graphs by a process of composition.  We consider three variants of this theme:

\begin{enumerate}
\item In \cref{Thm:Open_set-labeled_graphs}, for any set $L$ we construct a symmetric monoidal double category of open $L$-labeled graphs and maps between these.   This uses the theory of structured cospans \cite{BaezCourser2020,BaezCourserVasilakopoulou2022,CourserThesis}.   
\item In \cref{Thm:Open_monoid-labeled_graphs}, for any monoid $M$ we construct a symmetric monoidal double category of open $M$-labeled graphs and Kleisli morphisms between these.   This pushes the theory of structured cospans slightly beyond its usually stated level of generality.
\item In \cref{Thm:Open_commutative_monoid-labeled_graphs}, for any commutative monoid $C$ we construct a symmetric monoidal double category of open $C$-labeled finite graphs and additive morphisms between these.  The theory of structured cospans seems unable to accomplish this, but we can do it using the theory of decorated cospans \cite{BaezCourserVasilakopoulou2022, Fong2015,FongThesis}.
\end{enumerate}

We also study feedback loops in graphs, and how new feedback loops can emerge when one composes open graphs.   To study these, in \cref{Sec:Feedback_loops} we define the first homology monoid of a graph $G$ with coefficients in a commutative monoid $C$.  When $C$ is an abelian group this monoid reduces to the familiar homology group with coefficients in $C$, which does not depend on the direction of the edges of $G$.    When $C$ is a monoid without inverses, such as $\N$ with addition, the first homology detects \emph{directed}  loops in $G$.   This is crucial for the study of feedback, since an edge $A \to B$ indicates that $A$ affects $B$, but not necessarily vice versa.   We carry out a detailed study of the first homology of $G$ with coefficients in $\N$,  showing in \cref{Thm:simple_loops_and_minimal_cycles} that it is generated by homology classes of `simple loops': that is, directed loops that do not cross themselves.

In \cref{Sec:Emergent_feedback_loops} we study how directed loops emerge when we compose open graphs.  For example, this graph has no directed loops:  
\[
\begin{tikzcd}[column sep = 1.5em, row sep = 0.2 em]
      &    &    & \re{b} \arrow[ld, darkred, -{Stealth[length=2mm]}, bend right, shift left]  
\\
\re{a} \arrow[rrru, darkred,  -{Stealth[length=2mm]}, bend left] 
\arrow[rrrdd, "", darkred,  -{Stealth[length=2mm]},  bend right=60] &  & 
\re{d} \arrow[r, darkred, -{Stealth[length=2mm]},  bend right=15] & \re{e}   
\\
 & \re{c} \arrow[lu, "", darkred,  -{Stealth[length=2mm]},  bend left] 
 \arrow[ru, ""', darkred, -{Stealth[length=2mm]},  bend right]   &  & 
 \re{g} \arrow[ll, "", darkred,  -{Stealth[length=2mm]}, bend left]       
 \\
 &   &    & \re{h}                                                                         
\end{tikzcd}
\]
but when we glue it to the following graph, which also has no directed loops:
\[
\begin{tikzcd}[column sep = 1.5em, row sep = 0.2 em]
  &   &   & \bl{b} &                                                                        \\
 &    &  & \bl{e} \arrow[r, "", blue, -{Stealth[length=2mm]},  bend left=15]    
  & \bl{f} \arrow[ld, "", blue, -{Stealth[length=2mm]}, bend left] 
  \arrow[lu, ""', blue, -{Stealth[length=2mm]}, bend right, shift right] 
  & i   \arrow[ull, "", blue, -{Stealth[length=2mm]}, bend right=50]  
\\
  &  &  & \bl{g}  
 \\
  &  &   & \bl{h}  \arrow[uur, "", blue, -{Stealth[length=2mm]},  bend right]  
  \arrow[uurr, "", blue, -{Stealth[length=2mm]}, bend right]                             
\end{tikzcd}
\]
by identifying the vertices called $b,e,g$ and $h$, we obtain a graph with directed loops, one shown in boldface:
\[
\begin{tikzcd}[column sep = 1.5em, row sep = 0.4em]
  &  &   & \p{b} \arrow[ld, ""', darkred, -{Stealth[length=2mm]}, 
  line width=0.5mm, bend right, shift left] & 
\\
\re{a} \arrow[rrru, darkred, -{Stealth[length=2mm]}, line width=0.5mm,  "",  bend left] 
\arrow[rrrdd, "", darkred, -{Stealth[length=2mm]}, bend right=60] &  & 
\re{d} \arrow[r, darkred, -{Stealth[length=2mm]}, line width=0.5mm, bend right=15] 
 & \p{e}  \arrow[r, "", blue, -{Stealth[length=2mm]}, line width=0.5mm, bend left=15]               
 & \bl{f} \arrow[ld, "", blue, -{Stealth[length=2mm]}, line width=0.5mm, bend left] 
 \arrow[lu, ""', blue, -{Stealth[length=2mm]},  bend right, shift right] 
 & i   \arrow[ull, "", blue, -{Stealth[length=2mm]}, bend right=50]                
 \\
 & \re{c} \arrow[lu, "", darkred, -{Stealth[length=2mm]}, line width=0.5mm, bend left] 
 \arrow[ru, ""', darkred, -{Stealth[length=2mm]}, bend right]   & & 
 \p{g} \arrow[ll, "", darkred, -{Stealth[length=2mm]}, line width=0.5mm, bend left]                                      
 \\
  &  &   & \p{h}  \arrow[uur, "", -{Stealth[length=2mm]}, blue, bend right]   
  \arrow[uurr, "", blue, -{Stealth[length=2mm]}, bend right]                                                                                                             
\end{tikzcd}
\]
In the case of undirected loops the phenomenon of `emergent loops' is well understood by the van Kampen theorem.  For homology with coefficients in an abelian group the analogous phenomenon is well understood using the Mayer--Vietoris exact sequence  \cite{Hatcher2009}.   We prove some related theorems for directed loops and homology with coefficients in $\N$.

\subsection*{Notation}

In this paper we use a sans-serif font like $\C$ for categories, boldface like $\mathbf{B}$ for bicategories or 2-categories, and blackboard bold like $\lD$ for double categories.   Thus, $\cat$ denotes the category of small categories while $\Cat$ denotes the 2-category of small categories.  For double categories with names having more than one letter, like $\lCsp(\X)$, only the first letter is in blackboard bold.

\subsection*{Acknowledgements} 
We thank Nathaniel Osgood for teaching us about causal loop diagrams and system dynamics, and Olaf Wolkenhauer for suggesting the application of causal loop diagrams to Systems Biology Graphical Notation -- Activity Flow (SBGN-AF). We thank Ralf K\"ohl, Olaf Wolkenhauer, Cl\'emence R\'eda, Shailendra Gupta, and Rahul Bordoloi for valuable discussions on the applications of causal loop diagrams to SBGN-AF.    We thank Oscar Cunningham, Morgan Rogers and Zoltan Kocsis for dispelling our hope that the first homology monoid of a directed graph is always free, and Tobias Fritz for explaining a generalization of homology for commutative monoids.  We thank David Egolf for suggesting the idea of a monoid containing an element indicating that one vertex has a direct effect on another that we are deciding to neglect.  We thank Evan Patterson and Xiaoyan Li for discussions of Catcolab's and AlgebraicJulia's treatments of these ideas, Kevin Carlson and others for general help with category theory, and the referees for other improvements of this paper.

\section{Set-labeled graphs}
\label{Sec:Set-labeled_graphs}

Although a graph captures the interconnection between a set of elements, it does not say anything about the \emph{types of interconnection}.  We can do this using labeled graphs with edges labeled by elements of a set.

 First, some preliminaries.   For us, a \define{graph} $G=(E, V, s, t)$ consists of a set of \define{vertices} $V$ and a set of \define{edges} $E$ together with a \define{source map} $s\maps E \to V$ and a \define{target map} $t \maps E \to V$. We define a \define{map of graphs} $f=(f_0,f_1)$ from a graph $G=(E,V,s,t)$ to a graph $G'=(E',V',s',t')$ to be a pair of functions $f_0 \maps V \to V'$ and $f_1 \maps E \to E'$ such that the following two diagrams commute:
\[
\begin{tikzcd}
E \arrow[r, "f_1"] \arrow[d, "s"'] & E' \arrow[d, "s'"] \\
V \arrow[r, "f_0"']                & V'               
\end{tikzcd}
\qquad
\begin{tikzcd}
E \arrow[r, "f_1"] \arrow[d, "t"'] & E' \arrow[d, "t'"] \\
V \arrow[r, "f_0"']                & V'.               
\end{tikzcd}
\]
The collection of graphs and maps between them form a category, which we denote by $\Gph$.  The category $\Gph$ is equivalent to the functor category $\Set^\G$, where $\G$ is the category freely generated by two parallel morphisms $e \rightrightarrows v$.   Since $\G \cong \G^\op$ we can also say $\Gph$ is equivalent to $\Set^{\G^\op}$.  Thus, $\Gph$ is a presheaf topos and it enjoys all the privileges of such categories \cite{MacLaneMoerdijk2012}; in particular it is complete and cocomplete.  When we replace the category $\Set$ with the category of finite sets $\Fin\Set$, we obtain a category
\[  \Fin\Gph \simeq \Fin\Set^\G \]
called the category of \define{finite graphs}, whose objects are graphs with finitely many vertices and edges.  This category is also a topos, so it has finite limits and colimits.

Now we turn to the simplest sort of labeled graph: a graph whose edges are labeled by some fixed set.

\begin{defn}\label{Defn:Set-labeled-graph}
Given a set $L$, we define a \define{$L$-labeled graph} $(G,\ell)$
to be a graph $G=(E,V, s,t)$ equipped with a \define{labeling map} $\ell \maps E \to L$.  We call $L$ the set of \define{labels}.
\end{defn}

A map of labeled graphs is a map between their underlying graphs that preserves the labels of edges:

 \begin{defn}
 \label{Defn:morphisms-of-set-labeled-graphs}
 Given $L$-labeled graphs $(G,\ell)$ and $(G',\ell')$, define a \define{map of $L$-labeled graphs} to be a map of graphs $f \maps G \to G'$ that makes the following triangle commute:
\[
\begin{tikzcd}[column sep=0.8em]
E \arrow[rr, "f_1"] \arrow[dr, "\ell"'] && E' \arrow[dl, "\ell'"] \\
& L              
\end{tikzcd}
\]
 \end{defn}

We can pull back an $L$-labeling along a map of graphs:

\begin{lem}
\label{Lem:Pullback_along_a_map_of_graphs}
Let $L$ be a set and $(G,\ell)$ an $L$-labeled graph.  For any map of graphs $f \maps G' \to G$, there is a unique $L$-labeling of $G'$, called the \define{pullback of $\ell$ along $f$} and denoted $f^\ast \ell$, such that $f$ is a map of $L$-labeled graphs from $(G',f^\ast \ell)$ to $(G,\ell)$.   The pullback is contravariantly functorial in the following sense:
\[     (f \circ g)^\ast = g^\ast \circ f^\ast, \qquad 1_\ast = 1 .\]
\end{lem}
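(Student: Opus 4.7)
The plan is to observe that the entire statement is essentially forced by \cref{Defn:morphisms-of-set-labeled-graphs}. Writing $f = (f_0, f_1)$ with $f_1 \maps E' \to E$ the edge component, the requirement that $f$ be a map of $L$-labeled graphs from $(G', \ell')$ to $(G, \ell)$ is precisely the commutativity of the triangle
\[
\ell' = \ell \circ f_1.
\]
This single equation both dictates the definition of the candidate labeling and enforces uniqueness in one stroke, since the right-hand side is determined by the data already in hand.

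First I would establish existence by \emph{defining} $f^\ast \ell := \ell \circ f_1 \maps E' \to L$. With this choice the triangle of \cref{Defn:morphisms-of-set-labeled-graphs} commutes tautologically, so $f$ becomes a map of $L$-labeled graphs from $(G', f^\ast \ell)$ to $(G, \ell)$. Uniqueness follows immediately: any other labeling $\ell'$ making $f$ into a map of $L$-labeled graphs must satisfy $\ell' = \ell \circ f_1$ on every edge of $G'$, and so coincide with $f^\ast \ell$.

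Next, for contravariant functoriality, given composable maps of graphs $g \maps G'' \to G'$ and $f \maps G' \to G$, the composite map has edge component $(f \circ g)_1 = f_1 \circ g_1$. Therefore
\[
(f \circ g)^\ast \ell \;=\; \ell \circ (f \circ g)_1 \;=\; \ell \circ f_1 \circ g_1 \;=\; (f^\ast \ell) \circ g_1 \;=\; g^\ast (f^\ast \ell),
\]
which gives the first identity; for the identity map $1 \maps G \to G$ we have $1^\ast \ell = \ell \circ \id_E = \ell$, giving the second.

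I do not anticipate any real obstacle: the statement reduces to the existence and uniqueness of a factorization through a commuting triangle, and functoriality reduces to associativity and unitality of function composition. The only mild point of care is to keep the two components of a map of graphs distinct, so that it is $f_1$ (and not $f_0$) that appears in the formula $f^\ast \ell = \ell \circ f_1$.
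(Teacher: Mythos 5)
Your proposal is correct and matches the paper's proof, which likewise defines $f^\ast \ell = \ell \circ f_1$ and notes that this choice is forced; you simply spell out the uniqueness and functoriality checks that the paper leaves implicit.
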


\begin{proof}
We are forced to take $f^\ast \ell = \ell \circ f_1$, and this works.
\end{proof}

To see how this fact can be used in system dynamics or systems biology, consider this map of graphs:
\[
\begin{tikzcd}[row sep = "0.1 em"]
\mathrm{egg \; sales} \arrow[dr, "", bend left =10]  \\
  &     \mathrm{profits}   &                  &    {} \arrow[r, "f"] & {} &                           & \mathrm{sales} \arrow[r, ""] & \mathrm{profits}.  \\
\mathrm{milk \; sales} \arrow[ur, "", bend right = 10]    
\end{tikzcd}
\]
We can think of the first graph as a more complicated model in which one vertex in the second
model has been differentiated into two.   Now suppose we label the edges of the second graph by elements of
$L = \{+,-\}$.   This graph has just one edge, and if increased sales leads to increased profits (all else being equal) we label this edge with $+$.   Then \cref{Lem:Pullback_along_a_map_of_graphs} says there is a unique way to label the edges of the first graph that lets us lift the map $f$ to a map of $L$-labeled graphs:
\[
\begin{tikzcd}[row sep = "0.1 em"]
\mathrm{egg \; sales} \arrow[dr, "+", pos = 0.3, bend left =10]  \\
  &     \mathrm{profits}   &                  &    {} \arrow[r, "f"] & {} &                           & \mathrm{sales} \arrow[r, "+"] & \mathrm{profits}.  \\
\mathrm{milk \; sales} \arrow[ur, "+", pos = 0.3, swap, bend right = 10]    
\end{tikzcd}
\]
The functoriality in \cref{Lem:Pullback_along_a_map_of_graphs} says that one can start with a model of a system as an $L$-labeled graph and successively refine it by choosing graphs that map to the original graph, with the $L$-labelings being automatically inherited by all these
 more refined graphs in a consistent way.  In system dynamics this and related processes of model refinement are called `stratification' \cite{BaezLiLibkindOsgoodRedekopp2023}.


In mathematical terms, there is a forgetful functor sending $L$-labeled graphs to their underlying graphs, and \cref{Lem:Pullback_along_a_map_of_graphs} says this functor is a `discrete fibration'.     For an introduction to discrete fibrations see \cite[Sec.\ 2.1]{LoregianRiehl}.    To better understand this and other facts about labeled graphs it can be useful to take a more sophisticated outlook.  

First, note that a $L$-labeled graph can be seen as a graph over $G_L$, the graph with only one vertex and an edge for each element $\ell \in L$.  That is, an $L$-labeled graph is the same as graph $G$ equipped with a map of graphs $p \maps G \to G_L$.  Similarly, a map of $L$-labeled graphs can be seen as a map of graphs over $L$, i.e., a map of graphs $f \maps G \to G'$ such that this triangle commutes:
 \[
\begin{tikzcd}[column sep=0.8em]
G \arrow[rr, "f"] \arrow[dr, "p"'] && G' \arrow[dl, "p'"] \\
& G_L             &     
\end{tikzcd} 
\]
Thus, the category of $L$-labeled graphs and maps between them is isomorphic to the slice category $\Gph/G_L$.  
This makes it easy to deduce a number of important facts from general principles. 

\begin{prop}
\label{Prop:Set-labeled_graph}
For any set $L$, we have the following:
\begin{itemize}
\item[(a)] The category $\Gph/G_L$ is a presheaf topos.
\item[(b)] The forgetful functor $U \maps \Gph/G_L \to \Gph$ is a discrete fibration.
\item[(c)] The presheaf  $\P \maps \Gph^\op \to \Set$ associated with $U$ is representable by $G_L$.
\item[(d)] The category $\Gph/G_L$ is equivalent to the category of elements $\int \P$. 
    \end{itemize}
\end{prop}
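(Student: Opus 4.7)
The approach is to derive all four claims from general principles about slice categories of presheaf toposes and the Grothendieck construction between discrete fibrations and presheaves, using \cref{Lem:Pullback_along_a_map_of_graphs} as the only non-formal ingredient. The key observation, already noted in the text, is that $\Gph/G_L$ is isomorphic to the category of $L$-labeled graphs and maps between them.

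For part (a), I would invoke the standard fact that for any object $F$ in a presheaf topos $\Set^{\mathsf{D}^{\op}}$, the slice $\Set^{\mathsf{D}^{\op}}/F$ is itself a presheaf topos, namely the one on the opposite of the category of elements of $F$. Applied to $\Gph \simeq \Set^{\G}$ and $F = G_L$, this shows $\Gph/G_L$ is a presheaf topos; explicitly, its indexing category has one object for the vertex of $G_L$, one object for each $\ell \in L$, and two parallel morphisms from each edge-object to the vertex-object.

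For part (b), I would verify the defining property of a discrete fibration directly: given $(G, \ell) \in \Gph/G_L$ and a map of graphs $f \maps G' \to G$, we need a unique object $(G', \ell') \in \Gph/G_L$ and a unique morphism from it to $(G, \ell)$ lying over $f$. This is precisely the content of \cref{Lem:Pullback_along_a_map_of_graphs}: $\ell' = f^\ast \ell$ is the unique labeling that makes $f$ into a map of $L$-labeled graphs.

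For parts (c) and (d), the presheaf $\P$ associated to the discrete fibration $U$ via the Grothendieck construction sends a graph $G$ to the fiber $U^{-1}(G)$, which is the set of $L$-labelings of $G$. Since an $L$-labeling of $G$ is the same datum as a map of graphs $G \to G_L$ (the single vertex of $G_L$ forces the vertex map), we obtain $\P(G) \cong \Hom_{\Gph}(G, G_L)$ naturally in $G$, proving (c). Part (d) then follows from the general Grothendieck construction, which equates discrete fibrations over $\Gph$ with presheaves on $\Gph$ and identifies the total category of the former with the category of elements $\int \P$ of the latter. The only real obstacle I anticipate is the bookkeeping around which direction to take opposites when passing to categories of elements; this can be checked against the explicit description of $L$-labeled graphs as presheaves on the two-object category from (a). Beyond this, the argument is essentially a direct application of standard topos-theoretic machinery.
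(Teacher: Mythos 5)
Your proposal is correct and follows essentially the same route as the paper: all four parts are derived from the standard facts that slices of presheaf toposes are presheaf toposes on the category of elements, that forgetful functors from slice categories are discrete fibrations (which is exactly the content of \cref{Lem:Pullback_along_a_map_of_graphs} in this case), that the fiber over $G$ is $\Hom_{\Gph}(G,G_L)$, and that the Grothendieck correspondence identifies the total category with $\int \P$. The only cosmetic difference is that the paper states these facts for a general presheaf $F$ on a general $\C$ before specializing, while you work directly with $G_L$; since $\G \cong \G^{\op}$, the variance bookkeeping you flag is harmless here.
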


\begin{proof}
These follow from more general results.  If we write $\widehat{\C}$ for the category of presheaves on a category $\C$, and choose a presheaf $F \in \widehat{C}$, then:
\begin{itemize}
\item[(a)] The category $\widehat{\C}/F$ is a presheaf topos.
\item[(b)] The forgetful functor $U \maps \widehat{\C}/F \to \widehat{\C}$ is a discrete fibration.
\item[(c)] The functor $\P \maps \widehat{\C}^\op \to \Set$ associated with $U$ is representable by $F$.
\item[(d)] The category $\widehat{\C}/F$ is equivalent to the category of elements $\int \P$. 
\end{itemize}
The proposition follows taking $\C = \G$ and $F = G_L$. These more general results are well-known, but here we sketch them, with references:

(a) follows from the fact $\widehat{\C}/F \simeq \widehat{\int F}$, where $\int F$ is the category of elements of $F$ \cite[Exercise III.8]{MacLaneMoerdijk2012}.
In our example of interest, one can check directly that the category of $L$-labeled graphs is equivalent to the category of presheaves on a category with one object $v$, one object for each edge label $\ell \in L$, and two morphisms $s_\ell, t_\ell \maps v \to \ell$ for each $\ell$.

(b) For any object $d$ in a category $\D$, to say that the forgetful functor $U \maps \D/d \to D$ is a discrete fibration means that given any object over $d$, say $p \maps e \to d$, and any morphism $f \maps e' \to e$ in $\D$, there exists a unique morphism $g$ in $\D/d$ such that $U(g) = f$.  This is easily checked taking $g$ to be the following morphism from  $e' \xrightarrow{p \circ f} d$  to $e \xrightarrow{p} d$:
 \[
\begin{tikzcd}[column sep=0.8em]
e' \arrow[rr, "f"] \arrow[dr, "p \circ f"'] && e \arrow[dl, "p"] \\
& d.            &     
\end{tikzcd} 
\]

(c) This holds because the fiber of $U$ at any $G \in \widehat{C}$ is of the form $\hom(G,F)$.

(d) From Grothendieck's correspondence between discrete fibrations and presheaves \cite[Sec.\ 2.1]{LoregianRiehl} we have the following isomorphism of discrete fibrations
 \[
\begin{tikzcd}[column sep=0.8em]
\textstyle{\int} \P  \arrow[rr,"\cong"] \arrow[dr, ""'] && \widehat{\C}/F \arrow[dl, ""] \\
& \widehat{\C}            &     
\end{tikzcd} 
\]
and thus the category $\int \P$ is equivalent to $\widehat{\C}/F$.   \end{proof}

The discrete fibration in Proposition \ref{Prop:Set-labeled_graph} (b) implies that there is a functorial way to pull back $L$-labelings along maps of graphs, as we have already seen.  We can also push forward labelings along a map between labeling sets.  That is, given a map $\phi \maps L \to L'$, and an $L$-labeled graph $(G,\ell)$, we can define an $L'$-labeled graph 
\[         \phi_\ast(G,\ell) := (G, \phi \circ \ell) .\]
This process is functorial:
\[        (\phi \circ \psi)_\ast = \phi_\ast \circ \psi_\ast , \qquad 1_\ast = 1 .\]
More formally, we have the following result.

\begin{prop}
\label{Prop:Change_of_labeling_set}
The following assignment
\[
\begin{array}{cccl}
F \maps &  \Set &\to& \cat \\ \\
& L &\mapsto& \Gph/G_L\\
&(L \xrightarrow{\phi} L') &\mapsto& (\Gph/G_L \xrightarrow{\phi_{*}} \Gph/G_{L'})
\end{array}
\]
defines a functor.
\end{prop}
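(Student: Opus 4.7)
The plan is to reduce the statement to a standard fact about slice categories: for any category $\D$ and any morphism $h \maps d \to d'$ in $\D$, post-composition with $h$ defines a functor $h_\ast \maps \D/d \to \D/d'$, and this assignment is itself functorial in $h$.

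First, I would observe that the construction $L \mapsto G_L$ extends to a functor $G_{(-)} \maps \Set \to \Gph$. Explicitly, given $\phi \maps L \to L'$, we define a map of graphs $\tilde\phi \maps G_L \to G_{L'}$ by taking the unique vertex of $G_L$ to the unique vertex of $G_{L'}$, and by acting as $\phi$ on edges (this is well-defined because the edges of $G_L$ are indexed by $L$ and those of $G_{L'}$ by $L'$, all sharing the single vertex as both source and target, so the source/target conditions for being a map of graphs are automatic). Functoriality of $G_{(-)}$ is immediate: $\widetilde{\phi \circ \psi} = \tilde\phi \circ \tilde\psi$ and $\widetilde{1_L} = 1_{G_L}$.

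Next, I would identify $\phi_\ast$ with the post-composition functor along $\tilde\phi$. Under the identification of an $L$-labeled graph $(G,\ell)$ with a map $p \maps G \to G_L$, the assignment $(G,\ell) \mapsto (G, \phi \circ \ell)$ corresponds precisely to $p \mapsto \tilde\phi \circ p$, and on morphisms $\phi_\ast$ simply sends a map of $L$-labeled graphs $f \maps G \to G'$ to itself, now viewed as lying over $G_{L'}$ via $\tilde\phi \circ p'$. One must check that this really is a morphism in $\Gph/G_{L'}$, which amounts to the commutativity of
\[
\begin{tikzcd}[column sep=1em]
G \arrow[rr,"f"] \arrow[dr,"p"'] && G' \arrow[dl,"p'"] \\
& G_L \arrow[d,"\tilde\phi"] & \\
& G_{L'} &
\end{tikzcd}
\]
which follows from the commutativity of the upper triangle.

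Finally, functoriality of $F$ follows from the functoriality of $G_{(-)}$ together with the fact that post-composition respects composition and identities: $(\tilde\phi \circ \tilde\psi) \circ p = \tilde\phi \circ (\tilde\psi \circ p)$ and $1_{G_L} \circ p = p$. Thus $(\phi \circ \psi)_\ast = \phi_\ast \circ \psi_\ast$ and $(1_L)_\ast = 1_{\Gph/G_L}$.

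There is no real obstacle here; the only mildly nontrivial bookkeeping is the verification that $\tilde\phi$ is a well-defined map of graphs, but this is immediate from the structure of $G_L$ having a single vertex.
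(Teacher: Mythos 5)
Your proof is correct and is essentially the argument the paper has in mind: the paper states the result without a formal proof, having just defined $\phi_\ast(G,\ell) = (G,\phi\circ\ell)$ and noted that functoriality follows from associativity of composition, and it adopts exactly your slice-category viewpoint (the map $\tilde\phi \maps G_L \to G_{L'}$ appears explicitly in the subsequent description of $\int F$). Your identification of $\phi_\ast$ with post-composition along $\tilde\phi$ is a clean way to package the same verification.
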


Proposition \ref{Prop:Change_of_labeling_set} lets us construct a single \define{category of set-labeled graphs} that allows for all possible choices of label set $L$.   The method for doing this is called the Grothendieck construction $\int F$ of the functor $F$, a generalization of the category of elements \cite[Chap.\ 10]{JohnsonYau2021}.  Explicitly:

\begin{prop}
\label{Prop:Category_of_set-labeled_graphs}
$\int F$ is the category where:
\begin{itemize}
\item an object is a pair $(L, p \maps G \to G_L)$,
\item a morphism from $(L, p \maps G \to G_L)$ to $(L', p' \maps G' \to G_{L'})$ is a pair $(\phi,f)$, where $\phi \maps L \to L'$ is a function and $f \maps G \to G'$ is a map of graphs such that this square commutes:
\[
\begin{tikzcd}
G \arrow[r, "f"] \arrow[d, "p"'] & G' \arrow[d, "p'"] \\
G_L \arrow[r, "\tilde{\phi}"']                & G_{L'}             
\end{tikzcd}
\]
where $\tilde{\phi} \maps G_L \to G_{L'}$ maps each edge $\ell \in L$ of $G_L$ to the edge $\phi(\ell) \in L'$ of $G_{L'}$.
\end{itemize}
\end{prop}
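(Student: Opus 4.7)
My plan is to unwind the standard Grothendieck-type construction of the category of elements for the functor $F \maps \Set \to \cat$ of \cref{Prop:Change_of_labeling_set} and check that it matches the explicit description. By definition, $\int F$ has as objects pairs $(L, X)$ with $L$ a set and $X$ an object of $F(L) = \Gph/G_L$; and as morphisms from $(L, X)$ to $(L', X')$ pairs $(\phi, f)$ where $\phi \maps L \to L'$ is a function and $f \maps F(\phi)(X) \to X'$ is a morphism in $\Gph/G_{L'}$. The task is to translate this data into the form in the statement.

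The object description is immediate, since an object of $\Gph/G_L$ is by definition a graph $G$ equipped with a map of graphs $p \maps G \to G_L$. The only real work is in unpacking morphisms. First I would describe $F(\phi)(p \maps G \to G_L)$ explicitly. From \cref{Prop:Change_of_labeling_set}, $F(\phi) = \phi_*$ sends an $L$-labeled graph $(G,\ell)$ to $(G, \phi \circ \ell)$. Under the identification used in \cref{Prop:Set-labeled_graph}(c) between $L$-labelings $\ell \maps E \to L$ on $G$ and graph maps $G \to G_L$, applying $\phi$ to labels corresponds exactly to post-composition with the graph map $\tilde{\phi} \maps G_L \to G_{L'}$ that is forced on vertices and sends each edge $\ell$ to $\phi(\ell)$ on edges. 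Hence $\phi_*(p) = \tilde{\phi} \circ p$, and a morphism $\phi_*(p) \to p'$ in $\Gph/G_{L'}$ is exactly a map of graphs $f \maps G \to G'$ satisfying $p' \circ f = \tilde{\phi} \circ p$, which is the commutative square in the statement.

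Finally I would check that composition and identities in $\int F$ agree with the evident composition of pairs $(\phi, f)$: this is a direct consequence of the functoriality $(\psi \circ \phi)_* = \psi_* \circ \phi_*$ and $1_* = 1$ asserted in \cref{Prop:Change_of_labeling_set}, combined with composition in the slice categories $\Gph/G_{L'}$. The main obstacle, such as it is, is purely notational: one must keep straight the two equivalent descriptions of $L$-labeled graphs (labelings $\ell \maps E \to L$ versus maps $G \to G_L$) and be careful that $\phi_*$, described originally via post-composition on labels, corresponds under this equivalence to post-composition with $\tilde{\phi}$ on graph maps. Once this dictionary is set up, the verification is routine.
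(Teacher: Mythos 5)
Your proposal is correct and follows exactly the route the paper intends: the paper states this proposition without proof, presenting it as a direct unpacking of the category of elements of the functor $F$, and your argument supplies precisely those routine details (identifying $\phi_\ast(p)$ with $\tilde{\phi}\circ p$ under the dictionary between labelings $\ell \maps E \to L$ and graph maps $G \to G_L$, and reading off the commuting square from the slice-category morphism). Nothing is missing.
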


\section{Monoid-labeled graphs}
\label{Sec:Monoid-labeled_graphs}

We now turn to graphs where the edges are labeled by elements of a monoid.  We think of these monoid elements as `polarities': ways for the entity corresponding to one vertex to affect another.   If a graph has an edge from a vertex $u$ to a vertex $v$ labeled with a monoid element $m$, and an edge from $v$ to a vertex $w$ labeled with a monoid element $m'$, we say that $u$ has an indirect effect on $w$ equal to $m' m$.  

Here are some monoids that are useful for describing polarities:

\begin{eg}
\label{Eg:1}
The terminal monoid is a monoid containing just one element.  This is also known as the trivial group.  In the applications at hand we write the group operation as multiplication and call the one element $1$, so that $1 \cdot 1 = 1$.   Thus, we call this monoid $\{1\}$.   Any graph becomes a $\{1\}$-graph in a unique way, by labeling each edge with $1$, and this gives an isomorphism of categories
\[            \{1\} \Gph \cong \Gph .\]
We can use a graph to describe causality in at least two distinct ways.  Suppose $v$ and $w$ are vertices of a graph.
\begin{enumerate}
\item We can use the presence of an edge from $v$ to $w$ to indicate that the entity named by $v$ has a \emph{direct effect} on the entity named by $w$, and the absence of an edge to indicate that $v$ has \emph{no direct effect} on $w$.   (Even if there is no edge from $v$ to $w$, $v$ may still have an indirect effect on $w$ if there is path of edges from $v$ to $w$.)
\item We can use the presence of an edge from $v$ to $w$ to indicate that the entity named by $v$ has a \emph{direct effect} on the entity named by $w$,
 and the absence of an edge to indicate that $v$ has \emph{has no currently known direct effect} on $w$.  This interpretation is useful for situations where we start with a graph having no edges, and add an edge each time we discover that one vertex has a direct effect on another.
\end{enumerate}
We can also take at least three different attitudes to the presence of multiple edges from one vertex to another:
\begin{itemize}
\item[(a)] We can treat them as redundant, hence unnecessary, allowing us to simplify any graph so that it has at most one edge from one vertex to another.
\item[(b)]  We can treat them as indicating different ways in which one vertex directly affects another.   
\item[(c)] We can use the number of edges from $v$ to $w$ to indicate the amount by which $v$ affects $w$.  
\end{itemize}
All these subtleties of interpretation can also arise for $M$-graphs where $M$ is any other monoid.  We will not mention them each time, but in applications it can be important to clearly fix an interpretation.
\end{eg}

\begin{eg}
\label{Eg:Z/2}
The most widely used monoid in this field is the abelian group $\Z/2$.   It is typical to write the group operation as multiplication rather than addition, and call this group $\{+,-\}$:
\vskip 0.5em
\renewcommand\arraystretch{1.3}
\setlength\doublerulesep{0pt}
\begin{center}
\begin{tabular}{c||c|c}
$\cdot$ & $+$ & $-$ \\
\hline\hline
$+$ & $+$ & $-$ \\ 
\hline
$-$ & $-$ & $+$ \\ 
\end{tabular}
\end{center}
\vskip 0.5em
In the field called System Dynamics \cite[Chap.\ 5]{Sterman2002}, a graph with edges labeled by elements of $\{+,-\}$ is called a `causal loop diagram'.  Here is a causal loop diagram serving as a simple model of students doing homework:
\[
\begin{tikzcd}
& \mathrm{sleep} \arrow[d, "+", bend left] \\
\mathrm{effort} \arrow[ur, "-", bend left] \arrow[r, "+"] & \mathrm{quality \; of \; work} \arrow[d, "+", bend left] &                        \\
& \mathrm{grades} \arrow[lu, "-", bend left]
\end{tikzcd}
\]
The effort a student makes has a direct positive effect on the quality of their work, so there is an edge from `effort' to `quality of work' labeled with a $+$.     However, effort also has a direct negative effect on the student's sleep, and sleep has a direct positive effect on quality of their work.   Thus, although the student's effort has a direct positive effect on the quality of their work, it also has an indirect negative effect, which arises from multiplying the negative effect of effort on sleep with the positive effect of sleep on the quality of work. 

In biology, the simplest sort of regulatory network \cite{Aduddell} is a graph with edges labeled by elements of $\{+,-\}$.  However, the notation is different.  Instead of an edge labeled with $+$, an arrow  
$A$\raisebox{0.1 em}{
\begin{tikzpicture}[scale=0.2, line width=0.8pt]
  \draw (-2,0) -- (1,0);
  \draw[fill=white] (0.4,0.3) -- (1.2,0) -- (0.4,-0.3) -- cycle;
\end{tikzpicture} 
}$B$
indicates that $A$  `promotes' or `stimulates' $B$, while an arrow
$A$\raisebox{0.1 em}{
\begin{tikzpicture}[scale=0.2, line width=0.8pt]
  \draw (-2,0) -- (0.8,0);
  \draw (0.70,-0.4) -- (0.70,0.4);
\end{tikzpicture} 
}$B$ indicates that $A$ `represses' or `inhibits' $B$.   In the official SBGN-AF terminology \cite{mi_systems_2015},
$A$\raisebox{0.1 em}{
\begin{tikzpicture}[scale=0.2, line width=0.8pt]
  \draw (-2,0) -- (1,0);
  \draw[fill=white] (0.4,0.3) -- (1.2,0) -- (0.4,-0.3) -- cycle;
\end{tikzpicture} 
}$B$ means that $A$ is a `positive influence' on $B$, while 
$A$\raisebox{0.1 em}{
\begin{tikzpicture}[scale=0.2, line width=0.8pt]
  \draw (-2,0) -- (0.8,0);
  \draw (0.70,-0.4) -- (0.70,0.4);
\end{tikzpicture} 
}$B$ means that $A$ has a `negative influence' on $B$.
\end{eg}

\begin{eg}
\label{Eg:Z/3}
Another important monoid of polarities is the multiplicative monoid of $\Z/3$, which we can write either as $\{1,0,-1\}$ or $\{+,0,-\}$.   Now the multiplication table is this:
\vskip 0.5em
\renewcommand\arraystretch{1.3}
\setlength\doublerulesep{0pt}
\begin{center}
\begin{tabular}{c||c|c|c}
$\cdot$ & $+$ & $0$ & $-$ \\
\hline\hline
$+$ & $+$ & $0$ & $-$ \\ 
\hline
$0$ & $0$ & $0$ & $0$ \\ 
\hline
$-$ & $-$  & $0$ & $+$ \\ 
\end{tabular}
\end{center}
\vskip 0.5em
 This monoid contains the one in the previous example as a submonoid, but also a new element $0$ that is \define{absorbing}: $0 \cdot x = x \cdot 0 = 0$ for all $x$.   So, unlike $\{+,-\}$, this monoid $\{+,0,-\}$ is not a group.

There are at least three distinct interpretations of $\{+,0,-\}$-graphs:

\begin{enumerate}
\item
We can use $+$ to denote a positive effect, $-$ to denote a negative effect, and $0$ to denote \emph{an effect whose value varies depending on the circumstances}.    
\item 
We can use $+$ to denote a positive effect, $-$ to denote a negative effect, and $0$ to denote \emph{an effect that we deem negligible}.
\item 
We can use $+$ to denote a positive effect, $-$ to denote a negative effect, and $0$ to denote \emph{an unknown effect}.  
\end{enumerate}
Our choice of interpretation affects how we interpret the absence of an edge from one vertex to another.  We discuss this in \cref{Eg:absorbing_element}.

SBGN-AF diagrams can be interpreted as monoid-labeled graphs.  In this formalization, this new element $0$ can be used to mean an `unknown influence', and denoted with a diamond-headed arrow.   
Thus,  
$A$ \raisebox{0.1 em}{
\begin{tikzpicture}[scale=0.2, line width=0.8pt]
  \draw (-2,0) -- (1,0);
  \draw[fill=white] (0.4,0) -- (0.8,0.4) -- (1.2,-0) -- (0.8,-0.4) -- cycle;
\end{tikzpicture}  
}$B$ means that $A$ has an unknown influence on $B$.

\end{eg}

\begin{eg}
\label{Eg:absorbing_element}
More generally, for any monoid $M$ we can form a new monoid $M \cup \{0\}$ where $0$ is a new absorbing element.   Thus, in $M \cup \{0\}$ the product of elements of $M$ is defined as before, but $m \cdot 0 = 0 \cdot m = 0$ for all $m \in M$, and $0 \cdot 0 = 0$.   This new monoid contains $M$ as a submonoid.   As before, there are at least three interpretations of the element $0$:
\begin{enumerate}
\item
We can use $0$ to denote an effect whose value varies depending on the circumstances.   In this case we can use the absence of an edge from a vertex $v$ to a vertex $w$ to indicate either:
\begin{itemize}
\item[(a)] there is \emph{no direct effect} of $v$ on $w$, or
\item[(b)] the effect of $v$ on $w$ is \emph{unknown}.
\end{itemize}
\item
We can use $0$ to denote an effect that we deem negligible.   In this case we can again use either interpretation (a) or (b) of the absence of an edge.
\item 
We can use $0$ to denote an unknown effect.   In this case we can use interpretation (a) of the absence of an edge, but interpretation (b) is awkward, because then the absence of an edge means the same thing as an edge labeled by $0$.
\end{enumerate}
\end{eg}

\begin{eg}
\label{Eg:necessary_stimulus}
For any monoid $M$ we can also form a new monoid $M \cup \{I\}$ where $I$ is a new identity element.   Thus, in $M \cup \{I\}$ the product of $M$ is defined as before, but $m \cdot I = I \cdot m = m$ for all $m \in M$, and $I \cdot I = I$.   When we apply this construction to $\{+,-\}$ we obtain a monoid $\{+,I,-\}$ with the following multiplication table:
\vskip 0.5em
\renewcommand\arraystretch{1.3}
\setlength\doublerulesep{0pt}
\begin{center}
\begin{tabular}{c||c|c|c}
$\cdot$ & $I$ &  $+$ & $-$ \\
\hline\hline
$I$ & $I$ & $+$ & $-$ \\ 
\hline
$+$ &$+$ & $+$ & $-$ \\ 
\hline
$-$ & $-$  & $-$ & $+$ \\ 
\end{tabular}
\end{center}
\vskip 0.5em
We can use an edge labeled by $I$ from $v$ to $w$ to indicate that $v$ is \emph{necessary} for $w$.    In SBGN-AF diagrams this relationship is called `necessary stimulation'.

If we adjoin an absorbing element to the monoid $\{I,+,-\}$, we obtain a  monoid $\{I,+,0,-\}$ that can handle a large fraction of SBGN-AF diagrams.    But in these diagrams:
\begin{itemize}
\item instead of an edge labeled $I$, we write
\raisebox{0.1 em}{
\begin{tikzpicture}[scale=0.2, line width=0.8pt]
  \draw (-2,0) -- (1,0);
  \draw (0.01,0) -- (0.399,0);  
 \draw (0.1,-0.3) -- (0.1,0.3);
  \draw[fill=white] (0.4,0.3) -- (1.2,0) -- (0.4,-0.3) -- cycle;
\end{tikzpicture} 
}
for `necessary stimulation'.
\item instead of an edge labeled $+$, we write
\raisebox{0.1 em}{
\begin{tikzpicture}[scale=0.2, line width=0.8pt]
  \draw (-2,0) -- (1,0);
  \draw[fill=white] (0.4,0.3) -- (1.2,0) -- (0.4,-0.3) -- cycle;
\end{tikzpicture} 
}
for `positive influence'.
\item Instead of an edge labeled $0$, we write
\raisebox{0.1 em}{
\begin{tikzpicture}[scale=0.2, line width=0.8pt]
  \draw (-2,0) -- (1,0);
  \draw[fill=white] (0.4,0) -- (0.8,0.4) -- (1.2,-0) -- (0.8,-0.4) -- cycle;
\end{tikzpicture} 
for `unknown influence'.
}
\item Instead of an edge labeled $-$, we write
\raisebox{0.1 em}{
\begin{tikzpicture}[scale=0.2, line width=0.8pt]
  \draw (-2,0) -- (0.8,0);
  \draw (0.70,-0.5) -- (0.70,0.5);
\end{tikzpicture} 
}
for `negative influence'.
\end{itemize}
We can also obtain this monoid $\{I,+,0,-\}$ by adjoining a new identity element to the monoid $\{+,0,-\}$ discussed in \cref{Eg:Z/3}.   Either way, the multiplication table is as follows:
\vskip 0.5em
\renewcommand\arraystretch{1.3}
\setlength\doublerulesep{0pt}
\begin{center}
\begin{tabular}{c||c|c|c|c}
$\cdot$ & $I$ &  $+$ & $0$ & $-$ \\
\hline\hline
$I$ & $I$ & $+$ & $0$& $-$ \\ 
\hline
$+$ &$+$ & $+$ & $0$ & $-$ \\ 
\hline
$0$ & $0$  & $0$& $0$ & $0$ \\ 
\hline
$-$ & $-$  & $-$& $0$ & $+$ \\ 
\end{tabular}
\end{center}
\vskip 0.5em
However, it is worth mentioning that our description of SBGN-AF diagrams as monoid-labeled graphs is far from complete. For instance, we have not addressed important concepts such as logic arcs and equivalence arcs that are present in the official SBGN-AF terminology.
\end{eg}

\begin{eg}
\label{Eg:multiplicative_group_of_reals}
The multiplicative group of the reals, $(\R - \{0\}, \cdot, 1)$, consists of all real numbers except zero, with multiplication as its monoid operation.  The monoid $\{+,-\}$ of \cref{Eg:Z/2} gives purely qualitative information about whether an effect is positive or negative.  We can label the edges of a graph with elements of $\R - \{0\}$ to give quantitative information about \emph{how much} of a positive or negative direct effect one vertex has on another. 
\end{eg}

\begin{eg}
\label{Eg:multiplicative_monoid_of_reals}
The multiplicative monoid of the reals, $(\R, \cdot, 1)$, consists of all real numbers with multiplication as its monoid operation.  This is obtained from the multiplicative group of the reals by adjoining an absorbing element as in \cref{Eg:absorbing_element}.
\end{eg}

\begin{eg} 
The monoid $([0,\infty), +, 0)$ consists of nonnegative
real numbers with addition as its monoid operation.   We can use this monoid to describe delayed effects: an edge labeled with a real number $t \ge 0$ can indicate that one vertex affects another after a delay of time $t$. 
\end{eg}

\begin{eg} 
The monoid $(\N, +, 0)$ consists of all natural numbers with addition as its monoid operation.   We can use this monoid to describe delayed effects in discrete-time systems, using an edge labeled with a natural number $n$ to indicate that one vertex affects another after a delay of $n$ time steps.   
\end{eg}

\begin{eg}
\label{Eg:products_of_monoids}
Products of monoids are also useful: for example, to describe both time delays and whether an effect is positive or negative, we can use the monoid $([0,\infty), +, 0) \times \{+, -\}$. 
\end{eg}

\begin{eg}
\label{Eg:semiautomata}
All the above monoids above are commutative, and indeed commutative monoids are by far the most commonly used for polarities.  We discuss special features of the commutative case in Sections \ref{Sec:Commutative_monoid-labeled_graphs} and \ref{Sec:Feedback_loops}.  However, graphs with edges labeled by not-necessarily-commutative monoids do show up naturally in some contexts.  For example, in computer science \cite[Sec.\ 2.1]{Ginzburg1968}, a \define{semiautomaton} consists of a set $V$ of \define{states}, a set $A$ of \define{inputs}, and a map $\alpha \maps A \to V^V$ that describes how each input acts on each state to give a new state.  Let $M$ be the monoid of maps from $V$ to itself generated by all the maps $\alpha(a)$ for $a \in A$.   Let $G$ be the graph where:
\begin{itemize}
\item The set of vertices is $V$.
\item The set of edges is $E = A \times V$.
\item The source map is given by
\[  \begin{array}{rccl}
 s \maps & E & \to & V \\
       & (a,v) & \mapsto & v .
\end{array}
\]
\item The target map is given by
\[  \begin{array}{rccl}
 t \maps & E & \to & V \\
      &  (a,v) & \mapsto & \alpha(a)(v).
\end{array}
\]
Since the monoid of maps $M \subseteq V^V$ is generated by elements $\alpha(a)$ for $a \in A$, there is an $M$-labeling of $G$ given by
\[    \ell(a,v) = \alpha(a) .\]
In short, a semi-automaton gives an monoid-labeled graph where the vertices represent states and for each input $a$ mapping a state $v$ to a state $w$ there is an edge labeled by the monoid element $\alpha(a) \in M$.  Note however that it also gives a set-labeled graph with the same vertices, where for each input $a$ mapping a state $v$ to a state $w$ there is an edge labeled by $a \in A$. 
\end{itemize}
\end{eg}

Given a monoid $(M, \cdot, 1)$, we define an $M$-labeled graph and a map of $M$-labeled graphs just as when $M$ is a mere set (see Definitions \ref{Defn:Set-labeled-graph} and \ref{Defn:morphisms-of-set-labeled-graphs}).  One advantage of using graphs with labelings $\ell \maps E \to M$ where $M$ is a monoid is that for any path of edges in $G$
\[   v_0 \xrightarrow{e_1} v_1 \xrightarrow{e_2} \cdots \xrightarrow{e_{m-1}} v_{m-1} \xrightarrow{e_m} v_m \]
we can form the product $\ell(e_m) \cdots \ell(e_1)$, and use this to describe how the vertex $v_0$ \emph{indirectly} affects the vertex $v_m$.

To formalize this, recall that there are adjoint functors
\[
\begin{tikzcd}
            \  \Gph \arrow[r, bend left = 20, shift left=1ex, "{\Free}"{name=G}] & \ \cat \arrow[l, bend left = 20, shift left=.5ex, pos = 0.5, "\Und"{name=F}]
            \arrow[phantom, from=F, to=G, "\scriptstyle\boldsymbol{\bot}"].
        \end{tikzcd}
    \]
The functor $\Und \maps \cat \to \Gph$ sends any category $\C$ to its underlying graph, whose vertices are objects of $\C$ and whose edges are morphisms.  This has a left adjoint $\Free \maps \Gph \to \cat$
sending each graph $G = (E,V,s,t)$ to the category $\Free(G)$ where:
\begin{itemize}
\item an object is a vertex $v \in V$.
\item a morphism from $v$ to $w$ is a \define{path} in $G$:
\[   v = v_0 \xrightarrow{e_1} v_1 \xrightarrow{e_2} \cdots \xrightarrow{e_{m-1}} v_{m-1} \xrightarrow{e_m} v_m = w \]
where $m \ge 0$.
\item composing paths
\[   v_0 \xrightarrow{e_1}  \cdots   \xrightarrow{e_m} v_m \]
and 
\[   v_m \xrightarrow{e_{m+1}} \cdots \xrightarrow{e_{m+n}} v_{m+n} \]
gives the path
\[  v_0 \xrightarrow{e_1}  \cdots   \xrightarrow{e_m} v_m \xrightarrow{e_{m+1}} \cdots \xrightarrow{e_{m+n}} v_{m+n} \]
\item the identity $1 \maps v \to v$ is the path of length $0$ starting and ending at $v$.
\end{itemize}

Suppose $M$ is a monoid.  Our next goal is to show that when $(G,\ell)$ is an $M$-labeled graph, $\Free(G)$ becomes a kind of $M$-labeled category---usually called an `$M$-graded' category.   Just as $M$-labeled graphs are conveniently treated as graphs over $G_M$, we can define an $M$-graded category to be a category $\C$ over $\B M$, the one-object category with one morphism for each element of $M$, with composition defined to be multiplication in $M$.  

\begin{defn}
\label{Def:Monoid-labeled_category}
Let $M$ be a monoid.  The \define{category of} $M$-\define{graded categories} is the slice category $\cat/\B M$.
\end{defn}

Thus, an $M$-graded category $p \maps \C \to \B M$ is a category $\C$ for which each morphism is mapped to an element $p(f)$ of $M$ called its \define{grade} in such a way that 
\[    p(f g) = p(f) p(g)  \]
for any pair of composable morphisms $f$ and $g$, and
\[    p(1_c) = 1 \in M \]
for each $c \in \C$.  A map of $M$-graded categories, say $F$ from $p \maps \C \to \B M$ to $p' \maps \C' \to \B M'$, works out to be a functor $F \maps \C \to \C'$ with the property that
\[    p'(F(f)) = p(f)   \]
for every morphism in $\C$.  In other words, $F$ preserves the grades of morphisms.

This category of monoid-graded categories enjoys certain nice properties analogous to the category of set-labeled graphs (see \cref{Prop:Set-labeled_graph}).

\begin{prop}
\label{Prop:Monoid-labeled_category}
For any monoid $M$, we have the following:
\begin{itemize}
\item[(a)] The category $\cat/\B M$ is locally finitely presentable, and thus complete and cocomplete.
\item[(b)] The forgetful functor $\bar{U} \maps \cat /\B M \to \cat$ is a discrete fibration.
\item[(c)] The presheaf  $\bar{\P} \maps \cat^\op \to \Set$ associated with $\F$ is representable by $\B M$.
\item[(d)] The category $\cat/\B M$ is same as the category of elements $\int \bar{\P}$. 
    \end{itemize}
\end{prop}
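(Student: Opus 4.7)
My plan is to closely mirror the proof of \cref{Prop:Set-labeled_graph}, since the four statements are structurally identical once one replaces the presheaf topos $\widehat{\C}$ with $\cat$ and takes $\B M$ as the distinguished object. The key difference is that $\cat$ is not a presheaf topos, so I cannot invoke the fact that slices of presheaf toposes are presheaf toposes; instead I will rely on the fact that $\cat$ is locally finitely presentable. So the only real adjustment occurs in part (a); parts (b), (c), (d) go through by the general principles already used in the set-labeled case.

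For (a), I would cite the fact that $\cat$ is locally finitely presentable (with finitely presentable objects being categories with finitely many objects and morphisms), together with the general theorem that for any object $d$ of a locally finitely presentable category $\D$, the slice $\D/d$ is again locally finitely presentable. Applying this with $\D = \cat$ and $d = \B M$ yields (a); completeness and cocompleteness then come for free, since every locally finitely presentable category has all small limits and colimits. For (b), I would give the standard argument that for any category $\D$ and object $d \in \D$, the forgetful functor $\D/d \to \D$ is a discrete fibration: given a morphism $F \maps \C' \to \C$ in $\cat$ and an object $p \maps \C \to \B M$ of $\cat/\B M$ lying over $\C$, the unique lift is the morphism from $p \circ F \maps \C' \to \B M$ to $p \maps \C \to \B M$ represented by $F$ itself, as displayed in the triangle
\[
\begin{tikzcd}[column sep=0.8em]
\C' \arrow[rr, "F"] \arrow[dr, "p \circ F"'] && \C \arrow[dl, "p"] \\
& \B M.            &
\end{tikzcd}
\]

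For (c), note that the fiber of $\bar U$ over a small category $\C$ is precisely the set of $M$-gradings on $\C$, which is $\Hom_{\cat}(\C, \B M)$; so the presheaf on $\cat$ classifying the discrete fibration $\bar U$ is $\Hom_{\cat}(-, \B M)$, represented by $\B M$. For (d), I would apply Grothendieck's equivalence between discrete fibrations over $\cat$ and presheaves on $\cat$ (as in \cite[Sec.\ 2.1]{LoregianRiehl}): since $\bar U$ is the discrete fibration corresponding to $\bar\P$ by (b) and (c), and the category of elements $\int \bar\P$ is by definition the discrete fibration of $\bar\P$, we obtain an isomorphism of discrete fibrations over $\cat$ and hence $\cat/\B M \cong \int \bar\P$.

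The main obstacle, and the only place any actual work is needed beyond quoting the set-labeled argument, is (a): specifically, the step that slicing $\cat$ over $\B M$ preserves local finite presentability. This is standard but worth a careful citation, since unlike in \cref{Prop:Set-labeled_graph} we can no longer say that $\cat/\B M$ is itself a presheaf topos (indeed an $M$-graded category is not in general a presheaf on a small category unless $M$ is a group or otherwise special). Everything else is a transcription, with $\cat$ in place of $\widehat{\G}$, of the proof already given in the set-labeled case.
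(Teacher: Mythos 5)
Your argument is correct, and for parts (b)--(d) it coincides with the paper's proof, which simply says these parts follow exactly as in the corresponding parts of \cref{Prop:Set-labeled_graph}; your transcription of those arguments (the unique lift $p \circ F$, the fiber $\Hom_{\cat}(\C,\B M)$, and Grothendieck's correspondence) is exactly what is intended. The one genuine divergence is in part (a), which you correctly identify as the only place real work is needed. You handle it by citing that $\cat$ is locally finitely presentable and that slices of locally finitely presentable categories are again locally finitely presentable (Ad\'amek--Rosick\'y); the paper instead avoids the slice theorem and exhibits $\cat/\B M$ directly as the category of models of a finite limits sketch, with a set $\Ob$ of objects, for each $\ell \in M$ a set $\Mor_\ell$ of morphisms of grade $\ell$ equipped with source and target maps to $\Ob$, an identity-assigning map $\Ob \to \Mor_1$, and composition maps $\Mor_\ell \times_{\Ob} \Mor_{\ell'} \to \Mor_{\ell'\ell}$ defined on pullbacks, subject to associativity and unit laws. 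Your route is shorter and rests on a standard citation; the paper's route is more self-contained and has the side benefit of an explicit algebraic presentation of $M$-graded categories, parallel to the explicit presheaf description of $L$-labeled graphs in the proof of \cref{Prop:Set-labeled_graph}(a). (You also silently and correctly read the statement of (c) as referring to the presheaf associated to $\bar U$; the ``$\F$'' there is a typo.)
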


\begin{proof}
For (a), recall that a category is locally finitely presentable precisely when it is the category of models for a finite limit sketch
 \cite{AdamekRosicky}.  In the case of $\cat/\B M$, this follows because we can define a category over $\B M$ as having a set $\Ob$ of objects, for each $\ell \in M$ a set $\Mor_\ell$ of $\ell$-labeled morphisms, source and target maps $s_\ell, t_\ell \maps \Mor_\ell \to \Ob,$
an identity-assigning map $\id \maps \Ob \to \Mor_1$, and composition maps defined on pullbacks
\[   \circ_{\ell, \ell'} \maps \Mor_\ell \, {}_{s_\ell}\!\times_{t_\ell} \, \Mor_{\ell'} \to \Mor_{\ell' \ell} \]
satisfying associativity and the unit laws.  Every locally finitely presentable category is complete and cocomplete \cite{AdamekRosicky}.

Parts (b), (c) and (d) follow exactly the same way as in the proofs of the corresponding parts of Proposition \ref{Prop:Set-labeled_graph}.
\end{proof} 


\section{Motifs in monoid-labeled graphs}
\label{Sec:Motifs}

Small monoid-labeled graphs are often called `motifs' because, like motifs in a piece of music, they show up repeatedly in a meaningful way in a larger context \cite{Alon2007}.  For example take $M = \{+,-\}$.  Let $(G,\ell)$ be this $M$-labeled graph:
\[
\begin{tikzpicture}
    \node at (0,0) (X) {$X$};
    \draw [->] (X.north)arc(170:-150:0.6);
    \node at (1.4,0.1) {$+$};
\end{tikzpicture}
\]
Let $(H,m)$ be this larger $M$-labeled graph:
\[
\begin{tikzcd}
A \arrow[r, "-", bend left] & B \arrow[d, "+", bend left] \arrow[r, "+"', bend right] \arrow[r, "-", bend left] & D \arrow[r, "+", bend left] \arrow[rd, "-"', bend right] & E \arrow[d, "-", bend left]                            \\
& C \arrow[lu, "-", bend left] \arrow[ru, "+"', bend right]         &                                                         
& F \arrow[u, "+", bend left] \arrow[ll, "+", bend left]
\end{tikzcd}
\]
In a sense, there is a copy of $(G,\ell)$ hiding in $(H,m)$.   Here it is:
\[
\begin{tikzcd}
A \arrow[r, "-", bend left] & B \arrow[d, "+", bend left]                    \\
& C \arrow[lu, "-", bend left]          
\end{tikzcd}
\]
Here the entity $A$ affects itself in a positive way, but \emph{indirectly}, through $B$ and $C$.  There is not a map of $M$-labeled graphs from $(G,\ell)$ to $(H,m)$.  Instead, there is a functor from the free $M$-graded category on $(G,\ell)$ to the free $M$-graded category on $(H,m)$, say
\[    F \maps \Free_M(G,\ell) \to \Free_M(H,m) .\]
This functor sends $X$ to $A$ and it sends the edge from $X$ to itself, which is a morphism in $\Free_M(G,\ell)$, to the path from $A$ to $B$ to $C$ to $A$, which is a morphism in $\Free_M(H,m)$.  Moreover $F$ is a map of $M$-graded categories.  This is the sense in which the motif $(G,\ell)$ appears in the larger $M$-labeled graph $(H,m)$.

To make this precise we need to understand the free $M$-graded category on an $M$-labeled graph.  

\begin{lem}
\label{Lem:Free_monoid-labeled_category}
Let $M$ be a monoid.  The forgetful functor sending $M$-labeled categories to their underlying $M$-labeled graphs,
\[   \Und_M \maps \cat/\B M \to \Gph/G_M ,\] 
has a left adjoint 
\[   \Free_M \maps \Gph/G_M \to \cat/\B M .\]
\end{lem}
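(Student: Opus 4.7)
The plan is to lift the known free-forgetful adjunction $\Free \dashv \Und$ between $\Gph$ and $\cat$ to the corresponding slice categories. The crucial observation is that $\Und(\B M) = G_M$: the one-object category $\B M$ has one morphism for each element of $M$, so its underlying graph is exactly the one-vertex graph $G_M$ whose edges are elements of $M$. Under this identification, the forgetful functor $\Und_M \maps \cat/\B M \to \Gph/G_M$ is simply the functor induced by applying $\Und$ on the nose, sending $(q \maps \C \to \B M)$ to $(\Und(q) \maps \Und(\C) \to G_M)$.

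For the left adjoint, given an $M$-labeled graph $p \maps G \to G_M$, I would define $\Free_M(p)$ to be the mate of $p$ under the adjunction, namely the composite
\[ \Free(G) \xrightarrow{\Free(p)} \Free(G_M) = \Free(\Und(\B M)) \xrightarrow{\epsilon_{\B M}} \B M, \]
where $\epsilon$ is the counit of $\Free \dashv \Und$. Concretely, this grades a path $v_0 \xrightarrow{e_1} v_1 \xrightarrow{e_2} \cdots \xrightarrow{e_m} v_m$ in $\Free(G)$ by the product $\ell(e_m) \cdots \ell(e_1) \in M$, which is exactly the indirect effect along the path that motivated the definition. On morphisms, $\Free_M$ just applies $\Free$ to the underlying graph map; this is automatically compatible with the grading by naturality of $\epsilon$.

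To verify $\Free_M \dashv \Und_M$, I would establish the hom-set bijection directly. A morphism $p \to \Und_M(q)$ in $\Gph/G_M$ is a graph map $f \maps G \to \Und(\C)$ with $\Und(q) \circ f = p$, while a morphism $\Free_M(p) \to q$ in $\cat/\B M$ is a functor $F \maps \Free(G) \to \C$ with $q \circ F = \epsilon_{\B M} \circ \Free(p)$. The underlying adjunction supplies the bijection $f \leftrightarrow F$ via $F = \epsilon_\C \circ \Free(f)$ and $f = \Und(F) \circ \eta_G$. One then checks that the constraint over $\B M$ corresponds to the constraint over $G_M$: in one direction, naturality of $\epsilon$ gives $\epsilon_{\B M} \circ \Free(\Und(q) \circ f) = q \circ \epsilon_\C \circ \Free(f) = q \circ F$; in the other, applying $\Und$ and using naturality of $\eta$ together with a triangle identity recovers $\Und(q) \circ f = p$ from $q \circ F = \epsilon_{\B M} \circ \Free(p)$.

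I expect no serious obstacle here: this is an instance of the general principle that any adjunction $F \dashv U$ slices to an adjunction between $\A/U(b)$ and $\B/b$ for each object $b$, and the only content is the routine triangle-identity bookkeeping indicated above. The sole thing to keep track of is the identification $G_M = \Und(\B M)$, which makes the slicing literally apply on the nose rather than requiring the insertion of a comparison isomorphism.
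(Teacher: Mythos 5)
Your proposal is correct, and the functor $\Free_M$ you construct is literally the one in the paper: the mate $\epsilon_{\B M}\circ\Free(p)$ grades a path by the product $\ell(e_m)\cdots\ell(e_1)$, exactly as in the paper's explicit description. The difference is in how the adjunction is established. The paper simply writes down $\Free_M$ on objects and morphisms and ends with ``one can check that $\Free_M$ is left adjoint to $\Und_M$,'' leaving the verification to the reader. You instead observe that the whole lemma is an instance of the general fact that an adjunction $\Free\dashv\Und$ between $\Gph$ and $\cat$ induces an adjunction between the slices $\Gph/\Und(\B M)$ and $\cat/\B M$, with the single identification $\Und(\B M)=G_M$ making this apply on the nose, and you sketch the hom-set bijection together with the naturality and triangle-identity bookkeeping that the paper omits. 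Your route is more conceptual and actually supplies the missing verification; the paper's route has the advantage of making the grading formula (the ``indirect effect along a path'') immediately visible without unwinding the counit. Both are sound, and there is no gap in what you wrote.
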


\begin{proof}
We define $\Free_M$ as follows.  For any $M$-labeled graph $(G,\ell)$ we define $\Free_M(G,\ell)$ to have $\Free(G)$ as its underlying category, with the labeling of any morphism
\[  v_0 \xrightarrow{e_1} v_1 \xrightarrow{e_2} \cdots \xrightarrow{e_{n-1}} v_{n-1} \xrightarrow{e_n} v_n \]
in $\Free(G)$ being the product $\ell(e_n) \cdots \ell(e_1) \in M$.    Recall that a map of $L$-labeled graphs $f \maps (G,\ell) \to (G',\ell')$ is a map $f_0$ sending vertices to vertices and a map $f_1$ sending edges to edges, preserving the labeling.  Thus we define $\Free_M(f)$ to equal $f_0$ on objects of $\Free_M(G,\ell)$ and to send any morphism
\[  v_0 \xrightarrow{e_1} v_1 \xrightarrow{e_2} \cdots \xrightarrow{e_{n-1}} v_{n-1} \xrightarrow{e_n} v_n \]
to 
\[  f_0(v_0) \xrightarrow{f_1(e_1)} f_0(v_1) \xrightarrow{f_1(e_2)} \cdots \xrightarrow{f_1(e_{n-1})} f_0(v_{n-1}) \xrightarrow{f_1(e_n)} f_0(v_n).\]
One can check that $\Free_M$ is left adjoint to $\Und_M$.
\end{proof}

\begin{defn}
\label{Defn:Kleisli_category}
For any monoid $M$, the \define{Kleisli category of $M$-labeled graphs}, $\K(\Gph/G_M)$ has:
\begin{itemize}
\item as objects, $M$-labeled graphs,
\item as morphisms from an $M$-labeled graph $(G,\ell)$ to an $M$-labeled graph $(G', \ell')$, maps of $M$-graded categories
\[    f \maps \Free_M(G,\ell) \to \Free_M(G',\ell'), \]
\item as composition, the usual composition of maps of $M$-graded categories,
\item as identity morphism, the usual identity maps of $M$-graded categories.
\end{itemize}
We call a morphism in this category a \define{Kleisli morphism} between $M$-labeled graphs.
\end{defn}

The concept of Kleisli category shows up whenever we have a pair of adjoint functors.  Usually the Kleisli category is described in terms of a monad, which here would be the monad $\Und_M \circ \Free_M$.  There is a bijection between morphisms
\[   \Free_M(G,\ell) \to \Free_M(G',\ell') \]
and morphisms
\[   (G,\ell) \to (\Und_M \circ \Free_M)(G',\ell') . \]
It is common to describe composition of morphisms of the former sort indirectly, in terms of morphisms of the latter sort.  However, this is not needed for our purposes.

Aduddell et al. \cite{Aduddell} introduced the Kleisli category of $\{+,-\}$-labeled graphs to study motifs in biochemical regulatory networks. 

\begin{eg}
Below we list some $\{+,-\}$-labeled graphs with special names, which serve as motifs in the work of Aduddell et al.\ \cite{Aduddell} and Tyson et al.\ \cite{tyson2010functional} :

\begin{itemize}
\item \define{positive autoregulation} and \define{negative autoregulation}:
\[
\begin{tikzcd}
v \arrow["+"', loop, distance=2em, in=125, out=45]
\end{tikzcd}
\quad
\begin{tikzcd}
v. \arrow["-"', loop, distance=2em, in=125, out=45]
\end{tikzcd} 
\]
\item \define{positive stimulation} and \define{negative stimulation}:
\[
\begin{tikzcd}
v \arrow[r, "+", bend left] & w
\end{tikzcd}
\quad
\begin{tikzcd}
w \arrow[r, "-", bend left] & v.
\end{tikzcd}
\]
\item the \define{positive feedback 
loop}, \define{negative feedback loop} and \define{double-negative feedback loop}:
\[
\begin{tikzcd}
v \arrow[r, "+", bend left] & w \arrow[l, "+", bend left]
\end{tikzcd}
\quad
\begin{tikzcd}
v \arrow[r, "+", bend left] & w \arrow[l, "-", bend left]
\end{tikzcd}
\quad
\begin{tikzcd}
v \arrow[r, "-", bend left] & w. \arrow[l, "-", bend left]
\end{tikzcd}
\]
\item
the \define{coherent feedforward loop} and \define{incoherent feedforward loop}:
\[
\begin{tikzcd}
v \arrow[r, "+", bend left] \arrow[r, "+"', bend right] & w
\end{tikzcd}
\quad
\begin{tikzcd}
v \arrow[r, "+", bend left] \arrow[r, "-"', bend right] & w.
\end{tikzcd}
\]
\item
three kinds of \define{branches}:
\[
\begin{tikzcd}[column sep=0.5em]
& u \arrow[ld, "+"', bend right] \arrow[rd, "+", bend left] &   \\
v                                                       && w
\end{tikzcd}
\qquad
\begin{tikzcd}[column sep=0.5em]
& u \arrow[ld, "+"', bend right] \arrow[rd, "-", bend left] &   \\
v                                                       && w
\end{tikzcd}
\qquad
\begin{tikzcd}[column sep=0.5em]
& u \arrow[ld, "-"', bend right] \arrow[rd, "-", bend left] &   \\
v                                                       && w
\end{tikzcd}
\]
\item
three kinds of \define{logic gates}:
\[
\begin{tikzcd}[column sep=0.5em]
v \arrow[dr, "+", bend right, swap] && w \arrow[dl, "+"', bend left, swap] \\
& u
\end{tikzcd}
\qquad
\begin{tikzcd}[column sep=0.5em]
v \arrow[dr, "+", bend right, swap] && w \arrow[dl, "-"', bend left, swap] \\
& u
\end{tikzcd}
\qquad
\begin{tikzcd}[column sep=0.5em]
v \arrow[dr, "-", bend right, swap] && w \arrow[dl, "-", bend left] \\
& u
\end{tikzcd}
\]
\end{itemize}
A third feedforward loop not mentioned by the above authors could be called the
\define{double-negative feedforward loop}:
\[
\begin{tikzcd}
v \arrow[r, "-", bend left] \arrow[r, "-"', bend right] & w
\end{tikzcd}
\]
Starting from the fundamental motifs listed above, one can build other important ones, such as the \define{overlapping feedforward loops} formed by combining feedforward loops with branches or logic gates:
\[
\begin{tikzcd}[column sep=0.5em]
v \arrow[dr, "+", swap, bend right] \arrow[rr, "+"', bend right] \arrow[rr, "+", bend left] &&
w \arrow[dl, "+"', swap, bend left]
\\
& u
\end{tikzcd}
\qquad
\begin{tikzcd}[column sep=0.5em]
v \arrow[dr, "+", swap, bend right] \arrow[rr, "+"', bend right] \arrow[rr, "+", bend left] &&
w \arrow[dl, "-"', swap, bend left]
\\
& u
\end{tikzcd}
\qquad
\begin{tikzcd}[column sep=0.5em]
v \arrow[dr, "-", swap, bend right] \arrow[rr, "+"', bend right] \arrow[rr, "+", bend left] &&
w \arrow[dl, "-"', swap, bend left]
\\
& u
\end{tikzcd}
\]
\[
\begin{tikzcd}[column sep=0.5em]
& u \arrow[dl, "+"', bend right] \arrow[dr, "+", bend left] &   \\
v \arrow[rr, "-"', bend right] \arrow[rr, "+", bend left]  && w
\end{tikzcd}
\qquad
\begin{tikzcd}[column sep=0.5em]
& u \arrow[dl, "+"', bend right] \arrow[dr, "-", bend left] &   \\
v \arrow[rr, "-"', bend right] \arrow[rr, "+", bend left]  && w
\end{tikzcd}
\qquad
\begin{tikzcd}[column sep=0.5em]
& u \arrow[dl, "-"', bend right] \arrow[dr, "-", bend left] &   \\
v \arrow[rr, "-"', bend right] \arrow[rr, "+", bend left]  && w
\end{tikzcd}
\]
\end{eg}

For a monoid $M$, denote the Kleisli category of $M$-labeled graphs (see \cref{Defn:Kleisli_category}) by $\K(\Gph/G_M)$. Given a homomorphism $\phi \maps M \to M'$ of monoids, there is a functor 
\[ 
\begin{array}{cccl}
\phi_\ast \maps & \K(\Gph/G_M) &\to& \K(\Gph/G_{M'}) \\ \\
& (G, \ell) &\mapsto& (G, \phi \circ \ell)\\
&\Big( F \maps \Free_M(G,\ell_1) \to \Free_M(H,\ell_2) \Big) &\mapsto& \Big( F \maps \Free_{M'}(G,\phi \circ \ell_1) \to \Free_{M'}(H,\phi \circ \ell_2) \Big)
\end{array}
\]
which allows us to change labelings on the edges  \emph{functorially}, as we observe below:

\begin{prop}
\label{Prop:Change_of_labeling_monoid}
The following assignment
\[
\begin{array}{cccl}
F_{\mathrm{m}} \maps &  \Mon &\to& \cat \\ \\
& M &\mapsto& \K(\Gph/G_M) \\
&(M \xrightarrow{\phi} M') &\mapsto& (\K(\Gph/G_M) \xrightarrow{\phi_\ast} \K(\Gph/G_{M'}))
\end{array}
\]
defines a functor, where $\Mon$ is the category of monoids.
\end{prop}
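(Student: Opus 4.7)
The plan is to unpack the definition of $\phi_\ast$ in three stages: check that $\phi_\ast$ sends objects to objects, check that $\phi_\ast$ sends Kleisli morphisms to Kleisli morphisms, and then verify preservation of identities and composition in $\Mon$.

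\textbf{Object assignment.} For any $M$-labeled graph $(G,\ell)$, the composite $\phi \circ \ell \maps E \to M'$ is a function, so $(G, \phi \circ \ell)$ is an $M'$-labeled graph. This uses nothing about $\phi$ beyond it being a function.

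\textbf{Morphism assignment.} This is the step where the monoid-homomorphism property of $\phi$ is used. A morphism from $(G,\ell_1)$ to $(H,\ell_2)$ in $\K(\Gph/G_M)$ is a map of $M$-graded categories $F \maps \Free_M(G,\ell_1) \to \Free_M(H,\ell_2)$. By \cref{Lem:Free_monoid-labeled_category}, the underlying category $\Free(G)$ of $\Free_M(G,\ell_1)$ coincides with the underlying category of $\Free_{M'}(G,\phi\circ\ell_1)$; only the grading on morphisms changes. So the same underlying functor $F$ produces a candidate $\phi_\ast(F)$, and the only thing to verify is that $F$ preserves the $M'$-grading. For a path $v_0 \xrightarrow{e_1} \cdots \xrightarrow{e_n} v_n$ in $G$, the $M'$-grade is
\[
(\phi\circ\ell_1)(e_n) \cdots (\phi\circ\ell_1)(e_1) = \phi\bigl(\ell_1(e_n) \cdots \ell_1(e_1)\bigr),
\]
where the equality uses that $\phi$ is a monoid homomorphism, and the right-hand side is precisely $\phi$ applied to the $M$-grade. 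Since $F$ preserves $M$-grades and $\phi$ is a well-defined function, $F$ preserves $M'$-grades. Thus $\phi_\ast(F)$ is a Kleisli morphism from $(G,\phi\circ\ell_1)$ to $(H,\phi\circ\ell_2)$. Functoriality of $\phi_\ast$ itself (preserving identities and composition in $\K(\Gph/G_M)$) is immediate because $\phi_\ast$ does not alter the underlying functor.

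\textbf{Functoriality of $F_{\mathrm{m}}$.} For the identity $\id_M \maps M \to M$ we have $\id_M \circ \ell = \ell$, so $(\id_M)_\ast$ is the identity on objects, and on morphisms it leaves the underlying functor unchanged; hence $F_{\mathrm{m}}(\id_M) = \id_{\K(\Gph/G_M)}$. For composable homomorphisms $M \xrightarrow{\phi} M' \xrightarrow{\phi'} M''$, associativity of function composition gives $(\phi'\circ\phi)\circ\ell = \phi'\circ(\phi\circ\ell)$ on objects, and on morphisms both $(\phi'\circ\phi)_\ast$ and $(\phi')_\ast \circ \phi_\ast$ return the same underlying functor $F$. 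Hence $F_{\mathrm{m}}(\phi'\circ\phi) = F_{\mathrm{m}}(\phi') \circ F_{\mathrm{m}}(\phi)$.

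The only step that requires any thought is the morphism assignment, and even there the work reduces to the single identity $\phi(mm') = \phi(m)\phi(m')$; every other verification is automatic because $\phi_\ast$ acts on Kleisli morphisms by keeping the underlying functor fixed and merely reinterpreting the gradings.
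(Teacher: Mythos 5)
Your proof is correct and follows essentially the same route as the paper, whose own proof is a one-line verification that $\phi'_\ast(G,\phi\circ\ell)=(G,\phi'\circ\phi\circ\ell)$ for composable homomorphisms. You additionally spell out why $\phi_\ast$ is well defined on Kleisli morphisms (that a grade-preserving functor remains grade-preserving after relabeling, via $\phi(m m')=\phi(m)\phi(m')$), a point the paper asserts without proof in the paragraph defining $\phi_\ast$; this is a worthwhile addition but not a different approach.
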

\begin{proof}
 This follows because for any composable pair of monoid homomorphisms $M \xrightarrow{\phi} M' \xrightarrow{\phi'} M''$ we have 
\[\phi'_\ast(G, \phi \circ \ell)= (G, \phi' \circ \phi \circ \ell) . \qedhere \]
\end{proof}

Applying the Grothendieck construction to the functor in \cref{Prop:Change_of_labeling_monoid}, we obtain the \define{category of monoid-labeled graphs and Kleisli morphisms} $\int F_{\text{m}}$.    This bundles up the categories of $M$-labeled graphs and Kleisli morphisms for all monoids $M$ into a single category.
\begin{prop}
\label{Prop:Category_of_monoid-labeled_graphs}
$\int F_{\mathrm{m}}$ is the category where:
\begin{itemize}
\item an object is a pair $(M, p \maps G \to G_M)$,
\item a morphism from $(M, p \maps G \to G_M)$ to $(M', p' \maps G' \to G_{M'})$ is a pair $(\phi,\theta)$, where $\phi \maps M \to M'$ is a homomorphism of monoids and $\theta \maps \Free(G) \to \Free(G')$ is a functor such that this square commutes:
\[
\begin{tikzcd}
\Free(G) \arrow[r, "\theta"] \arrow[d, "\tilde{p}"'] & \Free(G') \arrow[d, "\tilde{p'}"] \\
\B M \arrow[r, "\tilde{\phi}"']                & \B M'          
\end{tikzcd}
\]
where 
\begin{itemize}
\item the functor $\tilde{\phi} \maps \B M \to \B M' :=\B(\phi)$, where $\B \maps \Mon \to \cat$ is the functor that takes a monoid to its associated 1-object category,
\item $\tilde{p} \colon \Free(G) \to \B M$ is induced from from $p \colon G \to G_M$, which takes any morphism
\[  v_0 \xrightarrow{e_1} v_1 \xrightarrow{e_2} \cdots \xrightarrow{e_{m-1}} v_{m-1} \xrightarrow{e_m} v_m \]
in $\Free(G)$ to the unique morphism in $\B M$ associated with
\[   \ell(e_m) \cdots \ell(e_1) \in M ,\] 
\item $\tilde{p}' \colon \Free(G') \to \B M'$ is defined similarly to $\tilde{p}$.
\end{itemize}
\end{itemize}
\end{prop}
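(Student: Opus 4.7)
The plan is to directly unpack the Grothendieck construction applied to the functor $F_{\mathrm{m}}$ from \cref{Prop:Change_of_labeling_monoid} and check that the resulting data matches the claimed description.

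First, I would identify objects. By definition, an object of $\int F_{\mathrm{m}}$ is a pair $(M, X)$ with $M \in \Mon$ and $X \in F_{\mathrm{m}}(M) = \K(\Gph/G_M)$. Since the Kleisli category $\K(\Gph/G_M)$ has the same objects as $\Gph/G_M$, namely $M$-labeled graphs viewed as maps $p \maps G \to G_M$, the object description is immediate.

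Next, I would unpack morphisms. By the Grothendieck construction, a morphism $(M, (G,p)) \to (M', (G',p'))$ is a pair $(\phi, \theta')$ where $\phi \maps M \to M'$ is a monoid homomorphism and $\theta'$ is a morphism in $\K(\Gph/G_{M'})$ from $\phi_*(G,p) = (G, \phi \circ p)$ to $(G', p')$. By \cref{Defn:Kleisli_category}, this $\theta'$ is a map of $M'$-graded categories
\[ \theta' \maps \Free_{M'}(G, \phi \circ p) \to \Free_{M'}(G', p'). \]
By the construction in \cref{Lem:Free_monoid-labeled_category}, the underlying category of $\Free_{M'}$ applied to any $M'$-labeled graph is $\Free$ of the underlying graph. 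Hence $\theta'$ corresponds to an ordinary functor $\theta \maps \Free(G) \to \Free(G')$ subject to the grading-preservation condition $\widetilde{p'} \circ \theta = \widetilde{\phi \circ p}$ as functors to $\B M'$.

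The main (and essentially only) computational step is to show $\widetilde{\phi \circ p} = \tilde{\phi} \circ \tilde{p}$, so that grading-preservation becomes exactly the commuting square in the statement. Evaluating on a path $v_0 \xrightarrow{e_1} \cdots \xrightarrow{e_m} v_m$ in $\Free(G)$, the left side yields $\phi\bigl(\ell(e_m) \cdots \ell(e_1)\bigr)$ while the right side yields $\phi(\ell(e_m)) \cdots \phi(\ell(e_1))$; these agree because $\phi$ is a monoid homomorphism, with the identity path handled by $\phi(1)=1$. Finally, composition and identities in $\int F_{\mathrm{m}}$ translate to the evident composition and identities of pairs $(\phi, \theta)$ under this identification, yielding the claimed category.
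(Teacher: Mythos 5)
Your proposal is correct and follows exactly the route the paper intends: the paper states this proposition as a direct unpacking of the Grothendieck construction applied to $F_{\mathrm{m}}$ and omits the verification entirely. You supply precisely the expected details, with the one genuine computation — that $\widetilde{\phi \circ p} = \tilde{\phi} \circ \tilde{p}$ because $\phi$ is a monoid homomorphism — correctly identified and carried out.
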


\section{Commutative monoid-labeled graphs}
\label{Sec:Commutative_monoid-labeled_graphs}

Now we turn to graphs with edges labeled by elements of a \emph{commutative} monoid.  This allows for a new concept of morphism between labeled graphs, and permits a deeper study of feedback loops.   We use additive notation for the operation in a commutative monoid.

Given a commutative monoid $C$, we define $C$-labeled graphs and maps of $C$-labeled graphs just as we do as when $C$ is a mere set (Definitions \ref{Defn:Set-labeled-graph} and \ref{Defn:morphisms-of-set-labeled-graphs}).   However, there is another useful concept of morphism between \define{finite} $C$-labeled graphs, meaning those with finitely many edges and vertices.   (In applications, $C$-labeled graphs are usually finite.)

\begin{defn}
\label{Defn:Morphism-of-commutative-monoid-labeled-graphs}
Let $C$ be a commutative monoid, and let $(G,\ell) = (E,V,s,t,\ell)$ and $(G',\ell') =(E',V',s',t',\ell')$ be two finite $C$-labeled graphs.  We define an \textbf{additive morphism} from $(G,\ell)$ to $(G',\ell')$ to be a map of graphs $f \maps G \to G'$ such that the following condition holds for all $e' \in E'$:
\begin{equation}
\ell'(e')= \sum_{\lbrace e \in E \maps f_1(e)= e'\rbrace} \ell(e).
\label{Eq:Pushforward_along_a_map_of_finite_graphs}
\end{equation}
The collection of finite $C$-labeled graphs and their additive morphisms forms a category whose composition law is induced from the composition law in the category $\Fin\Gph$.  We denote this category by $C\Fin\Gph$.
\end{defn}

Equation \cref{Eq:Pushforward_along_a_map_of_finite_graphs} says that each edge $e'$ of $G'$ is labeled by the sum of the labels of edges of $G$ that map to $e'$.   We restrict ourselves to commutative monoids to make sure that the sum is independent of the order of summation, and to finite graphs to make sure the sum is a finite one. Furthermore, observe that $\ell(e')=0$ if there is no $e \in E$ such that $f(e)=e'$, since any sum over the empty set vanishes. 

We can push forward an $C$-labeling along any morphism of finite graphs. More precisely, we have the following:

\begin{lem}
\label{Lem:Pushforward_along_a_map_of_finite_graphs}
Let $C$ be a commutative monoid and $(G,\ell)$ a finite $C$-labeled graph.  Then, for any map of finite graphs $f \maps G \to G'$, there is a unique $C$-labeling $f_\ast \ell$ of $G'$, called the \define{pushforward of $\ell$ along $f$}, such that $f$ is an additive morphism of $C$-labeled graphs from $(G,\ell)$ to $(G',f_\ast \ell)$.   This pushforward is covariantly functorial in the following sense:
\[      (f \circ g)_\ast = f_\ast \circ g_\ast, \qquad 1_\ast = 1 . \]
\end{lem}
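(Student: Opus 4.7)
The plan is to show existence and uniqueness of $f_\ast \ell$ together, and then verify the functoriality equations by a direct partition-of-fibers calculation.

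For existence and uniqueness, the condition in \cref{Defn:Morphism-of-commutative-monoid-labeled-graphs} forces the value of any $C$-labeling $\ell'$ on $G'$ that makes $f$ an additive morphism: namely, for every $e' \in E'$ we must have
\[
\ell'(e') \;=\; \sum_{e \in f_1^{-1}(e')} \ell(e).
\]
This determines $\ell'$ uniquely, giving uniqueness. For existence, I would simply define $f_\ast \ell$ by this formula. The sum is well defined because $G$ is finite (so each fiber $f_1^{-1}(e')$ is a finite set) and because $C$ is commutative (so the sum does not depend on a choice of ordering on the fiber). When the fiber is empty the sum is taken to be $0 \in C$, as the excerpt already notes. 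With $f_\ast \ell$ so defined, $f$ is by construction an additive morphism $(G,\ell) \to (G', f_\ast \ell)$.

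For functoriality, consider finite graphs and maps $G \xrightarrow{g} G' \xrightarrow{f} G''$, together with a $C$-labeling $\ell$ on $G$. I would check pointwise that $(f \circ g)_\ast \ell = f_\ast(g_\ast \ell)$ as labelings on $G''$. For each $e'' \in E''$, the fibers refine as
\[
(f \circ g)_1^{-1}(e'') \;=\; \coprod_{e' \in f_1^{-1}(e'')} g_1^{-1}(e'),
\]
since $(f \circ g)_1 = f_1 \circ g_1$. Summing $\ell$ over both sides and using commutativity of $C$ to regroup the (finite) sum according to this partition gives
\[
\big((f \circ g)_\ast \ell\big)(e'') \;=\; \sum_{e' \in f_1^{-1}(e'')} \;\sum_{e \in g_1^{-1}(e')} \ell(e) \;=\; \sum_{e' \in f_1^{-1}(e'')} (g_\ast \ell)(e') \;=\; \big(f_\ast (g_\ast \ell)\big)(e''),
\]
which is the required identity. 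The case $1_\ast = 1$ is immediate, since the fiber over any edge $e'$ under the identity map is the singleton $\{e'\}$, so the defining sum reduces to $\ell(e')$ itself.

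No step is really an obstacle here; the only points to be careful about are that finiteness of $G$ is exactly what makes the pushforward sums finite, and that commutativity of $C$ is exactly what makes them well defined independently of any ordering—both hypotheses are used essentially, and once they are in hand the proof is a one-line verification of each assertion.
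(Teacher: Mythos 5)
Your proposal is correct and matches the paper's proof, which likewise defines $f_\ast \ell$ by the forced fiberwise-sum formula and notes that this choice works; you simply spell out the functoriality verification (via refinement of fibers) that the paper leaves implicit. No issues.
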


\begin{proof}
By the definition of additive morphism we are forced to take
\[  (f_\ast \ell)(e'):= \sum_{\lbrace e \in E \maps f_1(e)= e'\rbrace} \ell(e) \]
for all edges $e'$ of $G'$, and this choice works.
\end{proof}

To see how this fact can be used in system dynamics or systems biology, suppose that each hour a coffee shop is open it sells $\$ 150$ of coffee and $\$ 25$ of tea.  We can model this with 
an $(\R,+,0)$-labeled graph:
\[
\begin{tikzcd}[row sep = "0.1 em"]
 \mathrm{hours\; open}  \arrow[r, bend left, pos = 0.5, "150"]   \arrow[r, bend right, swap, pos = 0.45, "25"] & \mathrm{sales.}
\end{tikzcd}
\]
We can simplify this model by mapping its underlying graph to a simpler graph:
\[
\begin{tikzcd}[row sep = "0.1 em"]
 \mathrm{hours\; open}  \arrow[r, bend left, ""]   \arrow[r, bend right, swap, ""] & \mathrm{sales} 
 & & {} \arrow[r, "f"] & {} &   \mathrm{hours\; open}  \arrow[r, ""] & \mathrm{sales.}
\end{tikzcd}
\]
\cref{Lem:Pushforward_along_a_map_of_finite_graphs} says there is a unique way to label the edges of the 
second graph that lets us lift the map $f$ to an additive morphism of $(\R,+,0)$-labeled graphs:
\[
\begin{tikzcd}[row sep = "0.1 em"]
 \mathrm{hours\; open}  \arrow[r, bend left, "150"]   \arrow[r, bend right, pos = 0.45, swap, "25"] & \mathrm{sales} 
 & & {} \arrow[r, "f"] & {} &   \mathrm{hours\; open}  \arrow[r, "175"] & \mathrm{sales.}
\end{tikzcd}
\]

Just as \cref{Lem:Pullback_along_a_map_of_graphs} says that for any set $L$ the forgetful functor from $L$-labeled graphs to graphs is a discrete fibration, \cref{Lem:Pushforward_along_a_map_of_finite_graphs} says that for any commutative monoid $C$ the forgetful functor from $C$-labeled finite graphs to finite graphs is a discrete opfibration.

\begin{prop}
\label{Prop:Commutative_monoid-labeled_graph}
For any commutative monoid $C$, we have the following:
\begin{itemize}
\item[(a)] The forgetful functor $U \maps C\Fin\Gph \to  \Fin\Gph$ is a discrete opfibration.
\item[(b)] The category of elements $\int \Q$ of the covariant functor $\Q \maps \Fin\Gph \to \Set$ associated with $U$ is equivalent to the category $C\Fin\Gph$.
\end{itemize}
\end{prop}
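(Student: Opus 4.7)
The plan is to deduce both parts directly from \cref{Lem:Pushforward_along_a_map_of_finite_graphs}, mirroring the strategy used for \cref{Prop:Set-labeled_graph}(b),(d) but with the variance reversed: where pulling back labelings gave a discrete fibration, pushing forward gives a discrete opfibration.

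For (a), I would first recall the definition: a functor $U \maps \mathcal{E} \to \mathcal{B}$ is a \emph{discrete opfibration} if, for every object $X \in \mathcal{E}$ and every morphism $f \maps U(X) \to B'$ in $\mathcal{B}$, there is a unique morphism $\tilde{f} \maps X \to Y$ in $\mathcal{E}$ with $U(\tilde{f}) = f$. Applied to $U \maps C\Fin\Gph \to \Fin\Gph$, this asks: given $(G, \ell)$ and a map of finite graphs $f \maps G \to G'$, is there a unique additive morphism out of $(G, \ell)$ lifting $f$? \cref{Lem:Pushforward_along_a_map_of_finite_graphs} answers exactly this: the labeling $f_\ast \ell$ is the unique $C$-labeling of $G'$ making $f \maps (G, \ell) \to (G', f_\ast \ell)$ an additive morphism, so the unique lift is $\tilde{f} = f$ viewed as this additive morphism. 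That proves (a).

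For (b), I would invoke the Grothendieck correspondence between discrete opfibrations over a category $\mathcal{B}$ and covariant functors $\mathcal{B} \to \Set$. Given the discrete opfibration $U$ from part (a), the associated functor
\[ \Q \maps \Fin\Gph \to \Set \]
sends a finite graph $G$ to the fiber $U^{-1}(G)$, i.e.\ the set of $C$-labelings $\ell \maps E \to C$ of $G$, and sends a map of finite graphs $f \maps G \to G'$ to the pushforward operation $\ell \mapsto f_\ast \ell$; functoriality of this assignment is exactly the second statement of \cref{Lem:Pushforward_along_a_map_of_finite_graphs}. Unpacking the category of elements $\int \Q$, an object is a pair $(G, \ell)$ with $\ell \in \Q(G)$, i.e.\ a $C$-labeled finite graph, and a morphism $(G, \ell) \to (G', \ell')$ is a map of finite graphs $f \maps G \to G'$ satisfying $\Q(f)(\ell) = \ell'$, i.e.\ $f_\ast \ell = \ell'$; by the definition of additive morphism and \cref{Lem:Pushforward_along_a_map_of_finite_graphs}, this condition is equivalent to saying that $f$ is an additive morphism from $(G, \ell)$ to $(G', \ell')$. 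Hence the identity-on-objects functor $\int \Q \to C\Fin\Gph$ is an isomorphism of categories, and in particular an equivalence.

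I do not anticipate a real obstacle; the only minor care needed is to make sure the fiber sets $\Q(G)$ are honest sets (they are, since $E$ is finite and $C$ is a set) and that $\Q$ really is a functor (immediate from functoriality of pushforward). As in \cref{Prop:Set-labeled_graph}, one could alternatively cite the general Grothendieck construction and apply it verbatim, but the hands-on unpacking above makes the correspondence with \cref{Defn:Morphism-of-commutative-monoid-labeled-graphs} transparent.
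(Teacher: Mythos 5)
Your proposal is correct and follows essentially the same route as the paper: part (a) is exactly an application of \cref{Lem:Pushforward_along_a_map_of_finite_graphs}, and part (b) is obtained by identifying $\Q$ concretely as the functor sending a finite graph to its set of $C$-labelings with pushforward as the morphism action, then unpacking the category of elements. Your version is just slightly more explicit about the definition of a discrete opfibration and the identity-on-objects comparison functor.
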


\begin{proof}
(a) was proved in \cref{Lem:Pushforward_along_a_map_of_finite_graphs}.  (b) follows from general principles, but one can see it concretely as follows.  First note that the functor $\Q \maps \Fin\Gph \to \Set$ associated with the discrete fibration $U \maps C\Fin\Gph \to \Fin\Gph$ maps any finite graph to its set of $C$-labelings, and any map of finite graphs $f \maps G \to G'$ to the map sending $C$-labelings of $G$ to $C$-labelings of $G'$ given by Equation \eqref{Eq:Pushforward_along_a_map_of_finite_graphs}.  Thus, the category of elements of $\Q$ is equivalent to $C\Fin\Gph$. 
\end{proof}

 \cref{Prop:Commutative_monoid-labeled_graph} (a) implies that there is a functorial way to push forward $C$-labelings along maps of graphs.  We can also push forward labelings along a homomorphism of commutative monoids. Given a homomorphism $\phi \maps C \to C'$ of commutative monoid and an $C$-labeled finite graph $(G,\ell)$, we can define an $C'$-labeled graph 
\begin{equation}
\label{Eq:Change_of_labeling_commutative_monoid} 
     \phi_\ast(G,\ell) = (G, \phi \circ \ell) .
\end{equation}

\begin{prop}
\label{Prop:Change_of_labeling_commutative_monoid}
The following assignment
\[
\begin{array}{cccl}
F_{\mathrm{cm}} \maps &  \Comm\Mon &\to& \cat \\ \\
& C &\mapsto& C\Fin\Gph \\
&(C \xrightarrow{\phi} C') &\mapsto& (C\Fin\Gph \xrightarrow{\phi_\ast} C'\Fin\Gph)
\end{array}
\]
defines a functor, where $\Comm\Mon$ is the category of commutative monoids.
\end{prop}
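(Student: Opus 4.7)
The plan is to verify two things: first, that for each homomorphism $\phi \maps C \to C'$ the assignment $\phi_\ast$ described in equation~\eqref{Eq:Change_of_labeling_commutative_monoid} is a well-defined functor $C\Fin\Gph \to C'\Fin\Gph$; and second, that $F_{\mathrm{cm}}$ respects composition and identities in $\Comm\Mon$.

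For the first point, $\phi_\ast$ clearly sends objects to objects, so the content is to check that it sends additive morphisms to additive morphisms. Let $f \maps (G,\ell) \to (G',\ell')$ be an additive morphism. I would take the same underlying map of finite graphs and check the condition of \cref{Defn:Morphism-of-commutative-monoid-labeled-graphs} for $(G, \phi \circ \ell) \to (G', \phi \circ \ell')$. For each edge $e' \in E'$, additivity of $f$ gives $\ell'(e') = \sum_{\{e : f_1(e) = e'\}} \ell(e)$, and applying $\phi$ to both sides yields $(\phi \circ \ell')(e') = \sum_{\{e : f_1(e) = e'\}} (\phi \circ \ell)(e)$ because a monoid homomorphism preserves finite sums. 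The empty-sum case---when no edge maps to $e'$---is handled by $\phi(0) = 0$, which is part of being a monoid homomorphism. Preservation of composition and identities in $C\Fin\Gph$ is then immediate, since $\phi_\ast$ does not alter the underlying maps of graphs.

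For the second point, on objects I have $(\phi' \circ \phi)_\ast(G,\ell) = (G, \phi' \circ \phi \circ \ell) = \phi'_\ast(\phi_\ast(G,\ell))$, and on morphisms both sides act as the identity on the underlying map of finite graphs; similarly $(\id_C)_\ast(G,\ell) = (G,\ell)$. This is the exact pattern already invoked in the proof of \cref{Prop:Change_of_labeling_monoid}.

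I do not anticipate any serious obstacle: the only ingredient beyond the corresponding fact for general monoids (\cref{Prop:Change_of_labeling_monoid}) is the compatibility of $\phi$ with the finite sums defining additive morphisms, and this is built into the notion of monoid homomorphism (including the empty-sum case via $\phi(0)=0$). The commutativity of $C$ and $C'$, and the finiteness of the graphs, are only needed to ensure that the sums on both sides of the additivity condition are already well-defined in the first place.
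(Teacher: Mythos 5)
Your proposal is correct. The paper in fact states this proposition without proof (only the analogous monoid-labeled result, \cref{Prop:Change_of_labeling_monoid}, gets a one-line argument for functoriality on objects), and your argument supplies exactly the routine verification the authors evidently had in mind: the one genuinely new point beyond the monoid case is that $\phi_\ast$ carries additive morphisms to additive morphisms, which you correctly reduce to a monoid homomorphism preserving finite sums, including the empty sum via $\phi(0)=0$.
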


The functors $\phi_\ast$ have many practical applications:

\begin{eg}
\label{Eg:Map_to_terminal_monoid}
Every commutative monoid $C$ has a unique homomorphism $\phi \maps C \to 1$ where $\{1\}$ is the terminal monoid discussed in \cref{Eg:1}.    The resulting functor
\[   \phi_\ast \maps C\Fin\Gph \to \{1\}\Fin\Gph \cong \Gph \]
takes any $C$-labeled finite graph and discards the labeling, giving a finite graph.   This can be used to discard information about \emph{how} one vertex directly affects another and merely retain the fact \emph{that} it does.
\end{eg}

\begin{eg}
\label{Eg:Map_to_Z/2}
There is a homomorphism $\phi \maps \R - \{0\} \to \{+,-\}$ from the multiplicative group of the reals (see \cref{Eg:multiplicative_group_of_reals}) to the group $\{+,-\}$ (see \cref{Eg:Z/2}) sending all positive numbers to $+$ and all negative numbers to $-$.   The resulting functor $\phi_\ast$ turns \emph{quantitative} information about how much one vertex directly affects another into purely \emph{qualitative} information.
\end{eg}

\begin{eg}
\label{Eg:Map_to_Z/3}
There is a homomorphism $\phi \maps \R \to \{+,0,-\}$ from the multiplicative monoid of the reals (see \cref{Eg:multiplicative_monoid_of_reals}) to the monoid $\{+,0,-\}$ (see \cref{Eg:Z/3}) sending all positive numbers to $+$, all negative numbers to $-$, and $0$ to $0$.   The resulting functor $\phi_\ast$ again turns quantitative information into qualitative information.
\end{eg}

\begin{eg}
\label{Eg:Right_inverses}
The homomorphisms in Examples \ref{Eg:Map_to_terminal_monoid}--\ref{Eg:Map_to_Z/3} all have right inverses. For example, there is a homomorphism $\psi \maps \{+,0,-\} \to \R$ sending $+$ to $1$, $-$ to $-1$ and $0$ to $0$, and this has
\[                  \phi \circ \psi = 1  .\]
The functor $\psi_\ast$ can be used to convert qualititative information about how one vertex directly  affects another into quantitative information in a simple, default manner.  Of course this should be taken with a grain of salt: since $\psi \circ \phi \ne 1$, quantitative information that has been converted into qualitative information cannot be restored.
\end{eg}

Applying the Grothendieck construction to the functor in \cref{Prop:Change_of_labeling_commutative_monoid} we obtain the \define{category of commutative monoid-labeled graphs} $\int F_{\text{cm}}$.

\begin{prop}
\label{Prop:Category_of_commutative_monoid_labeled_graphs}
$\int F_{\mathrm{cm}}$ is the category where:
\begin{itemize}
\item an object is a triple $(C, G, \ell)$ where $C$ is a commutative monoid and $(G,\ell)$ is an $C$-labeled graph;
\item a morphism from $(C, G, \ell)$ to $(C', G', \ell')$ is a pair $(\phi,f)$, where $\phi \maps C \to C'$ is a homomorphism of commutative monoids and $f \maps G \to G'$ is a map of finite graphs such $f_\ast(\phi_\ast \ell)= \ell'$, where $f_\ast$ is defined in Lemma \ref{Lem:Pushforward_along_a_map_of_finite_graphs} and $\phi_\ast$ is defined in Equation (\ref{Eq:Change_of_labeling_commutative_monoid}).
\end{itemize}
\end{prop}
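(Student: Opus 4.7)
The plan is to apply the standard Grothendieck construction to the functor $F_{\mathrm{cm}} \maps \Comm\Mon \to \cat$ of \cref{Prop:Change_of_labeling_commutative_monoid} and then unpack the resulting data in our concrete setting. Recall that for any functor $F \maps \mathsf{C} \to \cat$, the category of elements $\int F$ has as objects pairs $(c, x)$ with $c \in \mathsf{C}$ and $x \in F(c)$, and as morphisms $(c,x) \to (c',x')$ pairs $(\phi, g)$ where $\phi \maps c \to c'$ is a morphism in $\mathsf{C}$ and $g \maps F(\phi)(x) \to x'$ is a morphism in $F(c')$; composition is defined by using functoriality of $F$ to compose the second components.

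Specializing this recipe to $F_{\mathrm{cm}}$, an object is a pair $(C, (G,\ell))$ with $C$ a commutative monoid and $(G,\ell) \in C\Fin\Gph$, which we rewrite as the triple $(C, G, \ell)$. A morphism from $(C, G, \ell)$ to $(C', G', \ell')$ is a pair $(\phi, f)$, where $\phi \maps C \to C'$ is a homomorphism of commutative monoids and $f$ is an additive morphism in $C'\Fin\Gph$ from $\phi_\ast(G,\ell) = (G, \phi \circ \ell)$ to $(G', \ell')$. The main step is then to unfold what this second condition says. By \cref{Defn:Morphism-of-commutative-monoid-labeled-graphs}, $f$ being an additive morphism from $(G, \phi \circ \ell)$ to $(G', \ell')$ amounts to a map of finite graphs $f \maps G \to G'$ satisfying
\[
  \ell'(e') = \sum_{\{e \in E \,:\, f_1(e) = e'\}} (\phi \circ \ell)(e)
\]
for every edge $e' \in E'$. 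By \cref{Lem:Pushforward_along_a_map_of_finite_graphs}, the right-hand side is exactly $\big(f_\ast(\phi \circ \ell)\big)(e') = \big(f_\ast(\phi_\ast \ell)\big)(e')$, so the condition is precisely $f_\ast(\phi_\ast \ell) = \ell'$, matching the statement.

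It remains to check that composition in $\int F_{\mathrm{cm}}$ reduces to the evident composition of pairs $(\phi, f)$. Given $(\phi, f) \maps (C, G, \ell) \to (C', G', \ell')$ and $(\phi', f') \maps (C', G', \ell') \to (C'', G'', \ell'')$, the Grothendieck-construction composite is $(\phi' \circ \phi, f' \circ \phi'_\ast(f))$; but $\phi'_\ast$ is the identity on underlying graph maps (it only changes edge labels via \cref{Eq:Change_of_labeling_commutative_monoid}), so this is simply $(\phi' \circ \phi, f' \circ f)$. The compatibility condition $(f' \circ f)_\ast((\phi' \circ \phi)_\ast \ell) = \ell''$ then follows from the covariant functoriality of pushforward along maps of finite graphs (\cref{Lem:Pushforward_along_a_map_of_finite_graphs}) together with functoriality of the pushforward along monoid homomorphisms (\cref{Prop:Change_of_labeling_commutative_monoid}), namely $(f' \circ f)_\ast = f'_\ast \circ f_\ast$ and $(\phi' \circ \phi)_\ast = \phi'_\ast \circ \phi_\ast$, plus the fact that $f_\ast$ and $\phi'_\ast$ commute (both simply change labels by applying functions, and one can check directly that $f_\ast(\phi'_\ast m) = \phi'_\ast(f_\ast m)$ because $\phi'$ is a monoid homomorphism and thus preserves the finite sums appearing in the definition of $f_\ast$).

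The only mildly nontrivial point, and what I would single out as the main thing to verify carefully, is the commutation $f_\ast \circ \phi'_\ast = \phi'_\ast \circ f_\ast$ used above: this is what guarantees that the concrete category described in the statement actually has a well-defined composition agreeing with the Grothendieck composite. Granting this, the identification of $\int F_{\mathrm{cm}}$ with the claimed category is an equivalence (in fact an isomorphism on the nose, up to the bookkeeping of writing $(C, (G,\ell))$ as $(C, G, \ell)$).
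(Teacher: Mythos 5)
Your proposal is correct and follows exactly the route the paper intends: the paper states this proposition as the direct unpacking of the Grothendieck construction applied to $F_{\mathrm{cm}}$ and offers no separate proof, so your explicit unfolding—including the identification of the additive-morphism condition with $f_\ast(\phi_\ast \ell) = \ell'$ and the verification that $f_\ast$ commutes with $\phi'_\ast$ because monoid homomorphisms preserve the finite sums defining the pushforward—supplies precisely the details the paper leaves implicit.
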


\section{Rig-labeled graphs}
\label{Sec:Rig-labeled_graphs}

We have seen that passing from set-labeled graphs (\cref{Sec:Set-labeled_graphs}) to monoid-labeled graphs (\cref{Sec:Monoid-labeled_graphs}) lets us study of how one entity affects another indirectly through a path of edges, and gives a general way to analyze motifs (\cref{Sec:Motifs}).   When the labeling monoid is commutative, we can also define `additive morphisms' between finite labeled graphs, which can be used to describe ways of simplifying labeled graphs (\cref{Sec:Commutative_monoid-labeled_graphs}). In the commutative case we can also study feedback loops using homology theory (\cref{Sec:Feedback_loops}).

Given all this, it is mathematically tempting to study graphs whose edges are labeled by elements of a rig.  A \define{rig} is a set $R$ with the structure of both a commutative monoid $(R,+,0)$ and a monoid $(R, \cdot, 1)$, obeying 
\[           r \cdot (s + t) = r \cdot s + r \cdot t , \qquad (r + s) \cdot t = r \cdot t + s \cdot t \]
\[            0 \cdot r = 0 = r \cdot 0 \]
for all $r,s,t \in R$.  It has this name because it is a `ring without negatives', or more precisely a ring
that may not have negatives.   The classic example is $\N$ with its usual addition and multiplication.

Despite the mathematically natural quality of rigs, their use of rig elements as polarities seems new and is somewhat speculative.  What is the point of having two operations on the set of labels? 

\begin{eg}
The initial object in the category of rigs is $\N$ with its usual addition and multiplication.  Given an $\N$-labeled graph, we can interpret an edge labeled by $n \in \N$
\[                        v \xrightarrow{n} w \]
as saying there are $n$ ways for $v$ to directly affect $w$.   Given a path in an $\N$-labeled graph, say
\[                      v_0 \xrightarrow{n_1} v_1 \xrightarrow{n_2} v_2 \xrightarrow{n_3} \cdots \xrightarrow{n_k} v_k \]
we can argue that there are $n_1 \cdots n_k$ ways for $v_0$ to affect $v_k$.   This uses multiplication in the rig $\N$.    Note also that using multiplication as the monoid operation we have a Kleisli morphism (\cref{Defn:Kleisli_category}) from 
\[                     v_0 \xrightarrow{n_1 \cdots n_k} v_k\]
to
\[                      v_0 \xrightarrow{n_1} v_1 \xrightarrow{n_2} v_2 \xrightarrow{n_3} \cdots \xrightarrow{n_k} v_k. \]
On the other hand, we can use addition to define additive morphisms (\cref{Defn:Morphism-of-commutative-monoid-labeled-graphs}).  Then there is an additive morphism from
\[
\begin{tikzcd}
v \arrow[rr, "n_1", bend left = 40] \arrow[rr,  "n_2"] \arrow[rr, "n_3"', bend right]  && w
\end{tikzcd}
\]
to
\[
\begin{tikzcd}
v \arrow[rr,  "n_1 + n_2 + n_3"]   && w .
\end{tikzcd}
\]
This is consistent with the idea that there are $n_1 + n_2 + n_3$ ways for $v$ to affect $w$ in the first $\N$-labeled graph, and that the second $\N$-labeled graph presents this information in a simplified manner.
\end{eg}

\begin{eg}
\label{Eg:B}
The boolean rig $\lB = \{F,T\}$ has `or' as addition, `and' as multiplication, $F$ as $0$, and $T$ as $1$.    Equivalently, we can take $\lB = \{0,1\}$ with the following addition and multiplication:
\vskip 0.5em
\renewcommand\arraystretch{1.3}
\setlength\doublerulesep{0pt}
\begin{center}
\begin{tabular}{c||c|c}
+ & 0 & 1 \\
\hline\hline
0 & 0 & 1 \\ 
\hline
1 & 1 & 1 \\ 
\end{tabular}
\qquad  \qquad
\begin{tabular}{c||c|c}
$\cdot$ & 0 & 1 \\
\hline\hline
0 & 0 & 0 \\ 
\hline
1 & 0 & 1 \\ 
\end{tabular}
\end{center}
\vskip 0.5em
We can use:
\begin{itemize}
\item the absence of an edge from $v$ to $w$ to mean that there is \emph{no knowledge of whether} $v$ directly affects $w$,
\item an edge labeled by $0$ from $v$ to $w$ to indicate the \emph{known absence of a direct effect}
of some specific sort of $v$ on $w$, 
\item an edge labeled by $1$ from $v$ to $w$ to indicate that there is \emph{known presence of a direct effect} of some specific sort of $v$ on $w$.
\end{itemize}
For example, 
\[
\begin{tikzcd}[column sep = 1em]
v \arrow[rr, "0", bend left]  \arrow[rr, "1"', bend right]  && w
\end{tikzcd}
\]
means it is known that $v$ does not affect $w$ in one way, but that it does affect it in some other way.  We can simplify this using an additive morphism to
\[
\begin{tikzcd}[column sep = 1em]
v  \arrow[rr, "1"]  && w.
\end{tikzcd}
\]
\end{eg}

\begin{eg}
We can generalize \cref{Eg:B} as follows.  A \define{quantale} is defined to be a poset $Q$ with least upper bounds of arbitrary subsets, equipped with a multiplication $\cdot \maps Q \times Q \to Q$ that preserves least upper bounds in each argument.    A quantale is \define{unital} if it has a unit $1$ for the multiplication.   Any unital quantale becomes a rig where the addition is defined to be the binary join $\vee \maps Q \times Q \to Q$.  

The boolean rig $\lB$ is an example of a unital quantale, but there are many others.  For example, $\lB^n$ is a unitary quantale that allows us to generalize \cref{Eg:B} to a situation where there are $n$ researchers, each with their own research on whether one vertex affects another.  Master has carried out a detailed study of networks using quantale theory \cite{MasterThesis, Master2021}, which overlaps in interesting ways with our work here.   Instead of considering an arbitrary graph with edges labeled by elements of a rig, she considers a set of vertices $V$ together with a map $\ell \maps V \times V \to Q$ for some quantale $Q$.  We can  can think of this as a $Q$-labeled complete directed graph.   In this approach the absence of an effect of $v \in V$ on $w \in V$ is indicated, not by the absence of an edge from $v$ to $w$, but by setting $\ell(v,w) = 0$, where $0 \in Q$ is the bottom element (the least upper bound of the empty set).
\end{eg}

We have seen that the multiplicative monoid of $\Z/3$ is useful for describing positive and negative effects as well as the absence of an effect (\cref{Eg:Z/3}).   While $\Z/3$ becomes a ring with its usual addition, this is problematic in the applications we are considering because $-1 + 1 = 0$, suggesting that a positive effect necessarily cancels a negative effect.   To get around this we can introduce a new element, $i$, for `indeterminate', and set $-1 + 1 = i$.  

\begin{eg}
\label{Eg:S}
There is a 4-element commutative rig $S = \{1,0,-1,i\}$ with addition and multiplication given as follows:
\vskip 0.5em
\renewcommand\arraystretch{1.3}
\setlength\doublerulesep{0pt}
\begin{center}
\begin{tabular}{c||c|c|c|c}
$+$ & $1$ & $0$ & $-1$ & $i$ \\
\hline\hline
$1$ & $1$ & $1$ & $i$ & $i$ \\
\hline
$0$ & $1$ & $0$ & $-1$ & $i$ \\
\hline
$-1$ & $i$ & $-1$ & $-1$ & $i$ \\
\hline
$i$ & $i$ & $i$ & $i$ & $i$ \\
\end{tabular}
\qquad  \qquad
\begin{tabular}{c||c|c|c|c}
$\cdot$ & $1$ & $0$ & $-1$ & $i$ \\
\hline\hline
$1$ & $1$ & $0$ & $-1$ & $i$ \\
\hline
$0$ & $0$ & $0$ & $0$ & $0$ \\
\hline
$-1$ & $-1$ & $0$ & $1$ & $i$ \\
\hline
$i$ & $i$ & $0$ & $i$ & $i$ \\
\end{tabular}
\end{center}
\vskip 0.5em
We can use $1$ to indicate a positive effect, $-1$ to indicate a negative effect, $0$ to indicate the absence of an effect, and $i$ to indicate an indeterminate effect: one that could be positive, negative or absent.   The addition and multiplication rules in this rig capture the following intuitions:
\begin{itemize}
\item 
the sum of positive effects is positive, while the sum of negative effects is negative,
\item 
the sum of a positive and a negative effect is indeterminate,
\item
the product of positive effects is positive, as is the product of negative effects,
\item
the product of a positive and a negative effect is negative,
\item 
any operation applied to an indeterminate effect produces an indeterminate effect, except for multiplication by $0$.
\end{itemize}
To check that $S$ really is a rig, we can use a construction of Golan \cite[Ex.\ 1.10]{Golan1999}.  For any monoid $M$, its  power set $PM$ becomes a rig with union as addition and with multiplication defined by
\[   X \cdot Y = \{ x y \; \vert \; x \in X, y \in Y \}  \]
for $X, Y \in PM$.   The rig $S$ can then be seen as $P(\Z/2)$ using the following identifications, where we write
the group $\Z/2$ multiplicatively as $\{+,-\}$ as in \cref{Eg:Z/2}:
\[              0 = \{ \}, \qquad 1 = \{+\}, \qquad -1 = \{-\}, \qquad i = \{+,-\}   .\] 
\end{eg}

\begin{eg}
We can generalize \cref{Eg:S} as follows.   For any monoid $M$ and any commutative unital quantale $Q$, the set $Q^M$ becomes a rig where addition is defined pointwise:
\[    (f + g)(x) = f(x) \vee g(x)  , \qquad x \in M, \]
where $\vee$ stands for the greatest lower bound, and multiplication is defined by a kind of convolution:
\[     (f \cdot g)(x) = \bigvee_{\{y, z \in M \, \vert \, x = y z \}}  f(y) \cdot g(z) , \qquad x \in M. \]
It is interesting to apply this construction to any quantale $Q$, thought of as describing `generalized truth values', and any monoid $M$ from Examples \ref{Eg:1}--\ref{Eg:semiautomata}.   \cref{Eg:S} arises from taking $Q = \lB$ and $M = \Z/2$.
\end{eg}

Finally, there is the obvious example:

\begin{eg}
The ring $\R$ of real numbers
 with its usual addition and multiplication can be used to describe effects in a quantitative rather than purely qualitative way.
\end{eg}

\section{Open labeled graphs}

Experience has shown that `open' systems---systems that can interact with their environments---are well modeled using cospans \cite{Baez2025,FongThesis}.   A \define{cospan} in some category $\A$ is a diagram of this form:
\[
\begin{tikzpicture}[scale=1]
\node (A) at (0,0) {$A$};
\node (B) at (1,1) {$X$};
\node (C) at (2,0) {$B$};
\path[->,font=\scriptsize,>=angle 90]
(A) edge node[above, pos = 0.3]{$i$} (B)
(C) edge node[above, pos = 0.3]{$o$} (B);
\end{tikzpicture}
\]
We call $X$ the \define{apex}, $A$ and $B$ the \define{feet}, and $i$ and $o$ the \define{legs} of the cospan.   The apex describes the system itself.  The feet describe `interfaces'  through which the system can interact with the outside world.    The legs describe how the interfaces are included in the system.   If the category $\A$ has finite colimits, we can compose cospans using pushouts and tensor them using coproducts.  Composition describes the operation of attaching two open systems together in series by identifying one interface of the first with one of the second.  Tensoring describes setting open systems side by side, in parallel.   

This suggests that cospans in $\A$ are the morphisms in some symmetric monoidal category.  However,  we can also define morphisms \emph{between} cospans, and some of the laws of a symmetric monoidal category hold only up to invertible morphisms of this kind.   Thus, we say cospans in $\A$ are `horizontal 1-cells' in a more elaborate structure, called a `symmetric monoidal double category', in which morphisms between cospans are the `2-morphisms'.    Symmetric monoidal double categories take a while to get used to, but they nicely capture many of the operations on cospans, and the laws they obey.   To learn more about symmetric monoidal double categories, see \cite{CourserThesis, HS, Shulman2008}.

However, we often want to study systems that have more structure than their interfaces.  We can sometimes model these using structured cospans \cite{FiadeiroSchmitt}.   Here we start with a functor $F \maps \A \to \X$, typically the left adjoint of some functor $R \maps \X \to \A$ that forgets extra structure possessed by objects of $\X$ but not by those of $\A$.   Then a \define{$F$-structured cospan} is a diagram in $\X$ of the form:
\[
\begin{tikzpicture}[scale=1.2]
\node (A) at (0,0) {$F(A)$};
\node (B) at (1,1) {$X$};
\node (C) at (2,0) {$F(B)$.};
\path[->,font=\scriptsize,>=angle 90]
(A) edge node[above, pos=0.3]{$i$} (B)
(C)edge node[above, pos=0.3]{$o$}(B);
\end{tikzpicture}
\]
When $\A$ and $\X$ have finite colimits and $F$ preserves them, there is a symmetric monoidal double
category where the horizontal 1-morphisms are $F$-structured cospans \cite{BaezCourser2020,BaezCourserVasilakopoulou2022, CourserThesis, Patterson2023}.

One of the simplest examples of this theory involves open $L$-labeled graphs for some set $L$.  For this we use the adjoint functors
\[
\begin{tikzcd}
            \ \phantom{X} \Set\phantom{X} \arrow[r, bend left = 20, shift left=1ex, "{\disc}"{name=G}] & \ \Gph/G_L \arrow[l, bend left = 20, shift left=.5ex, pos = 0.4, "\Vert"{name=F}]
            \arrow[phantom, from=F, to=G, "\scriptstyle\boldsymbol{\bot}"].
        \end{tikzcd}
    \]
where
\begin{itemize} 
\item $\disc \maps \Set \to \Gph/G_L$ takes a set $S$ to the unique $L$-labeled graph with vertex set 
$S$ and no edges;
\item $ \Vert \maps \Gph/G_L \to \Set$ takes an $L$-labeled graph to its set of vertices.
 \end{itemize}
 
\begin{defn}
\label{Defn:Open_set-labeled_graphs}
Given a set $L$, an \define{open} $L$\define{-labeled graph} is a diagram in $\Gph/G_L$ of the form
\[
\begin{tikzpicture}[scale=1.2]
\node (A) at (0,0) {$\disc(A)$};
\node (B) at (1,1) {$X$};
\node (C) at (2,0) {$\disc(B)$};
\path[->,font=\scriptsize,>=angle 90]
(A) edge node[above, pos=0.3]{$i$} (B)
(C)edge node[above, pos=0.3]{$o$}(B);
\end{tikzpicture}
\]
for some sets $A$ and $B$.   We call this an open $L$-labeled graph \define{from} $A$ \define{to} $B$.
\end{defn}

For example, here is an open $L$-labeled graph with $L = \{+,-\}$:
\[
\begin{tikzpicture}[scale=0.8]
	\begin{pgfonlayer}{nodelayer}
		\node [contact] (n1) at (-2,0) {$\bullet$};
		\node [style = none] at (-2.1,0.3) {$$};
		\node [contact] (n2) at (0,1) {$\bullet$};
		\node [style = none] at (0,1.3) {$$};
		\node [contact] (n3) at (0,-1) {$\bullet$};
		\node [style = none] at (0,-1.3) {$$};
		\node [contact] (n4) at (2,1) {$\bullet$};
		\node [style = none] at (2.1,0.3) {$$};
		\node [contact] (n5) at (2,-1) {$\bullet$};
		\node [style = none] at (2.1,0.3) {$$};

		\node [style = none] at (-1,1.1) {$+$};
		\node [style = none] at (-1,-1.1) {$-$};
		\node [style = none] at (1,1.3) {$+$};
		\node [style = none] at (1,-1.3) {$+$};
	        \node [style = none] at (-0.4,0) {$-$};
	        \node [style = none] at (1.6,0) {$-$};

		\node [style=none] (1) at (-3,0) {};
		\node [style=none] (4) at (3,0) {};

		\node [style=none] (ATL) at (-3.2,1.4) {};
		\node [style=none] (ATR) at (-2.5,1.4) {};
		\node [style=none] (ABR) at (-2.5,-1.4) {};
		\node [style=none] (ABL) at (-3.2,-1.4) {};

		\node [style=none] (X) at (-3,1.8) {$A$};
		\node [style=inputdot] (inI) at (-2.8,0) {};

		\node [style=none] (Z) at (3,1.8) {$B$};
	 	\node [style=inputdot] (outI') at (2.8,1) {};
		 \node [style=inputdot] (outI'') at (2.8,0) {};
	 	\node [style=inputdot] (outI''') at (2.8,-1) {};

		\node [style=none] (MTL) at (2.5,1.4) {};
		\node [style=none] (MTR) at (3.2,1.4) {};
		\node [style=none] (MBR) at (3.2,-1.4) {};
		\node [style=none] (MBL) at (2.5,-1.4) {};
		
		\node[style=none] at (0,1.8) {$X$};

	\end{pgfonlayer}
	\begin{pgfonlayer}{edgelayer}
		\draw [style=inarrow, bend left=20, looseness=1.00] (n1) to (n2);
		\draw [style=inarrow, bend right=20, looseness=1.00] (n1) to (n3);
		\draw [style=inarrow, bend left=0, looseness=1.00] (n2) to (n4);
		\draw [style=inarrow, bend right=0, looseness=1.00] (n3) to (n4);
		\draw [style=inarrow, bend right=0, looseness=1.00] (n5) to (n3);
		\draw [style=inarrow] (n2) to (n3);
		\draw [style=simple] (ATL.center) to (ATR.center);
		\draw [style=simple] (ATR.center) to (ABR.center);
		\draw [style=simple] (ABR.center) to (ABL.center);
		\draw [style=simple] (ABL.center) to (ATL.center);
		\draw [style=simple] (MTL.center) to (MTR.center);
		\draw [style=simple] (MTR.center) to (MBR.center);
		\draw [style=simple] (MBR.center) to (MBL.center);
		\draw [style=simple] (MBL.center) to (MTL.center);
		\draw [style=inputarrow] (inI) to (n1);
		\draw [style=inputarrow] (outI') to (n4);
		\draw [style=inputarrow] (outI'') to (n5);
		\draw [style=inputarrow] (outI''') to (n5);
	\end{pgfonlayer}
\end{tikzpicture}
\]
$A$ and $B$ are sets, which we can think of as graphs with no edges using the functor $\disc$.  The purple arrows show how $\disc(A)$ and $\disc(B)$ are mapped to the $L$-graph $X$ drawn in black.   

The main point of open $L$-labeled graphs is that we can compose them to form bigger ones.    For example, if we compose the above open $L$-labeled graph from $A$ to $B$ with this $L$-labeled graph from $B$ to $C$:
\[   \begin{tikzpicture}[scale=0.8]
	\begin{pgfonlayer}{nodelayer}
		\node [contact] (n6) at (3.7,1) {$\bullet$};
		\node [style = none] at (3.6,0.3) {$$};
		\node [contact] (n7) at (3.7,0) {$\bullet$};
		\node [style = none] at (3.7,1.3) {$$};
		\node [contact] (n8) at (3.7,-1) {$\bullet$};
		\node [style = none] at (5.7,1.3) {$$};
		\node [contact] (n9) at (5.7,0) {$\bullet$};
 
		\node [style = none] at (4.7,1.1) {$+$};
		\node [style = none] at (4.7,0.2) {$-$};
		\node [style = none] at (4.7,-1.1) {$+$};
 
		\node [style=none] (1) at (1.7,0) {};
		\node [style=none] (4) at (7.7,0) {};
 
		\node [style=none] (ATL) at (2.5,1.4) {};
		\node [style=none] (ATR) at (3.2,1.4) {};
		\node [style=none] (ABR) at (3.2,-1.4) {};
		\node [style=none] (ABL) at (2.5,-1.4) {};
 
		\node [style=none] (X) at (2.7,1.8) {$B$};
	 	\node [style=inputdot] (inI') at (2.9,1) {};
		 \node [style=inputdot] (inI'') at (2.9,0) {};
	 	\node [style=inputdot] (inI''') at (2.9,-1) {};
 
		\node [style=none] (Z) at (6.7,1.8) {$C$};
 
		\node [style=none] (MTL) at (6.2,1.4) {};
		\node [style=none] (MTR) at (6.9,1.4) {};
		\node [style=none] (MBR) at (6.9,-1.4) {};
		\node [style=none] (MBL) at (6.2,-1.4) {};
		
		\node [style=inputdot] (outI) at (6.5,0) {};
		
		\node[style=none] at (4.7,1.8) {$Y$};
 
	\end{pgfonlayer}
	\begin{pgfonlayer}{edgelayer}
		\draw [style=inarrow, bend left=20, looseness=1.00] (n6) to (n9);
		\draw [style=inarrow] (n9) to (n7);
		\draw [style=inarrow, bend right=20, looseness=1.00] (n8) to (n9);
 
		\draw [style=simple] (ATL.center) to (ATR.center);
		\draw [style=simple] (ATR.center) to (ABR.center);
		\draw [style=simple] (ABR.center) to (ABL.center);
		\draw [style=simple] (ABL.center) to (ATL.center);
		\draw [style=simple] (MTL.center) to (MTR.center);
		\draw [style=simple] (MTR.center) to (MBR.center);
		\draw [style=simple] (MBR.center) to (MBL.center);
		\draw [style=simple] (MBL.center) to (MTL.center);
 
		\draw [style=inputarrow] (inI') to (n6);
		\draw [style=inputarrow] (inI'') to (n7);
		\draw [style=inputarrow] (inI''') to (n8);
		\draw [style=inputarrow] (outI) to (n9);
	\end{pgfonlayer}
\end{tikzpicture}
\]
we get this open $L$-labeled graph from $A$ to $C$:
\[
\begin{tikzpicture}[scale=0.8]
	\begin{pgfonlayer}{nodelayer}
		\node [contact] (n1) at (-2,0) {$\bullet$};
		\node [style = none] at (-2.1,0.3) {$$};
		\node [contact] (n2) at (0,1) {$\bullet$};
		\node [style = none] at (0,1.3) {$$};
		\node [contact] (n3) at (0,-1) {$\bullet$};
		\node [style = none] at (0,-1.3) {$$};
		\node [contact] (n4) at (2,1) {$\bullet$};
		\node [style = none] at (2.1,0.3) {$$};
		\node [contact] (n5) at (2,-1) {$\bullet$};
		\node [style = none] at (2.1,0.3) {$$};

		\node [style = none] at (-1,1.1) {$+$};
		\node [style = none] at (-1,-1.1) {$-$};
		\node [style = none] at (1,1.3) {$+$};
		\node [style = none] at (1,-1.3) {$+$};
	        \node [style = none] at (-0.4,0) {$-$};
	        \node [style = none] at (1.6,0) {$-$};

		\node [style=none] (1) at (-3,0) {};
		\node [style=none] (4) at (3,0) {};

		\node [style=none] (ATL) at (-3.2,1.4) {};
		\node [style=none] (ATR) at (-2.5,1.4) {};
		\node [style=none] (ABR) at (-2.5,-1.4) {};
		\node [style=none] (ABL) at (-3.2,-1.4) {};

		\node [style=none]  at (-3,1.8) {$A$};
		\node [style=inputdot] (inI) at (-2.8,0) {};
		
		\node[style=none] at (0.8,1.8) {$X+_{\disc(B)} Y$};

	\end{pgfonlayer}
	\begin{pgfonlayer}{edgelayer}
		\draw [style=inarrow, bend left=20, looseness=1.00] (n1) to (n2);
		\draw [style=inarrow, bend right=20, looseness=1.00] (n1) to (n3);
		\draw [style=inarrow, bend left=0, looseness=1.00] (n2) to (n4);
		\draw [style=inarrow, bend right=0, looseness=1.00] (n3) to (n4);
		\draw [style=inarrow, bend right=0, looseness=1.00] (n5) to (n3);
		\draw [style=inarrow] (n2) to (n3);
		\draw [style=simple] (ATL.center) to (ATR.center);
		\draw [style=simple] (ATR.center) to (ABR.center);
		\draw [style=simple] (ABR.center) to (ABL.center);
		\draw [style=simple] (ABL.center) to (ATL.center);

		\draw [style=inputarrow] (inI) to (n1);
	\end{pgfonlayer}
	
		\begin{pgfonlayer}{nodelayer}
		\node [contact] (n6) at (2,1) {$\bullet$};
		\node [style = none] at (2.1,0.3) {$$};
		\node [contact] (n8) at (2,-1) {$\bullet$};
		\node [style = none] at (4,1.3) {$$};
		\node [contact] (n9) at (4,0) {$\bullet$};
 
		\node [style = none] at (3,1.1) {$+$};
		\node [style = none] at (3,0) {$-$};
		\node [style = none] at (3,-1.1) {$+$};

		\node [style=none] (Z) at (5,1.8) {$C$};
 
		\node [style=none] (MTL) at (4.5,1.4) {};
		\node [style=none] (MTR) at (5.2,1.4) {};
		\node [style=none] (MBR) at (5.2,-1.4) {};
		\node [style=none] (MBL) at (4.5,-1.4) {};
		
		\node [style=inputdot] (outI) at (4.8,0) {};

	\end{pgfonlayer}
	\begin{pgfonlayer}{edgelayer}
		\draw [style=inarrow, bend left=20, looseness=1.00] (n6) to (n9);
		\draw [style=inarrow, bend right=20, looseness=1.00] (n9) to (n8);
		\draw [style=inarrow, bend right=20, looseness=1.00] (n8) to (n9);
 
		\draw [style=simple] (MTL.center) to (MTR.center);
		\draw [style=simple] (MTR.center) to (MBR.center);
		\draw [style=simple] (MBR.center) to (MBL.center);
		\draw [style=simple] (MBL.center) to (MTL.center);

		\draw [style=inputarrow] (outI) to (n9);
	\end{pgfonlayer}
\end{tikzpicture}
\]
Besides being able to compose open $L$-labeled graphs we can also tensor them, i.e., put them in parallel, using coproducts.   The upshot is that open $L$-labeled graphs form a symmetric monoidal double category.   This is captured in the following already known result:

\begin{thm}
\label{Thm:Open_set-labeled_graphs}
For any set $L$, there is a symmetric monoidal double category of open $L$-labeled graphs, $\lOpen(\Gph/G_L)$, in which
\begin{itemize}
\item an object is a set,
\item a vertical 1-morphism from $A$ to $B$ is a function $f \maps A \to B$,
\item a horizontal 1-cell from $A$ to $B$ is an open $L$-labeled graph from $A$ to $B$:
\begin{displaymath}
\begin{tikzpicture}[scale=1.5]
\node (A) at (0,0) {$\disc(A)$};
\node (B) at (1,0) {$X$};
\node (C) at (2,0) {$\disc(B)$,};
\path[->,font=\scriptsize,>=angle 90]
(A) edge node[above]{$i$} (B)
(C) edge node[above]{$o$} (B);
\end{tikzpicture}
\end{displaymath}
\item a 2-morphism is a \define{map of open $L$-labeled graphs}, that is, a commutative diagram in $\Gph/G_L$ of the form
\begin{displaymath}
\begin{tikzpicture}[scale=1.5]
\node (A) at (-0.5,0) {$\disc(A)$};
\node (B) at (1,0) {$X$};
\node (C) at (2.5,0) {$\disc(B)$};
\node (A') at (-0.5,-1) {$\disc(A')$};
\node (B') at (1,-1) {$X'$};
\node (C') at (2.5,-1) {$\disc(B')$.};
\path[->,font=\scriptsize,>=angle 90]
(A) edge node[above]{$i$} (B)
(C) edge node[above]{$o$} (B)
(A') edge node[below]{$i'$} (B')
(C') edge node[below]{$o'$} (B')
(A) edge node [left]{$\disc(f)$} (A')
(B) edge node [left]{$\alpha$} (B')
(C) edge node [right]{$\disc(g)$} (C');
\end{tikzpicture}
\end{displaymath}
\end{itemize}
Vertical composition is done using composition in $\Set$, while horizontal composition
is done using pushouts in $\Gph/G_L$. The tensor product of two open $L$-labeled graphs is
\[
\begin{tikzcd}[column sep={.4in,between origins}, row sep=.08in]
& X &&&& X' &&&& X+X' & \\
&&&\otimes&&&& = &&&& \\
\disc(A)\ar[uur,"i" pos=0.3]&& \disc(B)\ar[uul,"o"' pos=0.3] && \disc(A')\ar[uur,"i'" pos=0.3] && \disc(B')\ar[uul,"o'"' pos=0.3] && \disc(A+A')\ar[uur,"i+i'" pos=0.3] && \disc(B+B')\ar[uul,"o+o'"' pos=0.3]
\end{tikzcd}
\]
where $i + i'$ and $o + o'$ are defined using the fact that $\disc$ preserves binary coproducts, and the tensor product of two 2-morphisms is given by:
\[
\begin{tikzpicture}[scale=1.5]
\node (E) at (2,0) {$\disc(A_1)$};
\node (F) at (4,0) {$\disc(B_1)$};
\node (G) at (3,0) {$X_1$};
\node (E') at (2,-1) {$\disc(A_2)$};
\node (F') at (4,-1) {$\disc(B_2)$};
\node (G') at (3,-1) {$X_2$};
\node (E'') at (6.5,0) {$\disc(A_1')$};
\node (F'') at (8.5,0) {$\disc(B_1')$};
\node (G'') at (7.5,0) {$X_1'$};
\node (E''') at (6.5,-1) {$\disc(A_2')$};
\node (F''') at (8.5,-1) {$\disc(B_2')$};
\node (G''') at (7.5,-1) {$X_2'$};
\node (X) at (5.25,-0.5) {$\otimes$};
\node (E'''') at (3.5,-2) {$\disc(A_1 + A_1')$};
\node (F'''') at (7,-2) {$\disc(B_1 + B_1')$};
\node (G'''') at (5.25,-2) {$X_1 + X_1'$};
\node (E''''') at (3.5,-3) {$\disc(A_2 + A_2')$};
\node (F''''') at (7,-3) {$\disc(B_2 + B_2').$};
\node (G''''') at (5.25,-3) {$X_2 + X_2'$};
\node (Y) at (2,-2.5) {$=$};
\path[->,font=\scriptsize,>=angle 90]
(F) edge node[above]{$o_1$} (G)
(E) edge node[left]{$\disc(f)$} (E')
(F) edge node[right]{$\disc(g)$} (F')
(G) edge node[left]{$\alpha$} (G')
(E) edge node[above]{$i_1$} (G)
(E') edge node[below]{$i_2$} (G')
(F') edge node[below]{$o_2$} (G')
(F'') edge node[above]{$o_1'$} (G'')
(E'') edge node[left]{$\disc(f')$} (E''')
(F'') edge node[right]{$\disc(g')$} (F''')
(G'') edge node[left]{$\alpha'$} (G''')
(E'') edge node[above]{$i_1'$} (G'')
(E''') edge node[below]{$i_2'$} (G''')
(F''') edge node[below]{$o_2'$} (G''')
(F'''') edge node[above]{$o_1 + o_1'$} (G'''')
(E'''') edge node[left]{$\disc(f + f')$} (E''''')
(F'''') edge node[right]{$\disc(g + g')$} (F''''')
(G'''') edge node[left]{$\alpha + \alpha'$} (G''''')
(E'''') edge node[above]{$i_1 + i_1'$} (G'''')
(E''''') edge node[below]{$i_2 + i_2'$} (G''''')
(F''''') edge node[below]{$o_2 + o_2'$} (G''''');
\end{tikzpicture}
\]
\end{thm}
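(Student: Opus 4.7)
The plan is to identify $\lOpen(\Gph/G_L)$ with the symmetric monoidal double category of $\disc$-structured cospans produced by the general structured cospans construction of Baez--Courser \cite{BaezCourser2020, CourserThesis, BaezCourserVasilakopoulou2022}. That construction takes as input a finite-colimit-preserving functor $F \maps \A \to \X$ between categories with finite colimits, and returns a symmetric monoidal double category whose objects, vertical 1-morphisms, horizontal 1-cells, and 2-morphisms are exactly those displayed in the statement of the theorem, with horizontal composition defined by pushouts in $\X$ and tensor product defined by coproducts in $\A$ and $\X$.

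The first step is to verify the input hypotheses for this construction applied to $\disc \maps \Set \to \Gph/G_L$. The category $\Set$ is cocomplete, and by \cref{Prop:Set-labeled_graph}(a) the category $\Gph/G_L$ is a presheaf topos, hence also cocomplete. So both categories admit finite colimits. Moreover, $\disc$ is left adjoint to the vertex functor $\Vert \maps \Gph/G_L \to \Set$, so as a left adjoint it preserves all small colimits, in particular pushouts and binary coproducts. These are exactly the hypotheses needed.

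The second step is to invoke the general theorem and match up notation. Horizontal composition of the open $L$-labeled graphs $\disc(A) \to X \leftarrow \disc(B)$ and $\disc(B) \to Y \leftarrow \disc(C)$ is the structured cospan $\disc(A) \to X +_{\disc(B)} Y \leftarrow \disc(C)$, computed using a pushout in $\Gph/G_L$. The monoidal product on horizontal 1-cells uses the fact that $\disc$ preserves binary coproducts to produce the maps $i+i'$ and $o+o'$ displayed in the statement, and the monoidal product on 2-morphisms is then the evident coproduct in the arrow category. The unit object is the empty set, and the symmetric braiding comes from the symmetry of the coproduct in $\Set$.

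The main obstacle, which is really bookkeeping rather than substantive mathematics, is checking that the data assembled from the general theorem agrees on the nose with the formulas displayed in the statement, and that the coherence axioms (associators, unitors, interchange, the various pentagon and hexagon identities for the symmetric monoidal structure) follow from the universal properties of pushouts and coproducts. All of this is handled once and for all in the cited references, so no novel argument is required beyond the verification of hypotheses above.
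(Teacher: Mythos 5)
Your proposal is correct and matches the paper's proof: both verify that $\Set$ and $\Gph/G_L$ have finite colimits (citing \cref{Prop:Set-labeled_graph}(a)) and that $\disc$ preserves them as a left adjoint, then invoke the general structured cospans construction of Baez--Courser and its variants. The only difference is that you spell out the notational matching in more detail, while the paper simply defers to the cited references.
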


\begin{proof}
The categories $\Set$ and $\Gph/G_L$ have finite colimits (see \cref{Prop:Set-labeled_graph}(a)) and $\disc \maps \Set \to \Gph/G_L$ preserves them, since it is a left adjoint.   The theorem thus follows from the theory of structured cospans: see   \cite[Sec. 6.1]{BaezCourser2020} and \cite[Sec. 6.2]{BaezCourserVasilakopoulou2022} for two different proofs.   Using the same assumptions,  Theorem 2.3 of \cite{Patterson2023} yields a stronger result: $\lOpen(\Gph/ G_L)$ is a cocartesian equipment, and thus a cocartesian double category.
\end{proof}

If $M$ is a monoid we can define open $M$-labeled graphs as in \cref{Defn:Open_set-labeled_graphs}, not using the monoid structure, only the underlying set of $M$.  However, we have the extra ability to convert any open $M$-labeled graph into an `open $M$-graded category', which we now define.   First, note that the categories of sets and $M$-labeled categories are related by a pair of adjoint functors
\[
\begin{tikzcd}
            \ \phantom{X} \Set\phantom{X} \arrow[r, bend left = 20, shift left=1ex, "{\Disc}"{name=G}] & \ \cat/\B M \arrow[l, bend left = 20, shift left=.5ex, pos = 0.4, "\mathrm{Vert}"{name=F}]
            \arrow[phantom, from=F, to=G, "\scriptstyle\boldsymbol{\bot}"].
        \end{tikzcd}
    \]
where
\begin{itemize}
\item $\Disc \maps \Set \to \cat/\B M$ takes a set $S$ to the unique $M$-graded category with object set $S$ and only identity morphisms;
\item $\mathrm{Vert} \maps \cat/\B M \to \Set$ takes an $M$-graded category to its set of objects.
\end{itemize}
We can apply the theory of structured cospans to the left adjoint $\Disc$, as follows. 

\begin{defn}
\label{Defn:Open_monoid-labeled_category}
Given a monoid $M$, an \define{open} $M$\define{-graded category} is a diagram in $\cat/\B M$ of the form
\[
\begin{tikzpicture}[scale=1.2]
\node (A) at (0,0) {$\Disc(A)$};
\node (B) at (1,1) {$X$};
\node (C) at (2,0) {$\Disc(B)$};
\path[->,font=\scriptsize,>=angle 90]
(A) edge node[above, pos=0.3]{$i$} (B)
(C)edge node[above, pos=0.3]{$o$}(B);
\end{tikzpicture}
\]
for some sets $A$ and $B$.   We call this an open $M$-graded category \define{from} $A$ \define{to} $B$.
\end{defn}

\begin{thm}
\label{Thm:Open_monoid-labeled_categories}
For any monoid $M$, there is a symmetric monoidal double category of open $M$-labeled categories,
$\lOpen(\cat/\B M)$, in which
\begin{itemize}
\item an object is a set,
\item a vertical 1-morphism from $A$ to $B$ is a function $f \maps A \to B$,
\item a horizontal 1-cell from $A$ to $B$ is an open $M$-graded category  from $A$ to $B$:
\begin{displaymath}
\begin{tikzpicture}[scale=1.5]
\node (A) at (0,0) {$\Disc(A)$};
\node (B) at (1,0) {$X$};
\node (C) at (2,0) {$\Disc(B)$,};
\path[->,font=\scriptsize,>=angle 90]
(A) edge node[above]{$i$} (B)
(C) edge node[above]{$o$} (B);
\end{tikzpicture}
\end{displaymath}
\item a 2-morphism is a \define{map of open} $M$\define{-graded categories}, that is, a commutative diagram in $\cat/\B M$ of the form
\begin{displaymath}
\begin{tikzpicture}[scale=1.5]
\node (A) at (-0.5,0) {$\Disc(A)$};
\node (B) at (1,0) {$X$};
\node (C) at (2.5,0) {$\Disc(B)$};
\node (A') at (-0.5,-1) {$\Disc(A')$};
\node (B') at (1,-1) {$X'$};
\node (C') at (2.5,-1) {$\Disc(B')$.};
\path[->,font=\scriptsize,>=angle 90]
(A) edge node[above]{$i$} (B)
(C) edge node[above]{$o$} (B)
(A') edge node[below]{$i'$} (B')
(C') edge node[below]{$o'$} (B')
(A) edge node [left]{$\Disc(f)$} (A')
(B) edge node [left]{$\alpha$} (B')
(C) edge node [right]{$\Disc(g)$} (C');
\end{tikzpicture}
\end{displaymath}
\end{itemize}
Vertical composition is done using composition in $\Set$, while horizontal composition
is done using pushouts in $\cat/ \B M$. The tensor product of two horizontal 1-cells is
\[
\begin{tikzcd}[column sep={.4in,between origins}, row sep=.08in]
& X &&&& X' &&&& X+X' & \\
&&&\otimes&&&& = &&&& \\
\Disc(A)\ar[uur,"i" pos=0.3]&& \Disc(B)\ar[uul,"o"' pos=0.3] && \Disc(A')\ar[uur,"i'" pos=0.3] && \Disc(B')\ar[uul,"o'"' pos=0.3] && \Disc(A+A')\ar[uur,"i+i'" pos=0.3] && \Disc(B+B')\ar[uul,"o+o'"' pos=0.3]
\end{tikzcd}
\]
where $i + i'$ and $o + o'$ are defined using the fact that $\Disc$ preserves binary coproducts, and the tensor product of two 2-morphisms is given by:
\[
\begin{tikzpicture}[scale=1.5]
\node (E) at (2,0) {$\Disc(A_1)$};
\node (F) at (4,0) {$\Disc(B_1)$};
\node (G) at (3,0) {$X_1$};
\node (E') at (2,-1) {$\Disc(A_2)$};
\node (F') at (4,-1) {$\Disc(B_2)$};
\node (G') at (3,-1) {$X_2$};
\node (E'') at (6.5,0) {$\Disc(A_1')$};
\node (F'') at (8.5,0) {$\Disc(B_1')$};
\node (G'') at (7.5,0) {$X_1'$};
\node (E''') at (6.5,-1) {$\Disc(A_2')$};
\node (F''') at (8.5,-1) {$\Disc(B_2')$};
\node (G''') at (7.5,-1) {$X_2'$};
\node (X) at (5.25,-0.5) {$\otimes$};
\node (E'''') at (3.5,-2) {$\Disc(A_1 + A_1')$};
\node (F'''') at (7,-2) {$\Disc(B_1 + B_1')$};
\node (G'''') at (5.25,-2) {$X_1 + X_1'$};
\node (E''''') at (3.5,-3) {$\Disc(A_2 + A_2')$};
\node (F''''') at (7,-3) {$\Disc(B_2 + B_2').$};
\node (G''''') at (5.25,-3) {$X_2 + X_2'$};
\node (Y) at (2,-2.5) {$=$};
\path[->,font=\scriptsize,>=angle 90]
(F) edge node[above]{$o_1$} (G)
(E) edge node[left]{$\Disc(f)$} (E')
(F) edge node[right]{$\Disc(g)$} (F')
(G) edge node[left]{$\alpha$} (G')
(E) edge node[above]{$i_1$} (G)
(E') edge node[below]{$i_2$} (G')
(F') edge node[below]{$o_2$} (G')
(F'') edge node[above]{$o_1'$} (G'')
(E'') edge node[left]{$\Disc(f')$} (E''')
(F'') edge node[right]{$\Disc(g')$} (F''')
(G'') edge node[left]{$\alpha'$} (G''')
(E'') edge node[above]{$i_1'$} (G'')
(E''') edge node[below]{$i_2'$} (G''')
(F''') edge node[below]{$o_2'$} (G''')
(F'''') edge node[above]{$o_1 + o_1'$} (G'''')
(E'''') edge node[left]{$\Disc(f + f')$} (E''''')
(F'''') edge node[right]{$\Disc(g + g')$} (F''''')
(G'''') edge node[left]{$\alpha + \alpha'$} (G''''')
(E'''') edge node[above]{$i_1 + i_1'$} (G'''')
(E''''') edge node[below]{$i_2 + i_2'$} (G''''')
(F''''') edge node[below]{$o_2 + o_2'$} (G''''');
\end{tikzpicture}
\]
\end{thm}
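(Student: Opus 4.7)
The proof plan is to mirror the strategy used for Theorem \ref{Thm:Open_set-labeled_graphs}, simply replacing the adjunction $\disc \dashv \Vert$ between $\Set$ and $\Gph/G_L$ by the adjunction $\Disc \dashv \mathrm{Vert}$ between $\Set$ and $\cat/\B M$ described just before the theorem. Indeed, the whole construction of the open-$L$-labeled-graph double category only used formal properties of the left adjoint $\disc$: that its source and target categories have finite colimits and that $\disc$ preserves them. So the task reduces to verifying these two hypotheses for $\Disc$ and then invoking the structured cospans machinery.

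First I would check that the base and total categories have finite colimits. The category $\Set$ trivially does. For $\cat/\B M$, I would invoke \cref{Prop:Monoid-labeled_category}(a), which says $\cat/\B M$ is locally finitely presentable; locally finitely presentable categories are complete and cocomplete, so in particular $\cat/\B M$ has all pushouts and all finite coproducts. Next I would observe that $\Disc \maps \Set \to \cat/\B M$ is a left adjoint (to $\mathrm{Vert}$), so it preserves all colimits that exist in $\Set$, in particular finite ones. In particular, $\Disc(A + B) \cong \Disc(A) + \Disc(B)$, which justifies the formulas defining the tensor product of horizontal 1-cells and 2-morphisms given in the statement.

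With these two facts in hand, the theorem follows immediately from the structured cospans machinery applied to $\Disc \maps \Set \to \cat/\B M$: see \cite[Sec.~6.1]{BaezCourser2020} and \cite[Sec.~6.2]{BaezCourserVasilakopoulou2022} for the construction of the symmetric monoidal double category of $F$-structured cospans whenever $F$ is a finite-colimit-preserving functor between finitely cocomplete categories, and \cite[Thm.~2.3]{Patterson2023} for the stronger fact that $\lOpen(\cat/\B M)$ is a cocartesian equipment, and thus in particular a cocartesian double category. Unwinding the general definitions with $F = \Disc$ recovers precisely the description of objects, vertical 1-morphisms, horizontal 1-cells, 2-morphisms, vertical and horizontal composition, and the tensor product given in the statement.

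I do not anticipate a serious obstacle: the only nontrivial input is the cocompleteness of $\cat/\B M$, which we have already secured via local finite presentability in \cref{Prop:Monoid-labeled_category}. The rest is a direct transcription of the earlier proof of \cref{Thm:Open_set-labeled_graphs}, with the only substantive change being that pushouts are now taken in $\cat/\B M$ rather than in $\Gph/G_L$. It may be worth noting explicitly that because $\Disc$ and $\disc$ are instances of the same structured-cospans pattern, the current theorem and the earlier one are two parallel applications of one general recipe, and no new category-theoretic ingredient is needed.
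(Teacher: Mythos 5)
Your proposal is correct and follows essentially the same route as the paper's own proof: establish finite colimits in $\Set$ and $\cat/\B M$ via \cref{Prop:Monoid-labeled_category}(a), note that $\Disc$ preserves them because it is a left adjoint, and then invoke the structured cospans machinery of \cite{BaezCourser2020,BaezCourserVasilakopoulou2022} together with \cite[Thm.~2.3]{Patterson2023}. No substantive differences.
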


\begin{proof}
The categories $\Set$ and $\Gph/\B M$ have finite colimits (see \cref{Prop:Monoid-labeled_category}(a)) and $\Disc \maps \Set \to \Gph/\B M$ preserves them, since it is a left adjoint.  The theory of structured cospans thus implies that $\lOpen(\Gph/G_L)$ is a symmetric monoidal double category as above.  In fact \cite[Thm.\ 2.3]{Patterson2023} implies that $\lOpen(\Gph/ G_L)$ is a cocartesian equipment.
\end{proof}

Now we construct a map that turns open $M$-labeled graphs into open $M$-graded categories.
To do this we use \cref{Lem:Free_monoid-labeled_category}, which says that there is an adjunction
\[
\begin{tikzcd}
            \ \Gph/G_M\arrow[r, bend left = 20, shift left=1ex, "\Free_M"{name=G}] & \ \cat/\B M \arrow[l, bend left = 20, shift left=.5ex, "\Und_M"{name=F}]
            \arrow[phantom, from=F, to=G, "\scriptstyle\boldsymbol{\bot}"].
        \end{tikzcd}
\]

\begin{thm}
\label{Thm:Converting_open_graphs_to_open_categories}
For any monoid $M$ there is a symmetric monoidal double functor 
\[     F_M \maps \lOpen(\Gph/G_M) \to \lOpen(\cat/\B M) \]
acting as follows:
\begin{itemize}
\item on objects, $F_M$ sends any set to itself,
\item on vertical 1-morphisms, $F_M$ sends any function to itself
\item on horizontal 1-cells, $F_M$ sends any open $M$-labeled graph
\begin{displaymath}
\begin{tikzpicture}[scale=1.5]
\node (A) at (0,0) {$\disc(A)$};
\node (B) at (1,0) {$X$};
\node (C) at (2,0) {$\disc(B)$,};
\path[->,font=\scriptsize,>=angle 90]
(A) edge node[above]{$i$} (B)
(C) edge node[above]{$o$} (B);
\end{tikzpicture}
\end{displaymath}
to the open $M$-graded category
\begin{displaymath}
\begin{tikzpicture}[scale=1.5]
\node (A) at (0,0) {$\Disc(A)$};
\node (B) at (1.5,0) {$\Free_M(X)$};
\node (C) at (3,0) {$\Disc(B)$,};
\path[->,font=\scriptsize,>=angle 90]
(A) edge node[above]{$i$} (B)
(C) edge node[above]{$o$} (B);
\end{tikzpicture}
\end{displaymath}
\item on 2-morphisms, $F_M$ sends any map of open $M$-labeled graphs
\begin{displaymath}
\begin{tikzpicture}[scale=1.5]
\node (A) at (-0.5,0) {$\disc(A)$};
\node (B) at (1,0) {$X$};
\node (C) at (2.5,0) {$\disc(B)$};
\node (A') at (-0.5,-1) {$\disc(A')$};
\node (B') at (1,-1) {$X'$};
\node (C') at (2.5,-1) {$\disc(B')$.};
\path[->,font=\scriptsize,>=angle 90]
(A) edge node[above]{$i$} (B)
(C) edge node[above]{$o$} (B)
(A') edge node[below]{$i'$} (B')
(C') edge node[below]{$o'$} (B')
(A) edge node [left]{$\disc(f)$} (A')
(B) edge node [left]{$\alpha$} (B')
(C) edge node [right]{$\disc(g)$} (C');
\end{tikzpicture}
\end{displaymath}
to the map of open $M$-graded categories
\begin{displaymath}
\begin{tikzpicture}[scale=1.5]
\node (A) at (-0.5,0) {$\Disc(A)$};
\node (B) at (1,0) {$\Free_M(X)$};
\node (C) at (2.5,0) {$\Disc(B)$};
\node (A') at (-0.5,-1) {$\Disc(A')$};
\node (B') at (1,-1) {$\Free_M(X')$};
\node (C') at (2.5,-1) {$\Disc(B')$.};
\path[->,font=\scriptsize,>=angle 90]
(A) edge node[above]{$i$} (B)
(C) edge node[above]{$o$} (B)
(A') edge node[below]{$i'$} (B')
(C') edge node[below]{$o'$} (B')
(A) edge node [left]{$\Disc(f)$} (A')
(B) edge node [left]{$\Free_M(\alpha)$} (B')
(C) edge node [right]{$\Disc(g)$} (C');
\end{tikzpicture}
\end{displaymath}
\end{itemize}
\end{thm}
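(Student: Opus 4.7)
The plan is to exploit the adjunction of \cref{Lem:Free_monoid-labeled_category} together with the naturality of the structured-cospan construction used in the proofs of \cref{Thm:Open_set-labeled_graphs} and \cref{Thm:Open_monoid-labeled_categories}. The key preliminary observation is that there is a natural isomorphism
\[ \Free_M \circ \disc \;\cong\; \Disc \]
of functors from $\Set$ to $\cat/\B M$: applied to an $M$-labeled graph with vertex set $S$ and no edges, $\Free_M$ yields the free $M$-graded category on this graph, whose only morphisms are the empty paths, i.e.\ identities; this is exactly $\Disc(S)$. Moreover, since $\Free_M$ is a left adjoint, it preserves all colimits, and in particular it preserves the pushouts and binary coproducts used to define horizontal composition and the monoidal product in $\lOpen(\Gph/G_M)$.

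Given these two facts, I would define $F_M$ on cells exactly as in the statement and then verify the axioms of a symmetric monoidal double functor step by step. First, strict functoriality on the vertical category of sets and functions is immediate, because $F_M$ acts as the identity there. Second, on 2-morphisms $F_M$ is well defined because $\Free_M(\alpha)$ is a functor over $\B M$ (by \cref{Lem:Free_monoid-labeled_category}) that restricts to $\Disc(f)$ and $\Disc(g)$ on the discrete feet, using the natural isomorphism above. Third, horizontal composition is preserved because the horizontal composite of two structured cospans in $\lOpen(\cat/\B M)$ is computed as a pushout in $\cat/\B M$, and $\Free_M$ applied to the pushout of two cospans in $\Gph/G_M$ agrees with this, up to a canonical isomorphism coming from the universal property of the pushout. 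Fourth, horizontal identities are preserved by the displayed natural isomorphism, so the unitor cells are canonical. Finally, the monoidal product and symmetry isomorphisms transport to $\lOpen(\cat/\B M)$ because $\Free_M$ preserves binary coproducts and the symmetry of the coproduct is a universal construction.

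The cleanest way to package all of this is to invoke the general principle that a colimit-preserving functor $H \maps \X \to \Y$ together with a natural isomorphism $H \circ F \xrightarrow{\sim} G$, for colimit-preserving $F \maps \A \to \X$ and $G \maps \A \to \Y$, induces a symmetric monoidal (pseudo) double functor between the associated structured-cospan double categories; this principle is essentially already present in \cite[Sec.\ 6.1]{BaezCourser2020} and \cite[Sec.\ 6.2]{BaezCourserVasilakopoulou2022}, and in our case we take $\A = \Set$, $\X = \Gph/G_M$, $\Y = \cat/\B M$, $F = \disc$, $G = \Disc$, and $H = \Free_M$. The main obstacle is not any single step but merely the bookkeeping of coherence isomorphisms; all of these cells are forced by the universal properties of pushouts and coproducts, and the uniqueness clauses in those universal properties then make all coherence diagrams commute automatically.
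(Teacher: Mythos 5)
Your proposal is correct and follows essentially the same route as the paper: the paper's proof likewise reduces everything to checking that $\Free_M \circ \disc \cong \Disc$ (the triangle over $\Set$ commutes up to natural isomorphism) and that all functors involved preserve finite colimits because they are left adjoints, then invokes the general machinery for maps between structured cospan double categories (specifically Theorems 4.2 and 4.3 of \cite{BaezCourser2020}). Your identification of those two key facts is exactly what the paper's proof rests on.
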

        
\begin{proof} 
Theorems 4.2 and 4.3 of \cite{BaezCourser2020} give a general way to construct maps between structured cospan double categories.   The present theorem is a straightforward application of these, which only requires checking that the following diagram commutes up to natural isomorphism
\begin{displaymath}
\begin{tikzpicture}[scale=1]
\node (A) at (0,0) {$\Set$};
\node (B) at (2,0.8) {$\Gph/G_M$};
\node (C) at (2,-0.8) {$\cat/\B M$};
\path[->,font=\scriptsize,>=angle 90]
(A) edge node[above]{$\disc$} (B)
(A) edge node[below]{$\Disc$} (C)
(B) edge node [right]{$\Free_M$} (C);
\end{tikzpicture}
\end{displaymath}
 and that all the arrows in this diagram preserve finite colimits (since they are left adjoints).  
 \end{proof}
 
Recall from \cref{Defn:Kleisli_category} that for any monoid $M$ there is a category $\K(\Gph/G_M)$ of $M$-labeled graphs and Kleisli morphisms between these, where a Kleisli morphism from $(G,\ell)$ to $(G',\ell')$ is defined to be a map between free $M$-graded categories
 \[   f \maps \Free_M(G,\ell) \to \Free_M(G',\ell')  .\]
We can use the theory of structured cospans to define a double category of \emph{open} $M$-labeled graphs and Kleisli morphisms.  To do this, we use the functor
\[    \Phi \maps \Set \to K(\Gph/G_M) \]
defined as follows:
\begin{itemize}
\item $\Phi$ sends any set $S$ to the unique $M$-graded category with object set $S$ and only identity morphisms.  Note that this $M$-graded category is $\Free_M(G,\ell)$ where $G$ is the graph with vertex set $S$ and no edges, and $\ell$ is the only possible $\ell$-labeling of $G$. 
\item $\Phi$ sends any function $f \maps S \to T$ to the unique map of $M$-graded categories acting as $f$ on objects.
\end{itemize}
 
 \begin{thm}
\label{Thm:Open_monoid-labeled_graphs}
For any monoid $M$, there is a symmetric monoidal double category of open $M$-labeled graphs and Kleisli
morphisms, $\lOpen(K(\Gph/G_M))$, in which
\begin{itemize}
\item an object is a set,
\item a vertical 1-morphism from $A$ to $B$ is a function $f \maps A \to B$,
\item a horizontal 1-cell from $A$ to $B$ is an open $M$-labeled graph from $A$ to $B$:
\begin{displaymath}
\begin{tikzpicture}[scale=1.5]
\node (A) at (0,0) {$\Phi(A)$};
\node (B) at (1,0) {$X$};
\node (C) at (2,0) {$\Phi(B)$,};
\path[->,font=\scriptsize,>=angle 90]
(A) edge node[above]{$i$} (B)
(C) edge node[above]{$o$} (B);
\end{tikzpicture}
\end{displaymath}
\item a 2-morphism is a \define{Kleisli morphism of open $M$-labeled graphs}, that is, a commutative diagram in $\K(\Gph/G_M)$ of the form
\begin{displaymath}
\begin{tikzpicture}[scale=1.5]
\node (A) at (-0.5,0) {$\Phi(A)$};
\node (B) at (1,0) {$X$};
\node (C) at (2.5,0) {$\Phi(B)$};
\node (A') at (-0.5,-1) {$\Phi(A')$};
\node (B') at (1,-1) {$X'$};
\node (C') at (2.5,-1) {$\Phi(B')$.};
\path[->,font=\scriptsize,>=angle 90]
(A) edge node[above]{$i$} (B)
(C) edge node[above]{$o$} (B)
(A') edge node[below]{$i'$} (B')
(C') edge node[below]{$o'$} (B')
(A) edge node [left]{$\Phi(f)$} (A')
(B) edge node [left]{$\alpha$} (B')
(C) edge node [right]{$\Phi(g)$} (C');
\end{tikzpicture}
\end{displaymath}
\end{itemize}
Vertical composition is done using composition in $\Set$, while horizontal composition
is done using pushouts in $\K(\Gph/G_M)$. The tensor product of two open $M$-labeled graphs is
\[
\begin{tikzcd}[column sep={.4in,between origins}, row sep=.08in]
& X &&&& X' &&&& X+X' & \\
&&&\otimes&&&& = &&&& \\
\Phi(A)\ar[uur,"i" pos=0.3]&& \Phi(B)\ar[uul,"o"' pos=0.3] && \Phi(A')\ar[uur,"i'" pos=0.3] && \Phi(B')\ar[uul,"o'"' pos=0.3] && \Phi(A+A')\ar[uur,"i+i'" pos=0.3] && \Phi(B+B')\ar[uul,"o+o'"' pos=0.3]
\end{tikzcd}
\]
where $i + i'$ and $o + o'$ are defined using the fact that $\Phi$ preserves binary coproducts, and the tensor product of two 2-morphisms is given by:
\[
\begin{tikzpicture}[scale=1.5]
\node (E) at (2,0) {$\Phi(A_1)$};
\node (F) at (4,0) {$\Phi(B_1)$};
\node (G) at (3,0) {$X_1$};
\node (E') at (2,-1) {$\Phi(A_2)$};
\node (F') at (4,-1) {$\Phi(B_2)$};
\node (G') at (3,-1) {$X_2$};
\node (E'') at (6.5,0) {$\Phi(A_1')$};
\node (F'') at (8.5,0) {$\Phi(B_1')$};
\node (G'') at (7.5,0) {$X_1'$};
\node (E''') at (6.5,-1) {$\Phi(A_2')$};
\node (F''') at (8.5,-1) {$\Phi(B_2')$};
\node (G''') at (7.5,-1) {$X_2'$};
\node (X) at (5.25,-0.5) {$\otimes$};
\node (E'''') at (3.5,-2) {$\Phi(A_1 + A_1')$};
\node (F'''') at (7,-2) {$\Phi(B_1 + B_1')$};
\node (G'''') at (5.25,-2) {$X_1 + X_1'$};
\node (E''''') at (3.5,-3) {$\Phi(A_2 + A_2')$};
\node (F''''') at (7,-3) {$\Phi(B_2 + B_2').$};
\node (G''''') at (5.25,-3) {$X_2 + X_2'$};
\node (Y) at (2,-2.5) {$=$};
\path[->,font=\scriptsize,>=angle 90]
(F) edge node[above]{$o_1$} (G)
(E) edge node[left]{$\Phi(f)$} (E')
(F) edge node[right]{$\Phi(g)$} (F')
(G) edge node[left]{$\alpha$} (G')
(E) edge node[above]{$i_1$} (G)
(E') edge node[below]{$i_2$} (G')
(F') edge node[below]{$o_2$} (G')
(F'') edge node[above]{$o_1'$} (G'')
(E'') edge node[left]{$\Phi(f')$} (E''')
(F'') edge node[right]{$\Phi(g')$} (F''')
(G'') edge node[left]{$\alpha'$} (G''')
(E'') edge node[above]{$i_1'$} (G'')
(E''') edge node[below]{$i_2'$} (G''')
(F''') edge node[below]{$o_2'$} (G''')
(F'''') edge node[above]{$o_1 + o_1'$} (G'''')
(E'''') edge node[left]{$\Phi(f + f')$} (E''''')
(F'''') edge node[right]{$\Phi(g + g')$} (F''''')
(G'''') edge node[left]{$\alpha + \alpha'$} (G''''')
(E'''') edge node[above]{$i_1 + i_1'$} (G'''')
(E''''') edge node[below]{$i_2 + i_2'$} (G''''')
(F''''') edge node[below]{$o_2 + o_2'$} (G''''');
\end{tikzpicture}
\]
\end{thm}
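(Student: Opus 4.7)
The plan is to reduce the theorem to \cref{Thm:Open_monoid-labeled_categories} via the fully faithful embedding $\Free_M \maps \K(\Gph/G_M) \hookrightarrow \cat/\B M$, which by the very definition of Kleisli morphism sends $(G,\ell) \to (G',\ell')$ to the corresponding map $\Free_M(G,\ell) \to \Free_M(G',\ell')$ of $M$-graded categories. Since $\Free_M \circ \Phi = \Disc$, this embedding carries $\Phi$-structured cospans to $\Disc$-structured cospans, and will transport any candidate double category structure on $\lOpen(\K(\Gph/G_M))$ into the already-constructed $\lOpen(\cat/\B M)$. At the level of horizontal 1-cells its essential image consists of exactly those open $M$-graded categories whose apex is free of the form $\Free_M(X)$; at the level of 2-cells the embedding is bijective on hom-sets by fully faithfulness of $\Free_M$.

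The substance of the proof is to check that this image is closed under the symmetric monoidal double category operations. Closure under the monoidal product is immediate since $\Free_M$ is a left adjoint and preserves coproducts, giving $\Free_M(X) + \Free_M(X') \cong \Free_M(X + X')$. The key step is closure under horizontal composition. Given composable open $M$-labeled graphs sharing a foot $\Phi(B)$, their composite in $\lOpen(\cat/\B M)$ has apex given by the pushout in $\cat/\B M$ of $\Free_M(X) \leftarrow \Disc(B) \to \Free_M(X')$. Since $\Free_M$ preserves pushouts (as a left adjoint) and $\Gph/G_M$ is cocomplete (a presheaf topos, \cref{Prop:Set-labeled_graph}(a)), this pushout equals $\Free_M$ applied to the pushout of $X \leftarrow \disc(B) \to X'$ in $\Gph/G_M$; in particular the composite apex is again free of the required form, so the tensor and horizontal composition formulas in the statement are the ones transported from $\lOpen(\cat/\B M)$.

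Once these closure properties are in hand, the symmetric monoidal double category axioms for $\lOpen(\K(\Gph/G_M))$ follow by transport along $\Free_M$ from those of \cref{Thm:Open_monoid-labeled_categories}, and the explicit descriptions of tensor product and horizontal composition in the statement fall out by unpacking the identifications through the embedding. The main obstacle, flagged in the introduction as the reason this ``pushes structured cospans slightly beyond their usually stated generality'', is precisely the verification in the previous paragraph: the relevant pushouts in $\K(\Gph/G_M)$ do exist and agree with those computed via $\Gph/G_M$, but this has to be argued by hand rather than cited directly from \cite{BaezCourser2020, BaezCourserVasilakopoulou2022}, whose hypotheses require the codomain of $\Phi$ to be finitely cocomplete --- something we neither assert nor need for $\K(\Gph/G_M)$.
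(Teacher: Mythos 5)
Your proposal is correct and rests on the same key fact as the paper's proof: the only colimits genuinely needed in $\K(\Gph/G_M)$ are the pushouts of cospans $\Free_M(X) \leftarrow \Disc(B) \to \Free_M(X')$ used to compose horizontal 1-cells, and these exist because they can be computed in $\cat/\B M$ and land back among the free $M$-graded categories. Where you differ is in how the double category is then assembled. The paper applies the structured-cospan construction directly to $\Phi \maps \Set \to \K(\Gph/G_M)$, observing that the usual hypothesis of finite cocompleteness of the codomain is invoked only at that one point; you instead take \cref{Thm:Open_monoid-labeled_categories} as given and restrict $\lOpen(\cat/\B M)$ along the full embedding of $\K(\Gph/G_M)$ into $\cat/\B M$, checking that the free objects are closed under tensor and horizontal composition. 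Your justification of the crucial closure step---that $\Free_M$, being a left adjoint, carries the graph-level pushout $X +_{\disc(B)} X'$ to the categorical pushout, so the composite apex is again free---is if anything tighter than the paper's remark that ``in forming the pushout we do not need to identify any morphisms, only objects''; the two observations agree because every leg $\Disc(B) \to \Free_M(X)$ is determined by a function from $B$ to the vertices of $X$ and hence comes from a map of labeled graphs. What the paper's route buys is not having to check that the coherence data (associators, unitors, symmetry) restricts to the sub-double category; what your route buys is never having to reopen the proof of the general structured-cospan theorem. Both are legitimate, and I see no gap in yours.
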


\begin{proof}
As mentioned in the proof of \cref{Thm:Open_set-labeled_graphs}, a commonly stated theorem for constructing a symmetric monoidal double category of $F$-structured cospans assumes that we have categories $\A$ and $\X$ with finite colimits and a functor $F \maps \A \to \X$ that preserves them.   We would like to apply this theorem to the functor $\Phi \maps  \Set \to \K(\Gph/G_M)$.  Unfortunately, while the category $\Set$ has finite colimits, $\Phi$ preserves them, and the Kleisli category $\K(\Gph/G_M)$ has finite coproducts, this Kleisli category does not have pushouts.  Luckily, if one examines the proof of the commonly stated theorem, it becomes clear that we only need pushouts at one point: to compose structured cospans.  In the case at hand, this means taking pushouts of this sort:
\[
\begin{tikzpicture}[scale=1]
\node (A) at (0,0) {$\Phi(A)$};
\node (B) at (1,1) {$X$};
\node (C) at (2,0) {$\Phi(B)$};
\node (D) at (3,1) {$Y$};
\node (E) at (4,0) {$\Phi(C)$};
\node (F) at (2,2) {$X +_{\Phi(B)} Y$};
\node (push) at (2,1.4) {\rotatebox{135}{$\ulcorner$}};
\path[->,font=\scriptsize,>=angle 90]
(A) edge node[above, pos=0.3]{$i$} (B)
(C) edge node[above, pos=0.3]{$o$}(B)
(C) edge node[above, pos=0.3]{$i'$} (D)
(E) edge node[above, pos=0.3]{$o'$}(D)
(B) edge node[above, pos=0.3]{$$} (F)
(D) edge node[above, pos=0.3]{$$}(F);
\end{tikzpicture}
\]
where $X$ and $Y$ are free $M$-graded categories.   This pushout exists in $\cat/\B M$, and the result is a free $M$-graded category, because in forming the pushout we do not need to identify any morphisms, only objects.  It follows that this pushout also exists in $\K(\Gph/G_M)$.   
\end{proof}

Finally, given a commutative monoid $C$, we can define a symmetric monoidal double category of open $C$-labeled finite graphs and additive morphisms.  However,  we cannot do this using structured cospans and the left adjoint $\Fin\Set \to \C\Fin\Gph$ to the functor assigning any $C$-labeled finite graph its set of vertices, since $\C\Fin\Gph$ does not have finite coproducts or pushouts, not even the limited class of pushouts that would be used to compose structured cospans.    Here we instead use \emph{decorated} cospans, introduced by Fong \cite{Fong2015,FongThesis} and later brought up to the level of double categories \cite{BaezCourserVasilakopoulou2022}.

In this approach we view an open $C$-labeled finite graph 
\[
\begin{tikzpicture}[scale=1.2]
\node (A) at (0,0) {$\disc(A)$};
\node (B) at (1,1) {$X$};
\node (C) at (2,0) {$\disc(B)$};
\path[->,font=\scriptsize,>=angle 90]
(A) edge node[above, pos=0.3]{$i$} (B)
(C)edge node[above, pos=0.3]{$o$}(B);
\end{tikzpicture}
\]
in a new way.   We think of it as a cospan of finite sets
\[
\begin{tikzpicture}[scale=1.2]
\node (A) at (0,0) {$A$};
\node (B) at (1,1) {$V$};
\node (C) at (2,0) {$B$};
\path[->,font=\scriptsize,>=angle 90]
(A) edge node[above, pos=0.3]{$i$} (B)
(C)edge node[above, pos=0.3]{$o$}(B);
\end{tikzpicture}
\]
where $V$ is `decorated' with some extra stuff: namely, a $C$-labeled finite graph $X$ having $V$ as its set of vertices.   We think of this `decoration' $X$ as an object of the category $F(V)$, where $F \maps \Fin\Set \to \Cat$ maps each finite set to the category of $C$-labeled finite graphs with that set of vertices, and additive morphisms between these.

In general, the theory of decorated cospans gives a symmetric monoidal double category  $F\lCsp$ from a category $\A$ with finite colimits and a symmetric lax monoidal pseudofunctor $F \maps (\A,+) \to (\Cat,\times)$.     In this double category $F\lCsp$:
\begin{itemize}
\item an object is an object of $\A$;
\item a vertical 1-morphism is a morphism of $\A$;
\item a horizontal 1-cell is an $F$\define{-decorated cospan}, that is,
a diagram in $\A$ of the form
\[
\begin{tikzpicture}[scale=1]
\node (A) at (0,0) {$A$};
\node (B) at (1,1) {$M$};
\node (C) at (2,0) {$B$};
\path[->,font=\scriptsize,>=angle 90]
(A) edge node[above, pos = 0.25]{$i$} (B)
(C) edge node[above, pos = 0.25]{$o$} (B);
\end{tikzpicture}
\]
together with a \define{decoration} $d \in F(M)$;
\item a 2-morphism is a \define{map of} $F$\define{-decorated cospans}, that is,
a commutative diagram in $\A$ of the form
\[
\begin{tikzpicture}[scale=1.5,baseline=(current bounding box.center)]
\node (A) at (0,0.5) {$A$};
\node (A') at (0,-0.5) {$A'$};
\node (B) at (1,0.5) {$M$};
\node (C) at (2,0.5) {$B$};
\node (C') at (2,-0.5) {$B'$};
\node (D) at (1,-0.5) {$M'$};
\node (E) at (3,0.5) {$d \in F(M)$};
\node (F) at (3,-0.5) {$d' \in F(M')$};
\path[->,font=\scriptsize,>=angle 90]
(A) edge node[above]{$i$} (B)
(C) edge node[above]{$o$} (B)
(A) edge node[left]{$f$} (A')
(C) edge node[right]{$g$} (C')
(A') edge node[below] {$i'$} (D)
(C') edge node[below] {$o'$} (D)
(B) edge node [left] {$h$} (D);
\end{tikzpicture}
\]
together with a \define{decoration morphism} $\tau \maps F(h)(d) \to d'$ in $F(M')$.
\end{itemize}
A key piece of the symmetric lax monoidal pseudofunctor $F$ is its `laxator', which gives for each pair of objects $M,M' \in \A$ a functor
\[   \phi_{M,M'} \maps F(M) \times F(M') \to F(M+M')  .\] 
This is used to compose $F$-decorated cospans, as follows.    Given a composable pair of $F$-decorated cospans
\[
\begin{tikzpicture}[scale=1]
\node (A) at (0,0) {$A$};
\node (B) at (1,1) {$M$};
\node (C) at (2,0) {$B$};
\node (D) at (3,1) {$d \in F(M)$};
\node (A') at (5,0) {$B$};
\node (B') at (6,1) {$N$};
\node (C') at (7,0) {$C$};
\node (D') at (8,1) {$d' \in F(M')$};
\path[->,font=\scriptsize,>=angle 90]
(A) edge node[above, pos = 0.25]{$i$} (B)
(C) edge node[above, pos = 0.25]{$o$} (B)
(A') edge node[above, pos = 0.25]{$i'$} (B')
(C') edge node[above, pos = 0.25]{$o'$} (B');
\end{tikzpicture}
\]
we define their composite to be 
\[
\begin{tikzpicture}[scale=1]
\node (A) at (0,0) {$A$};
\node (B) at (1,1) {$M$};
\node (C) at (2,0) {$B$};
\node (B') at (3,1) {$M'$};
\node (C') at (4,0) {$C$.};
\node (D) at (2,2) {$M+_B M'$};
\node (E) at (7,2) { $F(j) \left( \phi_{M,M'} (d,d') \right) \in F(M +_B M') $};
\node (push) at (2,1.4) {\rotatebox{135}{$\ulcorner$}};
\path[->,font=\scriptsize,>=angle 90]
(A) edge node[above, pos = 0.25]{$i$} (B)
(C) edge node[above, pos = 0.25]{$o$} (B)
(C) edge node[above, pos = 0.25]{$i'$} (B')
(C') edge node[above, pos = 0.25]{$o'$} (B')
(B) edge node[above, pos = 0.25]{} (D)
(B') edge node[above, pos = 0.25]{} (D);
\end{tikzpicture}
\]
Here the cospans are composed using a pushout in $\A$, while the decoration is defined using
the laxator and $j \maps M + M' \to M +_B M'$, the canonical map from the coproduct to the pushout.  For details see \cite[Sec.\ 2]{BaezCourserVasilakopoulou2022}.

To define the double category of $C$-labeled finite graphs using this machinery we take $\A = \Fin\Set$.  We want a decoration of $V \in \Fin\Set$ to be a $C$-labeled finite graph with $V$ as its set of vertices.   Thus we let $F \maps (\Fin\Set, +) \to (\Cat, \times)$ assign to $V$ the category $F(V)$ where:
\begin{itemize}
\item An object is a $C$-labeled finite graph with vertex set $V$, say $(V, E, s, t, \ell) $.
\item A morphism is an additive morphism of $C$-labeled graphs that is the identity on vertices.
\end{itemize}
We let $F$ assign to each function $g \maps V \to V'$ the functor $F(g) \maps F(V) \to F(V')$ where:
\begin{itemize}
\item $F(g)$ sends any object $(V, E, s, t, \ell)$ to $(V', E, g \circ s, g \circ t, \ell)$.
\item $F(g)$ sends any morphism $f \maps (V, E, s, t, \ell) \to (V, E', s', t', \ell') $ in $F(V)$, which is determined by a map of graphs $1_V \maps V \to V$, $f_1 \maps E \to E'$, to the morphism $F(g)(f) \maps (V', E, g \circ s, g \circ t, \ell) \to (V', E', g \circ s', g \circ t', \ell')$ in $F(V')$ 
determined  by the map of graphs $1_{V'} \maps V' \to V'$ , $f_1 \maps E \to E'$.
\end{itemize}
To parse this, it is useful to review the definition of `additive morphism', \cref{Defn:Morphism-of-commutative-monoid-labeled-graphs}.   

Next we must equip $F$ with the structure of a lax monoidal pseudofunctor \cite{BaezCourserVasilakopoulou2022}.  We omit some details here.  The most important structure is the laxator
\[   \phi_{V,V'} \maps F(V) \times F(V') \to F(V + V')  .\]
On objects, this takes a $C$-labeled finite graph with vertex set $V$, say $(V,E,s,t,\ell)$, and a $C$-labeled finite graph with vertex set $V'$, say $(V',E',s',t',\ell')$, and gives the $C$-labeled finite graph 
\[        (V+V', E+E', s+s', t+t', \langle \ell, \ell' \rangle) \]
with vertex set $V + V'$.  Here $\langle \ell, \ell'\rangle \maps E + E' \to C$ is defined to equal $\ell$ on $E$ and $\ell'$ on $E'$.

Given that $F \maps (\Fin\Set, +) \to (\Cat, \times)$ is a symmetric lax monoidal pseudofunctor, the 
theory of decorated cospans gives this result:

 \begin{thm}
\label{Thm:Open_commutative_monoid-labeled_graphs}
For any commutative monoid $C$, there is a symmetric monoidal double category of open $C$-labeled finite graphs and additive morphisms, $\lOpen(C\Fin\Gph)$, in which
\begin{itemize}
\item an object is a finite set;
\item a vertical 1-morphism from $A$ to $B$ is a function $f \maps A \to B$;
\item a horizontal 1-cell from $A$ to $B$ is an open $C$-labeled finite graph from $A$ to $B$:
\begin{displaymath}
\begin{tikzpicture}[scale=1.5]
\node (A) at (0,0) {$A$};
\node (B) at (1,0) {$M$};
\node (C) at (2,0) {$B$};
\node (D) at (3,0) {$d \in F(M);$};
\path[->,font=\scriptsize,>=angle 90]
(A) edge node[above]{$i$} (B)
(C) edge node[above]{$o$} (B);
\end{tikzpicture}
\end{displaymath}
\item a 2-morphism is an \define{additive morphism of open} $C$\define{-labeled graphs}, that is, a commutative diagram in $\Fin\Set$ of the form
\begin{displaymath}
\begin{tikzpicture}[scale=1.5]
\node (A) at (-0.5,0) {$A$};
\node (B) at (1,0) {$M$};
\node (C) at (2.5,0) {$B$};
\node (D) at (4,0) {$d \in F(M)$};
\node (A') at (-0.5,-1) {$A'$};
\node (B') at (1,-1) {$M'$};
\node (C') at (2.5,-1) {$B'$};
\node (D') at (4,-1) {$d' \in F(M')$};
\path[->,font=\scriptsize,>=angle 90]
(A) edge node[above]{$i$} (B)
(C) edge node[above]{$o$} (B)
(A') edge node[below]{$i'$} (B')
(C') edge node[below]{$o'$} (B')
(A) edge node [left]{$f$} (A')
(B) edge node [left]{$\alpha$} (B')
(C) edge node [right]{$g$} (C');
\end{tikzpicture}
\end{displaymath}
\end{itemize}
together with a  `decoration morphism' $\tau \maps F(\alpha)(d) \to d'$.
Vertical composition is done using composition in $\Fin\Set$ and the appropriately defined
composition of decoration morphisms, while horizontal composition
is done using composition of $F$-decorated cospans. 
\end{thm}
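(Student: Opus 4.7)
The plan is to apply the general decorated cospans construction of Baez--Courser--Vasilakopoulou directly to the pseudofunctor $F \maps (\Fin\Set, +) \to (\Cat,\times)$ sketched in the preamble to the theorem. Since $\Fin\Set$ has finite colimits, it suffices to verify that $F$ together with the laxator $\phi_{V,V'}$ is a symmetric lax monoidal pseudofunctor; once this is established, the machinery of $F$-decorated cospans immediately yields a symmetric monoidal double category whose objects, 1-cells, and 2-cells match the description in the theorem statement, and $\lOpen(C\Fin\Gph)$ is by definition this decorated cospan double category.

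First I would check that $F$ is a strict functor from $\Fin\Set$ to $\cat$, hence a pseudofunctor. The assignment $V \mapsto F(V)$ lands in $\cat$ because vertex-fixing additive morphisms compose associatively with identities. Given $g \maps V \to V'$, the action $F(g)$ postcomposes source and target with $g$ while leaving the edge set, edge map, and edge labeling untouched; since a vertex-fixing additive morphism is determined by its edge-level component, $F(g)$ preserves composition and identities on the nose, giving $F(g \circ h) = F(g) \circ F(h)$ and $F(1_V) = 1_{F(V)}$ strictly.

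Next I would verify that the laxator $\phi_{V,V'}$, which on objects takes two $C$-labeled finite graphs and returns their disjoint union with the copairing $\langle \ell, \ell'\rangle$ on edges, satisfies the coherence axioms of a symmetric lax monoidal structure. Naturality in $(V,V')$ reduces to the observation that applying $F(g+g')$ and then taking disjoint unions agrees with the reverse order, which is immediate since both operations act only on source and target maps via postcomposition while leaving edges and labels alone. Associativity and unitality follow because disjoint union of finite sets is strictly associative and the empty graph is a strict unit, so the unique isomorphisms coming from the coproduct structure of $\Fin\Set$ intertwine the laxators. Symmetry follows from the fact that $\langle \ell, \ell'\rangle$ and $\langle \ell', \ell\rangle$ are identified under the evident swap of disjoint summands. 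This verification is the main bookkeeping task, but no genuinely new computation is needed beyond tracking disjoint unions of edge sets and their labelings; the potential obstacle is really just the length of the coherence diagrams rather than any deep content.

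Finally, having equipped $F$ with the structure of a symmetric lax monoidal pseudofunctor, I would invoke the decorated cospan construction of \cite{BaezCourserVasilakopoulou2022} (which rests on \cite{Fong2015, FongThesis}) to produce the desired symmetric monoidal double category $F\lCsp$, which we rename $\lOpen(C\Fin\Gph)$. Unpacking this general construction in our situation recovers exactly the data described in the statement: objects are finite sets, vertical 1-morphisms are functions in $\Fin\Set$, horizontal 1-cells are cospans $A \to M \leftarrow B$ in $\Fin\Set$ decorated by an object $d \in F(M)$, and 2-morphisms are commuting squares of cospans equipped with a decoration morphism $\tau \maps F(h)(d) \to d'$. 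Horizontal composition is pushout of cospans combined with the laxator applied to the two decorations and then transported along the canonical map $M + M' \to M +_B M'$, as spelled out in the preamble, and the symmetric monoidal structure is produced directly from coproducts in $\Fin\Set$ together with the symmetry of $\phi$.
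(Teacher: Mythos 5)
Your proposal is correct and follows essentially the same route as the paper: equip $F \maps (\Fin\Set,+) \to (\Cat,\times)$ with its symmetric lax monoidal pseudofunctor structure and then invoke Theorems 2.1 and 2.2 of the Baez--Courser--Vasilakopoulou decorated cospans paper. You spell out a few coherence checks that the paper explicitly omits, which is fine; the only small caveat is that coproducts in $\Fin\Set$ are associative and unital only up to canonical isomorphism rather than strictly, but your remark that these isomorphisms intertwine the laxators is the correct statement.
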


\begin{proof}
The double category structure follows from \cite[Thm. 2.1]{BaezCourserVasilakopoulou2022}.   
The symmetric monoidal structure, which we have not described here, follows from  \cite[Thm. 2.2]{BaezCourserVasilakopoulou2022}, and is detailed there.
\end{proof}

\section{Feedback loops and homology}
\label{Sec:Feedback_loops}

In system dynamics, graphs labeled by signs are often used to study feedback loops.  We can approach this systematically for graphs labeled by elements of a commutative monoid using homology theory.  Homology lets us detect cycles in a graph.  The homology of a graph with coefficients in an abelian group is well understood, and independent of the direction of the graph's edges, but in fact we can define the homology of a graph with coefficients in a commutative monoid, in a way that depends on the edge directions.  We are most interested in taking coefficients in $\N$, because this gives a way to study feedback loops.

If $C$ is a commutative monoid and $X$ is a set, 
we use $C[X]$ to stand for the set of formal finite linear combinations of elements of $X$ with coefficients in $C$.  Thus, 
\[   C[X] = \left\{ \sum_{x \in X} a_x x \; \Big{\vert} \; a_x \in C, \text{ all but finitely many } a_x \text{ are zero} \right\}.\]
This is a commutative monoid under addition.  In particular, if we make $\N$ into a commutative monoid with addition as the monoid operation, $\N[X]$ is the free commutative monoid on $X$.  

\begin{defn} 
Given a graph $G = (E,V,s,t)$ and a commutative monoid $C$, define
\[    C_0(G,C) = C[V] , \qquad     C_1(G,C) = C[E]  .\]
We call $C_0(G,C)$ (resp.\ $C_1(G,C)$) the commutative monoid of \define{$0$-chains} (resp.\ \define{$1$-chains}) on $G$ with coefficients in $C$.
\end{defn}

The source and target maps of the graph give two maps sending $1$-chains to $0$-chains.   Namely, 
starting from the maps $s,t \maps E \to V$ we can extend them by linearity to monoid homomorphisms 
\[    C[s], C[t] \maps C_1(G,C) \to C_0(G,C). \]
Indeed, there is a functor $C[-] \maps \Set \to \Comm\Mon$.  We have already defined this on objects, and given a map of sets $f \maps X \to Y$ we define $C[f] \maps C[X] \to C[Y]$ by
\[    C[f] \left( \sum_{x \in X} a_x x \right) = \sum_{x \in X} a_x f(x) .\]

When $C$ is an abelian group we can define a group homomorphism $d \maps C_1(G,C) \to C_0(G,C)$ by
\[   d = C[s] - C[t]. \]
In this context the usual expressions for homology groups as quotient groups simplify, and we can define
\[   H_1(G,C) = \ker d , \qquad H_0(G,C) = \coker \, d  .\]
But when working with commutative monoids that are not abelian groups, we cannot form the difference $C[s] - C[t]$, so we use the equalizer and coequalizer of $C[s]$ and $C[t]$ rather than the kernel and cokernel of their difference, defining
\[    H_1(G,C) = \Eq(C[s], C[t]), \qquad
      H_0(G,C) = \coeq(C[s], C[t]) .\]
More formally:

\begin{defn}
Given a graph $G$ and a commutative monoid $C$, we define $H_1(G,C)$, the \define{first homology} of $G$ with coefficients in $C$, to be the equalizer of the morphisms
\[  \begin{tikzcd}
C_1(G,C) \arrow[r, "C\lbrack s\rbrack", shift left = 1.5] \arrow[r, "C\lbrack t\rbrack", swap, shift right = 1.5]  &
C_0(G,C).
\end{tikzcd} 
\]
We call an element of $H_1(G,C)$ a \define{1-cycle} or simply a \define{cycle} with coefficients in $C$. 
We define $H_0(G,C)$, the \define{zeroth homology} of $G$ with coefficients in $C$, to be the coequalizer of $s$ and $t$. 
\end{defn}

We can fit everything into one diagram as follows:      
\[  \begin{tikzcd}
H_1(G,C) \arrow[r, "i"] & C_1(G,C) \arrow[r, "C\lbrack s \rbrack", shift left = 1.5] \arrow[r, "C \lbrack t \rbrack", swap, shift right = 1.5]  &
C_0(G,C)  \arrow[r,"p"]  & H_0(G,C).
\end{tikzcd} 
\]
where $i$ is the inclusion of the equalizer of $C[s]$ and $C[t]$ and $p$ is the projection onto their coequalizer.  When $C$ is an abelian group, we can rewrite this as an exact sequence
\[  0 \to H_1(G,C) \xrightarrow{i} C_1(G,C) \xrightarrow{d} C_0(G,C) \xrightarrow{p} H_0(G,C) \to 0. \]

\begin{eg}
\label{Eg:Homology_of_graphs}
This graph:
\[
G = \begin{tikzcd}
u  \arrow[r, bend left] & v \arrow[l, bend left]
\end{tikzcd}
\]
has $H_1(G,\N) \cong \N$, while this graph:
\[
G' = \begin{tikzcd}
u \arrow[r, bend left] \arrow[r, bend right] & v
\end{tikzcd}
\]
has $H_1(G',\N) \cong \{0\}$.  In simple terms, this says that the first graph offers the opportunity for a feedback loop, while the second does not.  On the other hand, we have $H_1(G,\Z) \cong H_1(G',\Z) \cong \Z$.   We have $H_0(G,\N) \cong H_0(G',\N) \cong \{0\}$ and similarly with $\Z$ coefficients, since both graphs are connected.
\end{eg}

The zeroth homology of a graph with coefficients in a commutative monoid is easily understood.  
There are various concepts of `connected component' for a graph, but define the set of \define{undirected components} of a graph $G$, $\pi_0(G)$, to be the coequalizer
\begin{equation}
\label{Eq:Pi_0_as_coequalizer}
 \begin{tikzcd}
 E \arrow[r, "s", shift left = 1.5] \arrow[r, "t", swap, shift right = 1.5]  & V \arrow[r, "p"] & \pi_0(G) .
\end{tikzcd} 
\end{equation}
Thus, two vertices $v, v'$ lie in the same undirected component iff there is an \define{undirected path} from $v$ to $v'$: a sequence of vertices $v = v_0, \dots, v_n = v'$ such that for each $i =1, \dots ,n$ there is either an edge $e \maps v_{i-1} \to v_i$ or an edge $e \maps v_i \to v_{i-1}$.

\begin{thm}
\label{Prop:Zeroth_homology}
Let $G$ be a graph and $C$ a commutative monoid.  Then $H_0(G,C) \cong C[\pi_0(G)]$.
\end{thm}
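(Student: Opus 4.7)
The plan is to recognize $H_0$ as a coequalizer in $\Comm\Mon$ of the image of the set-level coequalizer defining $\pi_0(G)$ under the functor $C[-] \maps \Set \to \Comm\Mon$, and then invoke the fact that $C[-]$ preserves colimits.

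First I would verify that $C[-]$ is a left adjoint. Its right adjoint sends a commutative monoid $M$ to its underlying set of elements $U(M)$, with the counit $C[U(M)] \to M$ evaluating a formal sum $\sum_{m} a_m \, m$ (with $a_m \in C$) to $\sum_{m} a_m \cdot m$ computed in $M$ using the scalar action of $C$. Concretely, $C[X]$ represents the functor $\Comm\Mon(C[X],-)$ that sends $M$ to the set of maps $X \to U(M)$: any function $f \maps X \to U(M)$ extends uniquely to the monoid homomorphism $\sum_x a_x x \mapsto \sum_x a_x \cdot f(x)$. Thus $C[-]$ is left adjoint to $U$, and consequently preserves all colimits that exist in $\Set$, in particular coequalizers.

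Next, applying $C[-]$ to the coequalizer diagram \cref{Eq:Pi_0_as_coequalizer}
\[
\begin{tikzcd}
E \arrow[r, "s", shift left = 1.5] \arrow[r, "t", swap, shift right = 1.5] & V \arrow[r, "p"] & \pi_0(G)
\end{tikzcd}
\]
yields a coequalizer in $\Comm\Mon$:
\[
\begin{tikzcd}
C[E] \arrow[r, "C\lbrack s\rbrack", shift left = 1.5] \arrow[r, "C\lbrack t\rbrack", swap, shift right = 1.5] & C[V] \arrow[r, "C\lbrack p\rbrack"] & C[\pi_0(G)].
\end{tikzcd}
\]
Since $C_1(G,C) = C[E]$ and $C_0(G,C) = C[V]$ by definition, the universal object of this coequalizer is exactly $H_0(G,C)$. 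Comparing the two coequalizers gives the desired isomorphism $H_0(G,C) \cong C[\pi_0(G)]$, natural in $G$ and $C$.

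There is no serious obstacle here; the one point worth making carefully is that coequalizers in $\Comm\Mon$ exist (so the left adjoint actually has something to preserve), which is standard since $\Comm\Mon$ is a finitary variety of algebras and hence cocomplete. The whole argument is essentially a one-line application of the slogan ``left adjoints preserve colimits,'' once one notices that the zeroth homology is built from $C[-]$ applied levelwise to the data whose set-level coequalizer is $\pi_0(G)$.
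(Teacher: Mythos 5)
Your overall strategy is exactly the paper's: recognize $H_0(G,C)$ as the coequalizer of $C[s],C[t]$, note that $C[-]$ preserves colimits because it is a left adjoint, and apply it to the set-level coequalizer \cref{Eq:Pi_0_as_coequalizer} defining $\pi_0(G)$. However, your justification of the key fact contains a genuine error: $C[-]$ is \emph{not} left adjoint to the underlying-set functor $U \maps \Comm\Mon \to \Set$, and the claimed counit $\sum_m a_m\, m \mapsto \sum_m a_m \cdot m$ does not make sense, because a general commutative monoid $M$ carries no ``scalar action'' of $C$ (there is no way to multiply an element of $M$ by an element of an arbitrary commutative monoid $C$; this only works for $C = \N$, where $n \cdot m$ means $m + \cdots + m$). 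The left adjoint to $U$ is $\N[-]$, the free commutative monoid functor, and $C[X] \not\cong \N[X]$ in general --- for instance $C[X]$ is trivial when $C$ is the trivial monoid. The correct universal property is $\Comm\Mon(C[X],M) \cong \Set(X, \Comm\Mon(C,M))$: a homomorphism out of $C[X]$ assigns to each $x \in X$ a monoid homomorphism $C \to M$ (the image of the generators $a\cdot x$ for $a \in C$), not merely an element of $M$.

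The gap is easily repaired, and the paper does so: one writes $C[-] \cong C \otimes \N[-]$, where $\N[-]$ is left adjoint to $U$ and $C \otimes -$ is left adjoint to the internal hom $[C,-]$ on $\Comm\Mon$; the composite is therefore a left adjoint (with right adjoint $M \mapsto U([C,M])$) and preserves colimits. With that substitution the rest of your argument goes through verbatim, including the remark that $\Comm\Mon$ is cocomplete so the coequalizers in question exist.
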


\begin{proof}
This can be shown directly, but we sketch a proof using some facts, well known for abelian groups, which have easy analogues for commutative monoids \cite{MeseguerMontanari1990} and even more general algebraic structures \cite{Kock1971}.  First, given $A, B \in \Comm\Mon$, the set of homomorphisms $A \to B$ can be made into a commutative monoid $[A,B]$ using pointwise operations:
\[    (f + g)(a) = f(a) + g(a)  ,  \qquad  \forall f,g \in [A,B], \; a \in A .\]
Second, we can define a tensor product $A \otimes B$ of commutative monoids such that homomorphisms $A \otimes B \to C$ correspond naturally to maps $A \times B \to C$ that are homomorphisms in each argument.    This tensor product obeys hom-tensor adjointness: that is, there is 
a natural isomorphism
\[    \Comm\Mon(A \otimes B, C) \cong \Comm\Mon(B, [A,C]) .\]

For any commutative monoid $C$ there is a natural isomorphism 
\[         C[-] \cong C \otimes \N[-]  \]
of functors from $\Comm\Mon$ to $\Set$.  The functor $\N[-]$ is a left adjoint to the forgetful functor from $\Comm\Mon$ to $\Set$, and the functor $C \otimes -$ is left adjoint to the functor $[C, -] $.  It follows that $C[-] \cong C \otimes \N[-] \maps \Set \to \Comm\Mon$ is a left adjoint, so it preserves colimits.  Applying this functor to the coequalizer diagram \eqref{Eq:Pi_0_as_coequalizer}, we get a coequalizer diagram
\[  \begin{tikzcd}
 C[E] \arrow[r, "C\lbrack s \rbrack", shift left = 1.5] \arrow[r, "C \lbrack t \rbrack", swap, shift right = 1.5]  & C[V] \arrow[r, "C \lbrack t \rbrack"] & C[\pi_0(G)].
\end{tikzcd} 
\]
On the other hand, we have defined $H_0(G,C)$ as the coequalizer
\[  \begin{tikzcd}
 C[E] \arrow[r, "C\lbrack s \rbrack", shift left = 1.5] \arrow[r, "C \lbrack t \rbrack", swap, shift right = 1.5]  & C[V] \arrow[r, "C \lbrack t \rbrack"] & H_0(G,C)
\end{tikzcd} 
\]
though we  have denoted $C[E]$ as $C_1(G,C)$ and $C[V]$ as $C_0(G,C)$.    It follows that $H_0(G) \cong C[\pi_0(G)]$.
\end{proof}

Now let us turn to the first homology of a graph $G$ with coefficients in a commutative monoid.  When $C$ is an abelian group, $H_1(G,C)$ is isomorphic to the homology of the graph $G$ viewed as a topological space, which is well  understood \cite[Sec.\ III.3]{Massey}.    For example,  $H_1(G,\Z)$ is a free abelian group whose rank is the genus of the graph.   More generally, whenever $C$ is an abelian group, the universal coefficient theorem implies
\[              H_1(G,C) \cong C \otimes_{\Z} H_1(G,\Z). \]
The novelty lies in the case when $C$ is \emph{not} a group, since then the directions of the edges matter, as in \cref{Eg:Homology_of_graphs}.   We are then doing a simple sort of `directed algebraic topology'.  We are mainly interested in the case $C = \N$, since this captures the structure of possible feedback loops in the graph.  We now turn toward analyzing the structure of $H_1(G,\N)$.  As we shall see in
 \cref{Eg:Nonfreely_generated_homology}, it is not always a free commutative monoid.

To begin, note that there is a canonical preorder on any commutative monoid $C$, given by
\[   x \le y \iff   x + a = y \text{ for some } a \in C.\]
If $C$ is an abelian group then $x \le y$ for all $x, y \in C$.  But for $C = \N$, the canonical preorder is the usual linear ordering on natural numbers.  More generally, in the free commutative monoid $\N[X]$ on any set $X$ the canonical preorder is given by
\[   \sum_{x \in X} a_x x \le \sum_{x \in X} B_x x  \iff a_x \le B_x \text{ for all } x \in X .\]

\begin{defn} Given a commutative monoid $C$, an element $x \in C$ is \define{minimal} if it is nonzero and $y \le x$ implies that $y = x$ or $y = 0$.
\end{defn}

Any free commutative monoid $\N[X]$ is freely generated by its minimal elements, which correspond to the elements of $X$.  A weaker statement holds for the first homology of a graph with coefficients in $\N$.

\begin{thm}
\label{Thm:generated_by_minimal_elements}
For any graph $G$, $H_1(G,\N)$ is generated by its minimal elements.
\end{thm}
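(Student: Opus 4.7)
The plan is to show that every nonzero cycle dominates a minimal cycle, and then peel off minimal cycles one at a time by induction on total weight.

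First I would unpack the canonical preorder on $H_1(G,\N)$. Since $H_1(G,\N)$ is defined as the equalizer of $\N[s], \N[t]\maps \N[E]\to \N[V]$, it sits inside $\N[E] = C_1(G,\N)$ as a submonoid, and $\N[E]$ is cancellative. Consequently, for $x,y \in H_1(G,\N)$ one has $x \le y$ in $H_1(G,\N)$ (i.e.\ $y = x+a$ for some $a \in H_1(G,\N)$) if and only if $x_e \le y_e$ for every $e \in E$ in the componentwise order on $\N[E]$; indeed, if $x \le y$ componentwise then $y-x$ makes sense in $\N[E]$ and automatically lies in $H_1(G,\N)$ because $\N[s]$ and $\N[t]$ agree on both $x$ and $y$. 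Thus a minimal element of $H_1(G,\N)$ is nothing but a nonzero cycle that is minimal in the componentwise partial order on $\N[E]$ among nonzero cycles.

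Next I would prove the key claim: every nonzero $z \in H_1(G,\N)$ satisfies $m \le z$ for some minimal $m \in H_1(G,\N)$. Consider the set
\[ S_z \;=\; \{ y \in H_1(G,\N) : y \ne 0,\ y \le z \}. \]
This set contains $z$ and is finite, because it is contained in the finite product $\prod_{e \in E} \{0,1,\dots,z_e\}$. Choose $m \in S_z$ that is minimal in the componentwise order on $\N[E]$. If $y \in H_1(G,\N)$ is nonzero and $y \le m$, then $y \in S_z$ (since $y \le m \le z$), so by the choice of $m$ we have $y = m$. Hence $m$ is a minimal element of $H_1(G,\N)$.

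Finally, I would run induction on the total weight $|z| := \sum_{e \in E} z_e \in \N$. If $z = 0$ there is nothing to prove. If $z \ne 0$, pick a minimal cycle $m$ with $m \le z$ as above. Because $\N[E]$ is cancellative and $m \le z$ componentwise, $z' := z - m$ lies in $\N[E]$, and from $\N[s](z') = \N[s](z) - \N[s](m) = \N[t](z) - \N[t](m) = \N[t](z')$ we conclude $z' \in H_1(G,\N)$. Since $m$ is nonzero we have $|z'| < |z|$, so by the induction hypothesis $z'$ is a finite sum of minimal elements, whence so is $z = m + z'$. The only real subtlety---and the step I would take care to justify---is the identification of the canonical preorder on $H_1(G,\N)$ with the restricted componentwise order on $\N[E]$, since without it the word ``minimal'' in the two contexts would not obviously match.
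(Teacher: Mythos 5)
Your proof is correct and follows essentially the same route as the paper's: both hinge on identifying the canonical preorder on $H_1(G,\N)$ with the componentwise order inherited from $C_1(G,\N)\cong\N[E]$ (the point you rightly flag as the real subtlety) and then exploiting well-foundedness of that order. The only difference is organizational: the paper repeatedly splits a non-minimal element into two smaller nonzero summands and appeals to the absence of infinite descending chains, whereas you extract a minimal cycle below any nonzero cycle and induct on the total weight $|z|$, which makes the termination argument more explicit.
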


\begin{proof}
First recall that $H_1(G,\N)$ is a submonoid of $C_1(G,\N) \cong \N[E]$ where $E$ is the set of edges of $G$.  The canonical preorder on $H_1(G,\N)$ is a restriction of that in $C_1(G,\N)$, since given $x,y \in H_1(G,\N)$ there exists $z \in C_1(G,\N)$ with $x + z = y$ if and only if
there exists $z \in H_1(G,\N)$ obeying this equation.  As a result, since there are no infinite descending chains of elements $c_1 > c_2 > c_3 > \cdots $ in $C_1(G,\N) \cong \N[E]$, there are no infinite descending chains in $H_1(G,\N)$.

Next we use this to show that any $c \in H_1(G,\N)$ is a sum of minimal elements.  If $c$ is minimal, we are done.  Otherwise we can write $c$ as a sum of two smaller but nonzero elements.  If these are both minimal then we are done; if not, we can express either one that is not minimal as a sum of two smaller nonzero elements.  If we could keep repeating the process of breaking each non-minimal element into a sum of two smaller nonzero elements forever, then $H_1(G,\N)$ would have an infinite descending chain.  But since this is impossible, the process must terminate, expressing $c$ as a finite sum of minimal elements.
\end{proof}

\begin{eg}
\label{Eg:Nonfreely_generated_homology}
Consider the graph
\[
Q = 
\begin{tikzcd}
u  \arrow[r, bend left = 80, "e_1"] 
\arrow[r, bend left = 20, "e_2"]  & 
v \arrow[l, bend left = 20, "e_3"] 
\arrow[l, bend left = 80, "e_4"]
\end{tikzcd}
\]
One can see that the minimal elements of $H_1(Q,\N)$ are the cycles 
\[   a = e_1 + e_3, \;\; 
     b = e_1 + e_4, \;\;
     c = e_2 + e_3, \;\;
     d = e_2 + e_4 . \]
Thus these generate $H_1(Q,\N)$, but not freely, since $a + c = b + d$.   Moreover, note that a free commutative monoid is always freely generated by its minimal elements, since for any set $S$, the minimal elements of the free commutative monoid $\N[S]$ are precisely the linear combinations of elements of $S$ where one coefficient equals $1$ and all the rest are zero.   Thus, $H_1(Q, \N)$ is not a free commutative monoid.
\end{eg}

We call the minimal elements of $H_1(G,\N)$ \define{minimal cycles}.  We are interested in them because they determine the feedback around all cycles in $G$.   Let us make this precise.  For any commutative monoid $C$ and any $C$-labeled graph $(G,\ell)$, there is a map
\[        C_1(G,\N) \to C \]
defined by
\[    \sum_{e \in E} n_e e  \mapsto \sum_{e \in E} n_e \ell(e) .\]
Indeed this is the unique homomorphism of commutative monoids extending $\ell$ from $E$ to the free commutative monoid on $E$, namely $C_1(G,\N)$.   If we restrict this map to cycles we get a homomorphism that we call 
\[   \tilde{\ell} \maps H_1(G,\N) \to C.\]
Given a cycle $c \in H_1(G,\N)$, we call $\tilde{\ell}(c)$ the \define{feedback} of $(G,\ell)$ around $c$.

\begin{cor}
\label{Cor:Feedback_around_minimal_cycles}
For any commutative monoid $C$ and any $C$-labeled graph $(G,\ell)$, the homomorphism $\tilde{\ell} \maps H_1(G,\N) \to C$ is determined by its values on minimal cycles.
\end{cor}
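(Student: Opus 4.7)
The plan is to deduce this immediately from \cref{Thm:generated_by_minimal_elements}, which says that $H_1(G,\N)$ is generated as a commutative monoid by its minimal elements, i.e., the minimal cycles. Since $\tilde{\ell} \maps H_1(G,\N) \to C$ is a homomorphism of commutative monoids, and a homomorphism is determined by its values on any generating set, this will finish the argument.

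More concretely, I would first remind the reader that $\tilde{\ell}$ was constructed as the unique extension of $\ell$ to a monoid homomorphism on $C_1(G,\N)$, restricted to the submonoid $H_1(G,\N)$, so it is indeed a homomorphism of commutative monoids. Then, given any cycle $c \in H_1(G,\N)$, apply \cref{Thm:generated_by_minimal_elements} to write
\[ c = m_1 + m_2 + \cdots + m_k \]
for some minimal cycles $m_1, \dots, m_k \in H_1(G,\N)$ (where the sum may be empty if $c = 0$). Applying $\tilde{\ell}$ and using that it is a monoid homomorphism gives
\[ \tilde{\ell}(c) = \tilde{\ell}(m_1) + \tilde{\ell}(m_2) + \cdots + \tilde{\ell}(m_k), \]
so the value of $\tilde{\ell}$ on $c$ is computed from its values on minimal cycles, as claimed.

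There is no real obstacle here: all the work has already been done in \cref{Thm:generated_by_minimal_elements}. The only thing one might want to spell out is that any homomorphism out of a commutative monoid is determined by its values on a generating set, which is immediate from the fact that homomorphisms preserve the binary operation and the identity.
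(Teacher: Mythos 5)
Your proof is correct and is essentially identical to the paper's, which simply cites \cref{Thm:generated_by_minimal_elements} and notes that minimal cycles generate $H_1(G,\N)$. You have merely spelled out the routine details (that $\tilde{\ell}$ is a homomorphism and that homomorphisms are determined on generating sets), which the paper leaves implicit.
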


\begin{proof}
This follows from \cref{Thm:generated_by_minimal_elements}: minimal cycles generate $H_1(G,\N)$.  
\end{proof}

While they are defined in an order-theoretic way, minimal cycles have an appealing `geometrical' characterization as homology classes of certain `simple'  loops in $G$.

\begin{defn}
Let $G$ be a graph.  A \define{path} in $G$ is a finite sequence of edges $e_1, \dots e_n$ such that $t(e_i) = s(e_{i+1})$ for $i = 1, \dots, n-1$.   An \define{loop} is a path that ends where it starts, meaning $s(e_1) = t(e_n)$.
\end{defn}

We can denote any path in $G$ as follows:
\[  v_0 \xrightarrow{e_1} v_1 \xrightarrow{e_2} \cdots \xrightarrow{e_{n-1}} v_{n-1} \xrightarrow{e_n} v_n \]
and a loop as follows:
\[  v_0 \xrightarrow{e_1} v_1 \xrightarrow{e_2} \cdots \xrightarrow{e_{n-1}} v_{n-1} \xrightarrow{e_n} v_0 \]
Note from \cref{Sec:Motifs} that the paths in a graph $G$ are precisely the morphisms in the free category on $G$.

Any path $\gamma$ in $G$ gives an element of $[\gamma] \in C_1(G,\N)$, defined to be the sum of that path's edges.  That is, if
\[   \gamma = \left( v_0 \xrightarrow{e_1} v_1 \xrightarrow{e_2} \cdots \xrightarrow{e_{n-1}} v_{n-1} \xrightarrow{e_n} v_n \right)
\]
then we define
\[   [\gamma] = e_1 + \cdots + e_n .\]
If $\gamma$ is a loop then $[\gamma]$ is a cycle since then
\[  s(e_1) + \cdots + s(e_n) = t(e_1) + \cdots + t(e_n) .\]

\begin{defn}
Two loops $\gamma, \delta$ in a graph $G$ are \define{homologous} if $[\gamma] = [\delta]$.
\end{defn}

\begin{defn}
A loop 
\[  v_0 \xrightarrow{e_1} v_1 \xrightarrow{e_2} \cdots \xrightarrow{e_{n-1}} v_{n-1} \xrightarrow{e_n} v_0 \]
is \define{simple} if all the vertices $v_0, \dots, v_{n-1}$ are distinct.
\end{defn}

It is easy to see that if a loop $\gamma$ is not simple, we can chop it where it crosses itself, obtaining two loops $\delta$ and $\eta$ such that 
\[      [\gamma] = [\delta] + [\eta] \]
and $[\delta], [\eta] \ne 0$.  Thus, $[\gamma]$ cannot be minimal if $\gamma$ is not simple.  In fact a much stronger result holds:

\begin{thm}
\label{Thm:simple_loops_and_minimal_cycles}
For any graph $G$, there is a bijection between homology classes of simple loops in $G$ and minimal cycles in $G$, which sends all loops equivalent to the simple loop $\gamma$ to the cycle $[\gamma]$.
\end{thm}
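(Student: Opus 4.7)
The map sends the homology class of a simple loop $\gamma$ to $[\gamma] \in H_1(G,\N)$; it is well-defined and injective tautologically, since by definition two loops are homologous iff their chains agree. So the content reduces to two claims: (i) $[\gamma]$ is minimal whenever $\gamma$ is simple, and (ii) every minimal cycle $c \in H_1(G,\N)$ has the form $[\gamma]$ for some simple loop $\gamma$.

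For (i), write $\gamma = v_0 \xrightarrow{e_1} v_1 \xrightarrow{e_2} \cdots \xrightarrow{e_n} v_0$. Because $v_0, \dots, v_{n-1}$ are distinct, the edges $e_1, \dots, e_n$ are pairwise distinct, so $[\gamma] = e_1 + \cdots + e_n$ has every coefficient equal to $1$. If $[\gamma] = c_1 + c_2$ in $H_1(G,\N)$, then $c_1$ is supported on some subset $S \subseteq \{e_1, \dots, e_n\}$ with all nonzero coefficients equal to $1$. Again by simplicity, the only edges of $\gamma$ with source or target $v_i$ are $e_i$ and $e_{i+1}$ (indices taken cyclically), so the cycle condition $C[s](c_1) = C[t](c_1)$ evaluated at $v_i$ says $e_i \in S$ if and only if $e_{i+1} \in S$. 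Hence $S$ is either empty or all of $\{e_1, \dots, e_n\}$, giving $c_1 = 0$ or $c_1 = [\gamma]$, so $[\gamma]$ is minimal.

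For (ii), given a minimal cycle $c = \sum_e n_e\, e$, let $H$ be the directed multigraph with $n_e$ copies of each edge in the support of $c$, restricted to vertices that $c$ actually touches. The cycle condition says $H$ has equal in- and out-degree at every vertex. If the underlying undirected graph of $H$ were disconnected, each connected component would contribute a nonzero sub-cycle and express $c$ as a sum of two nonzero cycles, contradicting minimality; so $H$ is weakly connected. By the directed Euler circuit theorem, $H$ then admits a directed Eulerian circuit, i.e., a loop $\gamma$ in $G$ traversing each edge of $H$ the correct number of times, so $[\gamma] = c$. If $\gamma$ were not simple, some vertex would repeat other than via $v_0 = v_n$, and chopping $\gamma$ at that repeat would yield two loops $\delta, \eta$ with $[\gamma] = [\delta] + [\eta]$ and both chains nonzero, contradicting minimality. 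Hence $\gamma$ is simple.

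The main obstacle is the Eulerian step: invoking directed Euler is quickest, but a self-contained construction is also available. Starting from any edge of $c$ and subtracting each traversed edge from the running residual, one checks that at any non-starting vertex the residual has strictly more outgoing edge-count than incoming edge-count, so the walk can always be extended, and when it first returns to its start weak connectedness lets one splice in further sub-walks at revisited vertices to exhaust all of $H$. Packaging this splicing cleanly, without conflating multiplicities with ordinary graphs, is the main technical bookkeeping hurdle.
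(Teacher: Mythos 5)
Your proof is correct. Part (i) (a simple loop gives a minimal cycle) is essentially the paper's \cref{Lem:simple_loops_and_minimal_cycles_1}: both arguments exploit that $C_1(G,\N)$ is free on edges, that simplicity makes $e_1,\dots,e_n$ distinct with all coefficients of $[\gamma]$ equal to $1$, and that comparing source and target sums vertex-by-vertex forces the support $S$ to be closed under $e_i \mapsto e_{i+1}$, hence empty or everything. Part (ii) takes a genuinely different route. The paper (\cref{Lem:simple_loops_and_minimal_cycles_2}) runs a greedy extension: pick an edge $\le c$, keep appending edges while $[\delta] \le c$ and the visited vertices stay distinct (extendability being forced by the cycle condition at the endpoint), and stop at the first repeated vertex, which yields a \emph{simple} loop $\gamma$ with $0 \ne [\gamma] \le c$; minimality then gives $[\gamma] = c$ in one step. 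You instead realize $c$ \emph{exactly} as a closed walk: form the balanced multigraph $H$ with multiplicities $n_e$, use minimality to rule out disconnection, invoke the directed Euler circuit theorem to get a loop $\gamma$ with $[\gamma] = c$, and then use minimality a second time (via the chop-at-a-repeated-vertex decomposition the paper states just before the theorem) to force $\gamma$ simple. Your route proves a slightly stronger intermediate fact --- any cycle whose support multigraph is balanced and weakly connected is $[\gamma]$ for a single loop --- at the cost of importing (or, as you note, re-deriving with some multiset bookkeeping) the Eulerian circuit theorem; the paper's greedy argument is more self-contained, never needs to represent $c$ itself by a loop, and uses minimality only once. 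Both are valid proofs of the theorem.
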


\begin{proof}
This follows from Lemmas \ref{Lem:simple_loops_and_minimal_cycles_1} and \ref{Lem:simple_loops_and_minimal_cycles_2} below.
\end{proof}

\begin{lem}
\label{Lem:simple_loops_and_minimal_cycles_1}
If $\gamma$ is a simple loop in a graph $G$ then $[\gamma]$ is a minimal cycle.
\end{lem}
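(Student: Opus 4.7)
The plan is to exploit the very restrictive shape of $[\gamma]$ as an element of $C_1(G,\N) \cong \N[E]$ when $\gamma$ is simple. First I would observe that the distinctness of the vertices $v_0, \dots, v_{n-1}$ forces the edges $e_1, \dots, e_n$ of $\gamma$ to be pairwise distinct: if $e_i = e_j$ for $i \ne j$ then $v_{i-1} = v_{j-1}$ and $v_i = v_j$, contradicting simplicity (the degenerate case $n=1$, a single loop edge, being trivially minimal). Hence $[\gamma] = e_1 + \cdots + e_n$ has coefficient $1$ on each $e_i$ and $0$ on every other edge of $G$.

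Next I would invoke the fact already used in the proof of \cref{Thm:generated_by_minimal_elements} that the canonical preorder on $H_1(G,\N)$ is the restriction of the one on $C_1(G,\N)$, which on $\N[E]$ is componentwise. Thus any $c \in H_1(G,\N)$ with $c \le [\gamma]$ must be of the form $c = \sum_{i \in S} e_i$ for some subset $S \subseteq \{1, \dots, n\}$. It then remains to show that the cycle condition, combined with the distinctness of the $v_k$, forces $S$ to be either empty or the whole of $\{1, \dots, n\}$.

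For this, I would write out the defining equation $C[s](c) = C[t](c)$ in $C_0(G,\N) \cong \N[V]$ and compare the coefficient of each $v_k$ (for $k = 0, \dots, n-1$) on both sides. Since $s(e_i) = v_{i-1}$, $t(e_i) = v_i$, and the $v_0, \dots, v_{n-1}$ are distinct with the single identification $v_n = v_0$, each such comparison collapses to an equation of indicators $[k \in S] = [k+1 \in S]$, with indices read cyclically modulo $n$. Chaining these equations gives $[1 \in S] = [2 \in S] = \cdots = [n \in S]$, so $S \in \{\emptyset, \{1,\dots,n\}\}$ and thus $c \in \{0, [\gamma]\}$, proving that $[\gamma]$ is minimal.

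I do not anticipate any serious obstacle here: once simplicity is translated into the statement that each $e_i$ occurs with coefficient $1$ in $[\gamma]$, the verification reduces to the combinatorial observation that along a cycle of distinct vertices the indicator function of a sub-cycle is forced to be constant. The only mildly delicate point is the bookkeeping of the cyclic index $i = n$ versus $i = 1$ around the closing vertex $v_n = v_0$, which I would make explicit to avoid an off-by-one error.
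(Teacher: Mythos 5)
Your proposal is correct and follows essentially the same route as the paper's proof: restrict the canonical preorder from $C_1(G,\N)\cong\N[E]$ to see that any $c \le [\gamma]$ has the form $\sum_{i\in S} e_i$, then use the cycle condition together with the distinctness of $v_0,\dots,v_{n-1}$ to force $S$ to be empty or all of $\{1,\dots,n\}$. Your explicit chaining of the indicator equations $[k\in S]=[k+1\in S]$ around the loop just spells out the step the paper states more briefly.
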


\begin{proof}
Consider a simple loop
\[  \gamma = \Big( v_0 \xrightarrow{e_1} v_1 \xrightarrow{e_2} \cdots \xrightarrow{e_n} v_0 \Big). \]
Then
\[  [\gamma] = \sum_{i=1}^n e_i .\]
Any cycle $c \le [\gamma] $ must be a chain less than or equal to $c$ in $C_1(G,\N)$, and since $C_1(G,\N)$ is the free commutative monoid on the set of edges of $G$, any chain $c \le [\gamma]$ must be of the form 
\[ c = \sum_{i \in S} e_i  .\]
where $S \subseteq \{1, \dots, n\} $.  We have 
\[ \N[s](c) = \sum_{i \in S} v_{i-1} , \quad \N[t](c) = \sum_{i \in S} v_i .\]
For $c$ to be a cycle we must have $\N[s](c) = \N[t](c)$.  But since all the vertices $v_1, \dots, v_n$ are distinct (while $v_0 = v_n$), the two sums above can only be equal if $S$ is all of $\{1, \dots, n\}$, in which case $c = [\gamma]$, or $S$ is empty, in which case $c = 0$.   (Note that since $C_0(G,\N)$ is free on the set of vertices of $G$, the two sums can only be equal if they are `visibly' equal: there are no extra relations.)  Thus $[\gamma]$ is minimal.  
\end{proof}

\begin{lem}
\label{Lem:simple_loops_and_minimal_cycles_2}
For each minimal cycle $c$ in a graph $G$ there exists a simple loop $\gamma$ in $G$ such that $[\gamma] = c$.
\end{lem}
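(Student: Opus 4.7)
My plan is to construct a simple loop $\gamma$ in $G$ satisfying $0 \ne [\gamma] \le c$ in $H_1(G,\N)$; minimality of $c$ will then force $[\gamma] = c$. The loop $\gamma$ will be extracted as the innermost self-crossing of a walk built along edges in the support of $c$.

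Write $c = \sum_{e \in E} n_e e$ and let $S = \{e \in E : n_e > 0\}$, which is a finite set. The cycle condition $\N[s](c) = \N[t](c)$ in the free commutative monoid $C_0(G,\N) = \N[V]$ is equivalent to the vertex-wise identity
\[
  \sum_{e \in S,\, s(e) = v} n_e \;=\; \sum_{e \in S,\, t(e) = v} n_e \qquad \text{for every } v \in V.
\]
In particular, any vertex that can be entered through an edge of $S$ can also be exited through one. I would start with any $e_1 \in S$, set $v_0 = s(e_1)$ and $v_1 = t(e_1)$, and inductively choose $e_{i+1} \in S$ with $s(e_{i+1}) = v_i$, setting $v_{i+1} = t(e_{i+1})$. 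Since only finitely many vertices appear in $S$, some $v_k$ must coincide with an earlier $v_j$; picking $k$ minimal with this property, the segment $\gamma = \bigl(v_j \xrightarrow{e_{j+1}} \cdots \xrightarrow{e_k} v_k = v_j\bigr)$ is a simple loop, because $v_j, \ldots, v_{k-1}$ are pairwise distinct by minimality of $k$.

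Next I would verify $[\gamma] \le c$ in $H_1(G,\N)$. As observed in the proof of \cref{Lem:simple_loops_and_minimal_cycles_1}, the edges of a simple loop are pairwise distinct, so $[\gamma] = e_{j+1} + \cdots + e_k$ is a sum of distinct elements of $S$, each appearing with coefficient $1 \le n_{e_i}$; hence $[\gamma] \le c$ coefficient-wise in $C_1(G,\N) = \N[E]$. Writing $c = [\gamma] + r$ for the resulting $r \in \N[E]$, cancellativity of $\N[V]$ applied to $\N[s](c) = \N[t](c)$ and $\N[s]([\gamma]) = \N[t]([\gamma])$ gives $\N[s](r) = \N[t](r)$, so $r \in H_1(G,\N)$ witnesses $[\gamma] \le c$ in the submonoid $H_1(G,\N)$. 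Minimality of $c$, combined with $[\gamma] \ne 0$, then forces $[\gamma] = c$.

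The only delicate step is ensuring that the walk never halts, which is exactly what the vertex-wise in-equals-out identity supplies. A small but crucial subtlety worth flagging is that one must extract the \emph{innermost} self-intersection $j < k$, rather than try to close the walk back at the starting vertex $v_0$, since the walk might enter a cycle that never returns to $v_0$.
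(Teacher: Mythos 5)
Your proposal is correct and follows essentially the same route as the paper: walk forward along edges in the support of $c$ (the outgoing edge always exists because $\N[s](c)=\N[t](c)$), stop at the first repeated vertex to extract a simple loop $\gamma$ with $0 \ne [\gamma] \le c$, and invoke minimality. The only presentational difference is that you make explicit the cancellativity argument showing $[\gamma] \le c$ in $H_1(G,\N)$ rather than merely in $C_1(G,\N)$, a point the paper handles once and for all in the proof of \cref{Thm:generated_by_minimal_elements}.
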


\begin{proof}
Let $c$ be a minimal cycle.  It is nonzero, so choose an edge with $e_1 \le c$ in $C_1(G,\N)$.  Denote this edge as $v_0 \xrightarrow{e_1} v_1$.  If $v_1 = v_0$ the path $v_0 \xrightarrow{e_1} v_1$ is a loop, say $\gamma$, so $[\gamma] = e_1$ is a nonzero cycle, so by the minimality of $c$ we must have $c = [\gamma]$ and we are done. If on the other hand $v_1 \ne v_0$ then $e_1$ is not a cycle, so $c$ must be the sum of $e_1$ and one or more edges, and at least one of these edges must have source $v_1$, since otherwise it would be impossible to have $s(c) = t(c)$.    Choose one such edge and call it $v_1 \xrightarrow{e_2} v_2$.  We now have a path 
\[   \delta = \Big( v_0 \xrightarrow{e_1} v_1 \xrightarrow{e_2} v_2 \Big) \] 
with $\delta \le c$ and with $v_0, v_1$ distinct.

We continue along these lines by carrying out an inductive procedure.  Assume we have a path 
\[  \delta = \Big( v_0 \xrightarrow{e_1} \cdots \xrightarrow{e_n} v_n \Big) \]
with $[\delta] \le c$ and $v_0, \dots, v_{n-1}$ distinct.  If $v_n$ equals any of the previous vertices, say $v_n = v_i$ with $0 \le i < n$, then we obtain a simple loop 
\[  \gamma = \Big( v_i \xrightarrow{e_{i+1}} \cdots \xrightarrow{e_n} v_n \Big). \] 
Since $[\gamma]$ is a nonzero cycle less than or equal to $[\delta]$ and thus $c$, by the minimality of $c$ we must have $c = [\gamma]$ and we are done.   Otherwise all the vertices $v_0, \dots, v_n$ are distinct, so $[\delta]$ is not a cycle, so $c$ must be a sum of $[\delta]$ and one or more edges, and at least one of these edges must have source $v_n$, since otherwise it would be impossible to have $s(c) = t(c)$.   Choose one such edge and denote it by $v_n \xrightarrow{e_{n+1}} v_{n+1}$.  We now have a path
\[  \delta' = \Big( v_0 \xrightarrow{e_1} v_1 \xrightarrow{e_2} \cdots \xrightarrow{e_n} v_n \xrightarrow{e_{n+1}} v_{n+1} \Big) \]
with $[\delta'] \le c$ and $v_0, \dots , v_n$ distinct.

Since $c$ is a finite sum of edges this procedure must eventually terminate: i.e., eventually $v_k$ must equal one of the previous vertices $v_0, \dots, v_{k-1}$, which are themselves all distinct.  We thus obtain a simple loop $\gamma$ giving a nonzero cycle $[\gamma] \le c$, and by the minimality of $c$ we must have $[\gamma] = c$. 
\end{proof}

Having established a bijection between minimal cycles and homology classes of simple loops, we naturally want to know when two simple loops are homologous.  It turns out that the only way to obtain a loop homologous to a simple loop is to change where the loop starts.  More precisely:

\begin{prop}
\label{Lem:simple_loop_equivalence}
Every loop homologous to a simple loop
\[  v_0 \xrightarrow{e_1} v_1 \xrightarrow{e_2} \cdots \xrightarrow{e_{n-1}} v_{n-1} \xrightarrow{e_n} v_0 \]
is of the form
\[  v_k \xrightarrow{e_{k+1}} v_{k+1} \xrightarrow{e_{k+2}} \cdots \xrightarrow{e_{n+k-1}} v_{n+k-1} \xrightarrow{e_{n+k}} v_k  \]
where we treat the subscripts as elements of $\Z/n$ and do addition mod $n$.
\end{prop}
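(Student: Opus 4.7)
The plan is to exploit the fact that a simple loop uses each of its edges exactly once, so its homology class determines the underlying multiset of edges, and then observe that these edges form a rigid cyclic structure on the vertex set that forces any loop realizing this multiset to be a rotation of $\gamma$.

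First I would verify that the edges $e_1, \ldots, e_n$ of the simple loop $\gamma$ are pairwise distinct. Since the vertices $v_0, \ldots, v_{n-1}$ are distinct (and $v_n = v_0$), the ordered pair $(v_{i-1}, v_i)$ of source and target of $e_i$ is different for each $i$ (keeping in mind that $e_n$ has target $v_0 = v_n$ but source $v_{n-1}$, which differs from all other sources). Consequently in $C_1(G,\N) \cong \N[E]$ we have $[\gamma] = e_1 + \cdots + e_n$ with each $e_i$ occurring with coefficient exactly $1$.

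Next, suppose $\delta = (w_0 \xrightarrow{f_1} w_1 \xrightarrow{f_2} \cdots \xrightarrow{f_m} w_0)$ is a loop with $[\delta] = [\gamma]$. Since $\N[E]$ is freely generated by $E$, the multiset of edges traversed by $\delta$ must be exactly $\{e_1, \ldots, e_n\}$, each appearing once; in particular $m = n$. It remains to identify the possible orderings. Now I would observe the key rigidity: among the edges $\{e_1, \ldots, e_n\}$, the unique edge with source $v_i$ is $e_{i+1}$ (indices mod $n$), since $v_0, \ldots, v_{n-1}$ are distinct. Hence, writing $w_0 = v_k$ for the appropriate $k$, the composability condition $s(f_{j+1}) = t(f_j)$ forces $f_1 = e_{k+1}$, which lands at $v_{k+1}$; then $f_2$ must have source $v_{k+1}$ and so equals $e_{k+2}$; iterating, $f_j = e_{k+j}$ and $w_j = v_{k+j}$ for all $j$, indices taken mod $n$.

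The main (and essentially only) obstacle is keeping the cyclic indexing honest; there is no deep combinatorial content beyond noting that each vertex of a simple loop has a unique outgoing edge within the loop's edge set. After $n$ steps the path returns to $v_k$, so $\delta$ is indeed a loop of the stated form, completing the proof.
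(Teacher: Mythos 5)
Your proposal is correct and follows essentially the same route as the paper: distinctness of the vertices forces distinctness of the edges, freeness of $\N[E]$ forces $m=n$ and the $f_j$ to be a permutation of the $e_i$, and composability then forces that permutation to be cyclic. The only difference is that you spell out the last step (via the observation that each $v_i$ has a unique outgoing edge among $e_1,\dots,e_n$), which the paper simply asserts.
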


\begin{proof}
Suppose 
\[ \gamma = \Big( v_0 \xrightarrow{e_1} v_1 \xrightarrow{e_2} \cdots \xrightarrow{e_{n-1}} v_{n-1} \xrightarrow{e_n} v_0 \Big) \]
is a simple loop and 
\[ \delta = \Big( w_0 \xrightarrow{f_1} w_1 \xrightarrow{f_2} \cdots \xrightarrow{f_{m-1}} w_{m-1} \xrightarrow{f_m} w_0 \Big) \]
is a loop homologous to $\gamma$, so 
\[   e_1 + \cdots + e_n 
= f_1 + \cdots + f_m . \]
Since $\gamma$ is simple, all the vertices $v_0, \dots, v_{n-1}$ are distinct, so the edges $e_1, \dots, e_n$ are distinct.  We must thus have $m = n$, with the list of edges $f_1, \dots, f_n$ being some permutation of the list of edges $e_1, \dots, e_n$.  Since all the vertices $v_0, \dots, v_{n-1}$ are distinct, the only permutations that make $\delta$ into a loop are cyclic permutations.
\end{proof}

\section{Emergent feedback loops}
\label{Sec:Emergent_feedback_loops}

When we glue together two graphs, the resulting graph can have loops that are not contained in either of the original graphs.   These are called `emergent loops'.   In applications, these represent new possible feedback loops that arise when we combine two systems.  Since feedback loops are fundamental in system dynamics, it is important to pay close attention to this phenomenon.

There are several ways to study emergent loops.  The first is to study emergent paths in graphs formed by composing open graphs.  We described a double category $\lOpen(\Gph/G_L)$ of open $L$-labeled graphs in \cref{Thm:Open_set-labeled_graphs}, and we can define the double category of \define{open graphs},  $\lOpen(\Gph)$, to be the special case where $L$ is the one-element set, so the labeling becomes trivial.  To compose two open graphs:
\[
\begin{tikzpicture}[scale=1]
\node (A) at (0,0) {$\disc(A)$};
\node (B) at (1,1) {$X$};
\node (C) at (2,0) {$\disc(B)$};
\node (D) at (4,0) {$\disc(B)$};
\node (E) at (5,1) {$Y$};
\node (F) at (6,0) {$\disc(C)$};
\path[->,font=\scriptsize,>=angle 90]
(A) edge node[above, pos=0.3]{$$} (B)
(C) edge node[above, pos=0.3]{$$}(B)
(D) edge node[above, pos=0.3]{$$} (E)
(F) edge node[above, pos=0.3]{$$}(E);
\end{tikzpicture}
\]
we take their pushout over the discrete graph on the set $B$ defining their shared interface:
\[
\begin{tikzpicture}[scale=1]
\node (A) at (0,0) {$\disc(A)$};
\node (B) at (1,1) {$X$};
\node (C) at (2,0) {$\disc(B)$};
\node (D) at (3,1) {$Y$};
\node (E) at (4,0) {$\disc(C).$};
\node (F) at (2,2) {$X +_{\disc(B)} Y$};
\node (push) at (2,1.4) {\rotatebox{135}{$\ulcorner$}};
\path[->,font=\scriptsize,>=angle 90]
(A) edge node[above, pos=0.3]{$$} (B)
(C) edge node[above, pos=0.3]{$$}(B)
(C) edge node[above, pos=0.3]{$$} (D)
(E) edge node[above, pos=0.3]{$$}(D)
(B) edge node[above, pos=0.3]{$$} (F)
(D) edge node[above, pos=0.3]{$$}(F);
\end{tikzpicture}
\]
To study paths in the pushout graph, let us simplify the diagram to the relevant part:
\[
\begin{tikzpicture}[scale=1]
\node (B) at (1,1) {$X$};
\node (C) at (2,0) {$\disc(B).$};
\node (D) at (3,1) {$Y$};
\node (F) at (2,2) {$X +_{\disc(B)} Y$};
\node (push) at (2,1.4) {\rotatebox{135}{$\ulcorner$}};
\path[->,font=\scriptsize,>=angle 90]
(C) edge node[left, pos=0.3]{$ $}(B)
(C) edge node[right, pos=0.3]{$ $} (D)
(B) edge node[above, pos=0.3]{$ $} (F)
(D) edge node[above, pos=0.3]{$ $}(F);
\end{tikzpicture}
\]
Paths in the pushout graph are morphisms in the free category $\Free(X +_{\disc(B)} Y)$.   The left adjoint functor $\Free \maps \Gph \to \cat$ preserves pushouts, so we can apply this functor to the above diagram and obtain the following pushout diagram:
\[
\begin{tikzpicture}[scale=1.2]
\node (B) at (1,1) {$\Free(X)$};
\node (C) at (2,0) {$\Disc(B)$};
\node (D) at (3,1) {$\Free(Y)$};
\node (F) at (2,2) {$\Free(X) +_{\Disc(B)} \Free(Y)$};
\node (push) at (2,1.4) {\rotatebox{135}{$\ulcorner$}};
\path[->,font=\scriptsize,>=angle 90]
(C) edge node[left, pos=0.3]{$ $}(B)
(C) edge node[right, pos=0.3]{$ $} (D)
(B) edge node[above, pos=0.3]{$ $} (F)
(D) edge node[above, pos=0.3]{$ $}(F);
\end{tikzpicture}
\]
since $\Disc(B) \cong \Free(\disc(B))$.  For convenience let us make the abbreviation
\[   \C = \Free(X) +_{\Disc(B)}  \Free(Y) .\]
Then the set of paths from a vertex $v$ of the pushout graph to the vertex $w$ is the homset $\C(v,w)$.  We would like to understand which of these paths are `emergent', i.e., not already paths in $X$ or $Y$.

Let $M\{x,y\}$ be the free monoid on two generators $x$ and $y$.  Then the category $\C$ has a unique $M\{x,y\}$-grading where each edge of $X$ has grade $x$ and each edge of $Y$ has grade $y$.    Let $I\{x,y\}$ be the quotient of the monoid $M\{x,y\}$ by relations saying that $x$ and $y$ are idempotent:
\[                x^2 = x, \quad y^2 = y .\]
By
 \cref{Prop:Change_of_labeling_monoid}, the quotient map  $M\{x,y\} \to I\{x,y\}$ lets us push forward the $M\{x,y\}$-grading on $\C$ to obtain an $I\{x,y\}$-grading on this category.   This has a simplifying effect: now the grade of a path records only how it goes back and forth between the graphs $X$ and $Y$. 

We can write
\[    \C(v,w) = \bigsqcup_{m \in I\{g,h\}}  \C_m(v,w)  \]
where $\C_m(v,w)$ is the set of paths of grade $m$ from $v$ to $w$.    If $v$ and $w$ are vertices in $X$, the paths in $\C_x(v,w)$ are those that already existed in $X$, while all the other paths in $C(v,w)$
are `emergent'.  Similarly, if $v$ and $w$ are vertices in $Y$, all the paths not in $\C_y(v,w)$ are emergent.   The grading gives a measure of the `amount of emergence' involved in each path.   

\begin{eg} 
\label{Eg:emergence}
Suppose $X$ is the graph in red below while $Y$ is the graph in blue, and $B$ is the set of purple vertices:
\[
\begin{tikzcd}[column sep = 1.5em, row sep = 0.4em]
  &  &   & \p{b} \arrow[ld, ""', darkred, -{Stealth[length=2mm]}, 
  line width=0.5mm, bend right, shift left] & 
\\
\re{a} \arrow[rrru, darkred, -{Stealth[length=2mm]}, line width=0.5mm,  bend left] 
\arrow[rrrdd, "", darkred, -{Stealth[length=2mm]}, bend right=60] &  & 
\re{d} \arrow[r, darkred, -{Stealth[length=2mm]}, line width=0.5mm, bend right=15] 
 & \p{e}  \arrow[r, "", blue, -{Stealth[length=2mm]}, line width=0.5mm, bend left=15]               
 & \bl{f} \arrow[ld, "", blue, -{Stealth[length=2mm]}, line width=0.5mm, bend left] 
 \arrow[lu, ""', blue, -{Stealth[length=2mm]},  bend right, shift right] 
 & i   \arrow[ull, "", blue, -{Stealth[length=2mm]}, bend right=50]                
 \\
 & \re{c} \arrow[lu, "", darkred, -{Stealth[length=2mm]}, bend left] 
 \arrow[ru, ""', darkred, -{Stealth[length=2mm]}, bend right]   & & 
 \p{g} \arrow[ll, "", darkred, -{Stealth[length=2mm]}, line width=0.5mm, bend left]                                      
 \\
  &  &   & \p{h}  \arrow[uur, "", -{Stealth[length=2mm]}, blue, bend right]   
  \arrow[uurr, "", blue, -{Stealth[length=2mm]}, bend right]                                                                                                             
\end{tikzcd}
\]
If we treat the category $\C$ as $M\{x,y\}$-graded, the path in bold from $a$ to $c$ has grade $x^3 y^2 x$, if we adopt the convention of multiplying elements in $M\{x, y\}$ from left to right as we move along the path.
If we switch to treating $\C$ as $I\{x,y\}$-graded, the same path has grade $xyx \in I\{x,y\}$.  This indicates that the path starts in $X$, then goes into $Y$, and then comes back into $X$.
\end{eg}

Another approach to emergence involves understanding how the first homology of the pushout graph $X +_{\disc(B)} Y$ is related to that of the graphs $X$ and $Y$.     For this let us assume that the maps $j$ and $k$ in the pushout diagram
\[
\begin{tikzpicture}[scale=1]
\node (B) at (1,1) {$X$};
\node (C) at (2,0) {$\disc(B)$};
\node (D) at (3,1) {$Y$};
\node (F) at (2,2) {$X +_{\disc(B)} Y$};
\node (push) at (2,1.4) {\rotatebox{135}{$\ulcorner$}};
\path[->,font=\scriptsize,>=angle 90]
(C) edge node[left, pos=0.3]{$j$}(B)
(C) edge node[right, pos=0.3]{$k$} (D)
(B) edge node[left, pos=0.5]{$i_X$} (F)
(D) edge node[right, pos=0.5]{$i_Y$}(F);
\end{tikzpicture}
\]
are monic.   It follows that 
all the arrows in the diagram are monic, so $X$ and $Y$ are subgraphs of 
$X +_{\disc(B)} Y$ having no edges in common, and their intersection is the discrete graph $\disc(B)$.   
This is the case in \cref{Eg:emergence}.   To simplify the notation further, let us define
\[        X \cup Y = X +_{\disc(B)} Y, \qquad X \cap Y = \disc(B).   \]

In this situation, the first homology with coefficients in an abelian group $A$ is well-understood.  Here the classical Mayer--Vietoris sequence \cite{Hatcher2009} reduces to the following exact sequence of abelian groups:
\[   0 \to H_1(X,A)  \oplus H_1(Y,A) \xrightarrow{} H_1(X \cup Y, A) \xrightarrow{} \]
\[ H_0(X \cap Y,A)  \xrightarrow{} H_0(X,A) \oplus H_0(Y,A) \xrightarrow{} H_0(X \cup Y, A)
\xrightarrow{}  0 .\]
The abelian group $H_1(X,A) \oplus H_1(Y,A)$ consists of \define{non-emergent cycles}, 
which existed in $X$ and $Y$ before we glued these  graphs together along the vertices in $B$.
It is a subgroup of the group of actual interest, $H_1(X \cup Y)$.   The quotient of $H_1(X \cup Y)$ by this subgroup may thus be seen as the group of `emergent' 1-cycles. 

We thus focus on this portion of the Mayer--Vietoris exact sequence:
\begin{equation}
\label{Eq:Mayer--Vietoris}
0 \to H_1(X,A)  \oplus H_1(Y,A) \xrightarrow{\iota} H_1(X \cup Y, A) \xrightarrow{\partial} 
 H_0(X \cap Y,A) 
\end{equation}
and define the group of \define{emergent 1-cycles} to be 
\[  \displaystyle{  \textrm{coker} \, \iota =  \frac{H_1(X \cup Y, A)}{\text{im} \, \iota} = \frac{H_1(X \cup Y, A)}{\ker \partial} } . \]
There is a short exact sequence expressing the group of 1-cycles on $X \cup Y$ as an extension of the group of emergent 1-cycles by the group of non-emergent 1-cycles:
\[  0 \to H_1(X,A)  \oplus H_1(Y,A) \xrightarrow{\iota} H_1(X \cup Y, A) \to \textrm{coker} \, \iota \to 0.\]

To go further we need explicit formulas for $\iota$ and $\partial$:
 \begin{itemize}
 \item $\iota = \iota_X + \iota_Y$ where $\iota_X \maps H_1(X,A) \to H_1(X \cup Y, A) $ is the map induced from the inclusion $i_X \maps X \to X \cup Y$, and similarly $\iota_Y \maps H_1(Y,A) \to H_1(X \cup Y, A)$ is the map induced from $i_Y$.
 \item $\partial$ may be defined as follows.  We can uniquely express $c \in H_1(X \cup Y,A) \subseteq C_1(X \cup Y,A)$ as a sum of 1-chains 
 \[   c_X \in C_1(X,A) \subseteq C_1(X \cup Y,A), \qquad
  c_Y \in C_1(Y,A) \subseteq C_1(X \cup Y,A) .\]   
 These 1-chains are generally not themselves 1-cycles, but their boundaries sum to zero, and the boundary of $c_X$ lies in $C_1(X,A)$ while that of $c_Y$ lies in $C_1(Y,A)$,  so
 \[    dc_X = -dc_Y \in C_0(X \cap Y,A) \subseteq C_0(X \cup Y,A). \]
 We thus define $\partial c$ by
 \[  \partial c = dc_X = -dc_Y \in C_0(X \cap Y,A) = H_0(X \cap Y,A) .\]
 \end{itemize}
Our choice to define $\partial c$ to be $d c_X$ rather than $d c_Y$ is an arbitrary sign convention.

Next we need to adapt these ideas to homology with coefficients in a commutative monoid $C$.  A case of special interest is $C = \N$, since the first homology of a graph with coefficients in $\N$ controls feedback as in \cref{Cor:Feedback_around_minimal_cycles}, and we have a fairly clear picture of this first homology thanks to Theorems \ref{Thm:generated_by_minimal_elements} and \ref{Thm:simple_loops_and_minimal_cycles}. 

We continue to assume we have a union of graphs whose intersection is a discrete graph, i.e., a graph with no edges:
\[
\begin{tikzpicture}[scale=1]
\node (B) at (1,1) {$X$};
\node (C) at (2,0) {$X \cap Y$};
\node (D) at (3,1) {$Y$};
\node (F) at (2,2) {$X \cup Y$};
\node (push) at (2,1.4) {\rotatebox{135}{$\ulcorner$}};
\path[->,font=\scriptsize,>=angle 90]
(C) edge node[left, pos=0.3]{$j$}(B)
(C) edge node[right, pos=0.3]{$k$} (D)
(B) edge node[left, pos=0.5]{$i_X$} (F)
(D) edge node[right, pos=0.5]{$i_Y$}(F);
\end{tikzpicture}
\]
It is notationally convenient to treat $H_1(X,C) \subseteq C_1(X,C), H_1(Y,C) \subseteq  C_1(Y,C)$ and $H_1(X \cup Y,C)$ as submonoids of $C_1(X \cup Y,C)$.   Since every edge of $X \cup Y$ is either an edge of $X$ or of $Y$, but not both, we have
\[        C_1(X \cup Y,C) = C_1(X,C) \oplus C_1(Y,C)   \]
where we write an equals sign because this is an `internal direct sum': every element $c \in C_1(X \cup Y,C)$ can be uniquely written as a sum of elements $c_X \in C_1(X,C)$ and $c_Y \in C_1(Y,C)$.   We define monoid homomorphisms $p_X, p_Y \maps C_1(X \cup Y, C) \to  C_1(X \cup Y,C)$ by
\[   p_X(c) = c_X, \qquad p_Y(c) = c_Y . \]
It is also convenient to treat $C_0(X,C), C_0(Y,C)$ and $C_0(X \cap Y,C)$ as submonoids of $C_0(X \cup Y,C)$.
We abbreviate
\[        C[s], C[t] \maps C_1(X \cup Y, C) \to C_0(X \cup Y,C) \]
simply as $s$ and $t$.  We can also use these notations for the maps
\[        C[s], C[t] \maps C_1(X) \to C_0(Y) \]
and
\[        C[s], C[t] \maps C_1(Y) \to C_0(Y) \]
without confusion, since these are restrictions of the maps $s$ and $t$ defined on all of $C_1(X \cup Y,C)$.

The natural map from the coproduct $X + Y$ to the pushout $X \cup Y$ induces a map on homology
\[       \iota \maps H_1(X , C) \oplus H_1(Y,C) \to H_1(X \cup Y, C) .\] 
This map $\iota$ sends any pair $(c_X, c_Y)$ to the sum $c_X + c_Y$.  This map is a monomorphism, but perhaps not an isomorphism, since there may be emergent cycles.  The following Mayer--Vietoris-like theorem clarifies the situation.  The theorem simplifies when $C$ is \define{cancellative}, meaning that 
\[     c + e = d + e \implies c = d  .\]

\begin{thm}
\label{Thm:Mayer--Vietoris_1}
If $C$ is a commutative monoid and $X,Y$ are subgraphs of a graph $X \cup Y$ whose intersection
$X \cap Y$ is a discrete graph, then the following is an equalizer diagram in the category of commutative monoids:
\[    
\begin{tikzcd}
     H_1(X,C) \oplus H_1(Y,C) \arrow[r, "\iota"] &
     H_1(X \cup Y, C)  \arrow[r, "{(s p_X, s p_Y)}", shift left = 1.5] \arrow[r, "{(t p_X, t p_Y)}", swap, shift right = 1.5]  &
     C_0(X, C) \oplus C_0(Y, C).
\end{tikzcd}
 \]
 If $C$ is cancellative, the following diagram is also an equalizer:
\[    
\begin{tikzcd}
     H_1(X,C) \oplus H_1(Y,C) \arrow[r, "\iota"] &
     H_1(X \cup Y, C)  \arrow[r, "s p_X", shift left = 1.5] \arrow[r, "t p_X", swap, shift right = 1.5]  &
     C_0(X, C).
\end{tikzcd}
 \]
 as is the analogous diagram with $p_X$ replaced by $p_Y$.
\end{thm}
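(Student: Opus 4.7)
The plan is to reduce both assertions to the direct-sum decomposition $C_1(X \cup Y, C) = C_1(X, C) \oplus C_1(Y, C)$, which holds because $X \cap Y$ is discrete, so every edge of $X \cup Y$ lies in exactly one of $X$ or $Y$. This decomposition means that every $c \in C_1(X \cup Y, C)$ admits a unique splitting $c = p_X(c) + p_Y(c)$, and both $s$ and $t$ respect it in the sense that $s(c) = s(p_X(c)) + s(p_Y(c))$ and similarly for $t$. Throughout I would regard $H_1(X,C)$, $H_1(Y,C)$ and $H_1(X \cup Y,C)$ as submonoids of $C_1(X \cup Y,C)$, with $\iota$ acting as $(c_X, c_Y) \mapsto c_X + c_Y$.

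For the first equalizer, I would first verify that $\iota$ equalizes the two parallel arrows: for any $(c_X, c_Y) \in H_1(X,C) \oplus H_1(Y,C)$ one has
\[ (sp_X, sp_Y)(c_X + c_Y) = (s(c_X), s(c_Y)) = (t(c_X), t(c_Y)) = (tp_X, tp_Y)(c_X + c_Y) \]
because $c_X$ and $c_Y$ are themselves cycles. For the universal property, let $\phi \maps A \to H_1(X \cup Y, C)$ be any homomorphism equalizing the two maps. Then for each $a \in A$ we have $s(p_X(\phi(a))) = t(p_X(\phi(a)))$ in $C_0(X,C)$ and $s(p_Y(\phi(a))) = t(p_Y(\phi(a)))$ in $C_0(Y,C)$, so $p_X \circ \phi$ factors through $H_1(X, C)$ and $p_Y \circ \phi$ factors through $H_1(Y,C)$. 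The map $(p_X \circ \phi, p_Y \circ \phi) \maps A \to H_1(X,C) \oplus H_1(Y,C)$ is then the unique factorization of $\phi$ through $\iota$, by uniqueness of the direct-sum decomposition.

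The cancellative version reduces to the previous one once we check that the weaker equalizer condition $s(p_X(c)) = t(p_X(c))$ alone, together with $c$ being a 1-cycle, already forces $s(p_Y(c)) = t(p_Y(c))$. Since $c \in H_1(X \cup Y, C)$ we have $s(p_X(c)) + s(p_Y(c)) = t(p_X(c)) + t(p_Y(c))$ in $C_0(X \cup Y, C)$, and this monoid is cancellative because it is a coproduct of copies of the cancellative monoid $C$ indexed by the vertices of $X \cup Y$. Hence cancelling the common summand $s(p_X(c)) = t(p_X(c))$ gives $s(p_Y(c)) = t(p_Y(c))$, and the argument for the first equalizer applies verbatim. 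The same reasoning works with the roles of $p_X$ and $p_Y$ exchanged.

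The only real point to watch is bookkeeping: keeping straight which monoid each element lives in, and observing that $p_X$ and $p_Y$ are genuine monoid homomorphisms (immediate since the decomposition $c = p_X(c) + p_Y(c)$ is determined edge-by-edge on the generators of $C_1(X \cup Y, C)$). There is no deeper obstacle, as the argument is essentially a direct verification once the direct sum structure is in hand.
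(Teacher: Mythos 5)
Your proposal is correct and follows essentially the same route as the paper's proof: both rest on the internal direct sum $C_1(X \cup Y, C) = C_1(X,C) \oplus C_1(Y,C)$, identify the equalizer condition with $p_X c$ and $p_Y c$ each being cycles, and reduce the cancellative case to the first by cancelling $s p_X c = t p_X c$ from $sc = tc$ in the cancellative monoid $C_0(X \cup Y, C)$. The only cosmetic difference is that you verify the universal property against an arbitrary test homomorphism, whereas the paper directly identifies the image of the (monic) map $\iota$ with the equalizer submonoid; these amount to the same argument.
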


\begin{proof}
For the first equalizer, first we show that $sp_X = tp_X$ and $s p_Y = t p_Y$ on the image of $\iota$.   Any element in the image of $\iota$ is of the form $c = c_X + c_Y$ with $c_X \in H_1(X,C)$ and $c_Y \in H_1(Y,C)$.    Since $p_X c_X = c_X$ and $p_X c_Y = 0$, we have
\[    s p_X c  = s c_X = tc_X = t p_X c.  \]
Similarly we have $s p_Y c = t p_Y c$.

Next we show that any $c \in H_1(X \cup Y, C)$ with $s p_X c = t p_X c$ and $s p_Y c = t p_Y c$ is in the image of $\iota$.  These equations say that $p_X c \in H_1(X, C)$ and $p_Y c \in H_1(Y, C)$, so $c = p_X c + p_Y c = \iota(p_X c, p_Y c)$.  

We can reduce the second equalizer diagram to the first if we show  $sp_X c= tp_Xc$ implies $s p_Yc = t p_Yc$ for any $c \in H_1(X \cup Y,C)$.   Here we need $C$ to be cancellative.   Since $c$ is a cycle we have $s c = t c$ and thus
\[    s p_X c + s p_Y c = t p_X c + t p_Y c .\]
Since $C$ is cancellative so is $C_0(X \cup Y,C)$, so we can subtract the equation $s p_X c = t p_X c$
from the above equation and conclude
\[    s p_Y c = t p_Y c . \]
The diagram with $p_X$ replaced by $p_Y$ works the same way.
\end{proof}

\begin{eg}
If $C$ is not cancellative, it is possible to have a cycle $c_X \in H_1(X,C)$ and a 1-chain that is not a cycle, $c_Y \in C_1(Y,C)$, whose sum is a cycle $c_X  + c_Y \in H_1(X \cup Y, C)$.  
\[
\begin{tikzcd}[column sep = 3em]
\p{v} \arrow[r, bend left = 80, darkred, "e_1", "1"'] 
& 
\p{w} \arrow[l, darkred, "1", "e_2"'] 
\arrow[l, bend left = 80, blue, "1", "e_3"']
\end{tikzcd}
\]
Above is a $\lB$-labeled graph where $\lB = \{0,1\}$ with addition as in \cref{Eg:B}.   The subgraph $X$ contains only the red edges, while the subgraph $Y$ contains only the blue edges.  The vertices lie in $X \cap Y$, which is the discrete graph on these two vertices.   Let 
\[     c_X = e_1 + e_2, \qquad c_Y = e_3 .\]
Then $c_X$ is a cycle, $c_Y$ is not a cycle, and $c_X + c_Y$ is a cycle because
\[   s(c_X + c_Y) = s(e_1 + e_2 + e_3) = v + w + w = v + w, \qquad t(c_X + c_Y) = t(e_1 + e_2 + e_3) = w + v + v = v + w .\]
\end{eg}

 We can state a Mayer--Vietoris theorem that more closely resembles Equation \eqref{Eq:Mayer--Vietoris} if we use the monoid homomorphism
\[          q \maps C_0(X,C) \to C_0(X \cap Y,C)  \]
given by
\[          q \left( \sum_{v \in \{\text{vertices of } X \}} c_v \, v \right) = \sum_{v \in \{\text{vertices of } X \cap Y\}} c_v \, v  .\]
That homomorphism kills off all vertices of $X$ that are not also in $Y$.   A priori $H_0(X \cap Y,C)$ is a quotient of $C_0(X \cap Y, C)$, but $X \cap Y$ has no edges so the quotient map is an isomorphism $C_0(X \cap Y,C) \simrightarrow H_0(X \cap Y, C) $.  We shall use this isomorphism to identify these two monoids, and treat $q$ as a monoid homomorphism
\[          q \maps C_0(X,C) \to H_0(X \cap Y,C).  \]

\begin{thm}
\label{Thm:Mayer--Vietoris_2}
If $C$ is a cancellative commutative monoid and $X,Y$ are subgraphs of a graph $X \cup Y$ whose intersection
$X \cap Y$ is a discrete graph, then the following is an equalizer diagram in the category of commutative monoids:
\[    
\begin{tikzcd}
     H_1(X,C) \oplus H_1(Y,C) \arrow[r, "\iota"] &
     H_1(X \cup Y, C)  \arrow[r, "q s p_X", shift left = 1.5] \arrow[r, "q t p_X", swap, shift right = 1.5]  &
     H_0(X \cap Y, C).
\end{tikzcd}
 \]
 By the symmetry between $X$ and $Y$, the analogous diagram with $p_X$ replaced by $p_Y$ is also an equalizer.
\end{thm}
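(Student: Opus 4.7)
The plan is to reduce \cref{Thm:Mayer--Vietoris_2} to the cancellative case of \cref{Thm:Mayer--Vietoris_1}, which already identifies $\iota$ as the equalizer of $s p_X$ and $t p_X$ from $H_1(X \cup Y, C)$ to $C_0(X, C)$. The idea is to show that, as submonoids of $H_1(X \cup Y, C)$, the equalizer of $(s p_X, t p_X)$ coincides with the equalizer of $(q s p_X, q t p_X)$; equivalently, for every cycle $c \in H_1(X \cup Y, C)$,
\[  q s p_X c = q t p_X c \quad \Longleftrightarrow \quad s p_X c = t p_X c. \]
The implication $\Leftarrow$ is immediate by applying $q$, so the content lies in $\Rightarrow$.

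For this, I would exploit the decomposition of the vertex set
\[    V(X) = \big(V(X) \setminus V(Y)\big) \sqcup V(X \cap Y), \]
which induces a direct sum of commutative monoids
\[    C_0(X, C) \,\cong\, C\big[V(X) \setminus V(Y)\big] \,\oplus\, C\big[V(X \cap Y)\big], \]
with $q$ identified as projection onto the second summand. Letting $r$ denote projection onto the first, $s p_X c = t p_X c$ is equivalent to the conjunction of $q s p_X c = q t p_X c$ and $r s p_X c = r t p_X c$, so it suffices to verify that the second always holds for $c \in H_1(X \cup Y, C)$.

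This is the crux of the argument. Since $c$ is a cycle and $c = p_X c + p_Y c$, we have
\[   s p_X c + s p_Y c \,=\, t p_X c + t p_Y c \qquad \text{in } C_0(X \cup Y, C). \]
Because $p_Y c \in C_1(Y, C)$, the 0-chains $s p_Y c$ and $t p_Y c$ are supported on $V(Y)$ and therefore have zero component on $V(X) \setminus V(Y)$. Projecting the displayed equation onto the $V(X) \setminus V(Y)$ summand of $C_0(X \cup Y, C)$ yields $r s p_X c = r t p_X c$, an observation that uses only the direct sum decomposition and no cancellativity. With the two equalizers thus shown to agree as subobjects of $H_1(X \cup Y, C)$, the universal property transports across, and \cref{Thm:Mayer--Vietoris_2} follows.

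The main thing to be disciplined about is where cancellativity enters: not in the combinatorial reduction above, but only through the appeal to \cref{Thm:Mayer--Vietoris_1}, where it was needed to split off $p_Y c$ as a cycle once $p_X c$ had been shown to be one. Modulo that accounting, the second Mayer--Vietoris-like theorem is essentially a corollary of the first.
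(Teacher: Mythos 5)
Your proposal is correct and follows essentially the same route as the paper: both reduce to the cancellative case of \cref{Thm:Mayer--Vietoris_1} by showing $q s p_X c = q t p_X c$ implies $s p_X c = t p_X c$, and both prove this by splitting $C_0$ along the partition of vertices into $V(X)\setminus V(Y)$ and $V(X\cap Y)$, observing that the $V(X)\setminus V(Y)$-component of the cycle condition receives no contribution from edges of $Y$. Your bookkeeping of where cancellativity enters (only via \cref{Thm:Mayer--Vietoris_1}) also matches the paper.
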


\begin{proof}
By \cref{Thm:Mayer--Vietoris_1} it suffices to show that $c \in H_1(X \cup Y,C)$ has $s p_X c = t p_X c$ if and only if $q s p_X c = q t p_X c$.  One direction of the implication is obvious, so we suppose $q s p_X c = q t p_X c$
and aim to show that $s p_X c = t p_X c$.

We write
 \[           c = \sum_{e \in \{\text{edges of } X \cup Y\}} c_e e .\]
 Since $c$ is a cycle we have
 \[   \sum_{e \in \{\text{edges of } X \cup Y\}} c_e s(e) =  \sum_{e \in \{\text{edges of } X \cup Y\}} c_e t(e)  .\]
There are three mutually exclusive choices for a vertex in $X \cup Y$: it is either 1) in $X$ but not $Y$, 2) in $X \cap Y$, or 3) $Y$ but not in $X$.    In case 1) we say the vertex is in $X - Y$ and in case 3) we say the vertex is in $Y - X$, merely by way of abbreviation.  The above equation thus implies three equations, of which the important one is the first:
  \[   \sum_{e \in \{ \text{edges of } X \cup Y \text{ whose source is in } X - Y\} } \!\!\!\!\!\!\!\!\!\!\!\!\!\!\!\!\!\!\!\!\!\!\!\!\! c_e s(e) \quad \quad \quad =  \quad \sum_{e \in \{\text{edges of } X \cup Y  \text{ whose target is in } X - Y\}} \!\!\!\!\!\!\!\!\!\!\!\!\!\!\!\!\!\!\!\!\!\!\!\!\! c_e t(e)  \]
 Since an edge of $X \cup Y$ whose source is in $X - Y$ must be an edge of $X$, this equation is equivalent to
   \[   \sum_{e \in \{ \text{edges of } X \text{ whose source is in } X - Y\} } \!\!\!\!\!\!\!\!\!\!\!\!\!\!\!\!\!\!\!\!\!\!\!\!\!  c_e s(e) \quad\quad\quad = \quad \sum_{e \in \{\text{edges of } X  \text{ whose target is in } X - Y\}} \!\!\!\!\!\!\!\!\!\!\!\!\!\!\!\!\!\!\!\!\!\!\!\!\!  c_e t(e)  .\]
Since $q s p_X c  = q t p_X c$ we also know that
 \[     \sum_{e \in \{\text{edges of } X \text{ whose source is in } X \cap Y\}} \!\!\!\!\!\!\!\!\!\!\!\!\!\!\!\!\!\!\!\!\!\!\!\!\!  \quad c_e s(e) \quad\quad\quad = \quad \sum_{e \in \{\text{edges of } X \text{ whose target is in } X \cap Y\}} \!\!\!\!\!\!\!\!\!\!\!\!\!\!\!\!\!\!\!\!\!\!\!\!\! c_e t(e) .\]
 Adding the last two equations we get
 \[     \sum_{e \in \{\text{edges of } X\}} c_e s(e) =  \sum_{e \in \{\text{edges of } X\}} c_e t(e) .\]
This says $s p_X c = t p_X c$, as desired. 
\end{proof}

\section{Conclusion}

We hope these results can be used to further the use of causal loop diagrams---that is, $\{+,-\}$-labeled graphs---and other monoid-labeled graphs in system dynamics, systems biology and other related fields.     Software already exists that explicitly uses category theory to work with causal loop diagrams and other monoid-labeled graphs.   For example, AlgebraicJulia is a software platform for doing scientific computing with categories, and it has a package called StockFlow.jl \cite{BaezLiLibkindOsgoodPatterson2023,StockFlow} that allows for the creation and composition of open causal loop diagrams \cite{BaezLiLibkindOsgoodRedekopp2023}.   Redekopp has created a program called ModelCollab, based on StockFlow.jl, whose web-based interface enables collaborative work with causal loop diagrams \cite{ModelCollab}.   Separately, Patterson and collaborators are building a software framework called CatColab, based on double categories, and they have used it to create tools for finding motifs in $M$-labeled graphs when:
 \begin{itemize}
 \item
 $M = \{+,-\}$: causal loop diagrams \cite{CatColabCausalLoop}.
 \item
 $M = \{+,-\} \times \lB$: causal loop diagrams with boolean-valued delays \cite{CatColabBoolean}.
 \item
  $M = \{+,-,0\}$: causal loop diagrams with indeterminate effects \cite{CatColabIndeterminate}.
\end{itemize}
We hope that some more ideas from this paper can be embodied in software.

On the other hand, many large databases of labeled graphs have already been compiled, some listed in Pathguide \cite{Pathguide}.  A visually charismatic example is the KEGG database of pathways for metabolism, biological information processing, and other biological processes \cite{KEGG2025}.  There is a great opportunity to apply new mathematics and software to analyze these labeled graphs.   In addition, new mathematics can help catalyze the interaction between systems biology and system dynamics \cite{Sterman2002}.

There are also many mathematical questions to study.  We briefly mention two.  First,  in \cref{Sec:Rig-labeled_graphs} we tentatively explored rig-labeled graphs.  Given a rig $R$, it might be interesting to develop a category of $R$-labeled graphs where several edges `in series' can be mapped to a single edge with their labels being multiplied:
\[                      v_0 \xrightarrow{r_1} v_1 \xrightarrow{r_2} \cdots \xrightarrow{r_k} v_k \qquad \qquad \mapsto   \qquad \qquad         v_0 \xrightarrow{r_1 \cdots r_k} v_k\]
and also several edges `in parallel' can be mapped to a single edge with their labels being added:
\[
\begin{tikzcd}
v \arrow[rr, "r_1", bend left = 60] \arrow[rr, "r_2", bend left = 30]  \arrow[rr, "r_k"', bend right]  & \vdots & w && \mapsto &&
v \arrow[rr,  "r_1 + \cdots + r_k"]   && w .
\end{tikzcd}
\]
We have seen that the former pattern shows up in the \emph{opposite} of the category of monoid-labeled graphs and Kleisli morphisms, while the latter shows up in the category of commutative-monoid labeled graphs and additive morphisms.

Second, it would be interesting to understand which commutative monoids arise as $H_1(G,\N)$ for some finite graph $G$.  As seen in \cref{Eg:Nonfreely_generated_homology}, not all of them are free.  The example there has a presentation with four minimal cycles as generators, one relation, and no syzygies (that is, relations between relations).   The following graph 
\[
R = 
\begin{tikzcd}[column sep = 3 em, row sep = 3 em]
u  \arrow[rr, bend left = 60, "e_1"] 
\arrow[rr, bend left = 25, "e_2"]  & &
v \arrow[ll, bend left = 0, "e_3"] 
\arrow[ll, bend left = 25, "e_4"]
\arrow[ll, bend left = 60, "e_5"]
\end{tikzcd}
\]
has $H_1(R,\N)$ with 6 minimal cycles as generators, 3 relations, and one syzygy.  This suggests that it might be interesting to study the homology of the commutative monoids $H_1(G,\N)$ for finite graphs $G$.    (For a simple introduction to the homology of monoids see Lafont \cite[Sec. 4]{LafontProute1991}.)  More generally, it suggests that the homology $H_1(G,C)$ with coefficients in a commutative monoid $C$ is worth studying more deeply.


\begin{thebibliography}{50}

\bibitem{Aduddell}
R.\ Aduddell, J.\ Fairbanks, A.\ Kumar, P.\ S.\ Ocal, E.\ Patterson and B.\ T.\ Shapiro, A compositional account of motifs, mechanisms, and dynamics in
 biochemical regulatory networks, \textsl{Compositionality} \textbf{6} (2024).  \doi{https://doi.org/10.32408/compositionality-6-2}.
Also available as \href{http://arxiv.org/abs/2301.01445}{arXiv:2301.01445}.

\bibitem{AdamekRosicky}
J.\ Ad\'amek and J.\ Rosick\'y, \textsl{Locally Presentable and Accessible Categories}, Cambridge U.\ Press, 1994.

\bibitem{Alon2006} 
U.\ Alon, \textsl{An Introduction to Systems Biology: Design Principles of Biological Circuits}, Chapman and Hall, 2006.

\bibitem{Alon2007}
U.\ Alon, Network motifs: theory and experimental approaches, \textsl{Nat.\ Rev.\ Genet.} \textbf{8} 6 (2007), 450--461.  \doi{https://doi.org/10.1038/nrg2102}.

\bibitem{Baez2025}
J.\ C.\ Baez, Double categories of open systems: the cospan approach.   Available as \href{https://arxiv.org/abs/2509.22584}{arXiv:2509.22584}.

\bibitem{BaezCourser2020}
J.\ C. Baez and K.\ Courser, Structured cospans, \textsl{Theor.\ Appl.\ Categ.} \textbf{35} 48 (2020), 1771--1822.   Also available as \href{http://arxiv.org/abs/1911.04630}{arXiv:1911.04630}.  

\bibitem{BaezCourserVasilakopoulou2022}
J.\ C.\ Baez, K.\ Courser and C.\ Vasilakopoulou, Structured versus decorated cospans, \textsl{Compositionality} \textbf{4} 3 (2022).  \doi{https://doi.org/10.32408/compositionality-4-3}.   Also available as \href{http://arxiv.org/abs/2101.09363}{arXiv:2101.09363}.

\bibitem{BaezLiLibkindOsgoodRedekopp2023}
J.\ C.\ Baez, X.\ Li, S.\ Libkind, N.\ D.\ Osgood and E.\ Redekopp, A categorical framework for modeling with stock and flow diagrams, in \textsl{Mathematics of Public Health: Mathematical Modelling from the Next Generation}, eds.\ J.\ David and J.\ Wu, Springer, 2003,  pp.\ 175--207.  \doi{https://doi.org/10.1007/978-3-031-40805-2}.  Also available as \href{http://arxiv.org/abs/2211.01290}{arXiv:2211.01290}.

\bibitem{BaezMaster2020}
J.\ C.\ Baez and J.\ Master, Open Petri nets, \textsl{Math. Struct.\ Comp.\ Sci.} \textbf{30} 3 (2020), 314--341.  \doi{https://doi.org/10.1017/S0960129520000043}.  Also available as \href{http://arxiv.org/abs/1808.05415}{arXiv:1808.05415}.

\bibitem{BaezLiLibkindOsgoodPatterson2023}
J.\ Baez, X.\ Li, S.\ Libkind, N.\ D.\ Osgood and E.\ Patterson, Compositional modeling with stock and flow diagrams, \textsl{Electron.\ Proc.\ Theor.\ Comput.\ Sci.} \textbf{380} (2023), 77--96.  \doi{http://doi.org/10.4204/EPTCS.380.5}.  Also available as \href{http://arxiv.org/abs/2205.08373}{arXiv:2205.08373}.

\bibitem{CatColabCausalLoop}
CatColab: causal loop diagram, \url{https://catcolab.org/help/theory/causal-loop}.

\bibitem{CatColabBoolean}
CatColab: causal loop diagram with delays, \url{https://catcolab.org/help/theory/causal-loop-delays}.

\bibitem{CatColabIndeterminate}
CatColab: causal loop diagram with indeterminates, \url{https://catcolab.org/help/theory/indeterminate-causal-loop}

\bibitem{CourserThesis}
K.\ Courser, \textsl{Open Systems: A Double Categorical Perspective}, Ph.D. thesis, Department of Mathematics, U.\ C.\ Riverside, 2020.   Available as \href{http://arxiv.org/abs/2008.02394}{2008.02394}.

\bibitem{chaudhuri2024mathematicalframeworkstudyorganising}
A.\ Chaudhuri, Ralf K\"ohl and O.\ Wolkenhauer, A mathematical framework to study organising principles in graphical representations of biochemical processes.  Available as \href{http://arxiv.org/abs/2410.18024}{arXiv:2410.18024}.

\bibitem{Davidson2006}
E.\ H.\ Davidson, \textsl{The Regulatory Genome: Gene Regulatory Networks in Development and Evolution}, Elsevier, 2006.

\bibitem{FiadeiroSchmitt}  J.\ L.\ Fiadeiro and V.\ Schmitt, Structured co-spans: an algebra of interaction protocols, in \textsl{International Conference on Algebra and Coalgebra in Computer Science}, eds.\ T.\ Mossakowski, U.\ Montanari and M.\ Haveraaen, Lecture Notes in Computer Science 4624, Springer, Berlin, 2007, pp.\ 194--208.

\bibitem{Fong2015}
B.\ Fong, Decorated cospans, \textsl{Theory Appl.\ Categ.} \textbf{30} 33 (2015), 1096--1120.  Also available as \href{http://arxiv.org/abs/1502.00872}{arXiv:1502.00872}.

\bibitem{FongThesis}
B.\ Fong, \textsl{The Algebra of Open and Interconnected Systems}, Department of Computer Science, Oxford University, 2016.  Available as \href{http://arxiv.org/abs/1609.05382}{arXiv:1609.05382}.

\bibitem{Forrester1961}
J.\ W.\ Forrester, \textsl{Industrial Dynamics}, Pegasus Communications, 1961.

\bibitem{Forrester1969}
J.\ W.\ Forrester, \textsl{Urban Dynamics}, Pegasus Communications, 1969.

\bibitem{Ginzburg1968}
A.\ Ginzburg, \textsl{Algebraic Theory of Automata}, Academic Press, 1968.

\bibitem{Golan1999}
J.\ S.\ Golan, \textsl{Semirings and Their Applications}, Springer, 1999.

\bibitem{Hatcher2009}
A.\ Hatcher, \textsl{Algebraic Topology}, Cambridge U.\ Press, 2009.

\bibitem{HS}  L.\ W.\ Hansen and M.\ Shulman, Constructing symmetric monoidal bicategories functorially.  Available as \href{https://arxiv.org/abs/1910.09240}{arXiv:1910.09240}.

\bibitem{Hovmand2014}
P.\ S.\ Hovmand, \textsl{Community Based System Dynamics}, Springer, 2014.

\bibitem{JohnsonYau2021} N.\ Johnson and D.\ Yau, \textsl{2-Dimensional Categories}, Oxford U.\ Press, 
Oxford, 2021.   Also available as \href{https://arxiv.org/abs/2002.06055}{arXiv:2002.06055}.

\bibitem{KEGG2025}
M.\ Kanehisa, M.\ Furumichi, Y.\ Sato, Y.\ Matsuura and M.\ Ishiguro-Watanabe, KEGG: biological systems database as a model of the real world, \textsl{Nucleic Acids Res.} \textbf{53} D1 (2025), D672--D677.  \doi{http://doi.org/10.1093/nar/gkae909}.

\bibitem{SystemsBiologyTextbook}
E.\ Klipp, W.\ Liebermeister, C.\ Wierling, and A.\ Kowald, \textsl{Systems Biology}, John Wiley and Sons, 2016.

\bibitem{Kauffman1969}
S.\ Kauffman, Homeostasis and differentiation in random genetic control networks, \textsl{Nature} \textbf{224}
 5215 (1969), 177--178.  \doi{https://doi.org/10.1038/224177a0}.

\bibitem{Kock1971}
A.\ Kock, Closed categories generated by commutative monads, 
\textsl{J.\ Aust.\ Math.\ Soc.} \textsl{12} 4 (1971), 404--424.
\doi{http://doi.org/10.1017/S1446788700010272}.

\bibitem{Lembani2018}
M.\ Lembani, H.\ De Pinho, P.\ Delobelle, C.\ Zarowski, T.\ Mathole and A.\ Ager, Understanding key drivers of performance in the provision of maternal health services in eastern cape, South Africa: a systems analysis using group model building, \textsl{BMC Health Serv.\ Res.} \textbf{18} (2018), 1--12. \doi{https://doi.org/10.1186/s12913-018-3726-1}.

\bibitem{LoregianRiehl}
F.\ Loregian and E.\ Riehl, Categorical notions of fibration, \textsl{Expo.\ Math.} \textbf{38} 4 (2020), 496--514.  \doi{https://doi.org/10.1016/j.exmath.2019.02.004}.

\bibitem{LafontProute1991}
Y.\ Lafont and A.\ Prout{\'{e}}, Church--Rosser property and homology of monoids, \textsl{Math. Struct.\ Comp.\ Sci.} \textbf{1} 3 (1991) 297--326.   \doi{http://doi.org/10.1017/S096012950000133X}.

\bibitem{MacLaneMoerdijk2012}
S.\ Mac Lane and I.\ Moerdijk, \textsl{Sheaves in Geometry and Logic: A First Introduction to Topos Theory}, Springer, 2012.

\bibitem{Massey}
W.\ S.\ Massey, \textsl{Singular Homology Theory}, Springer, 1980.

\bibitem{MasterThesis}
J.\ Master, \textsl{Composing Behaviors of Networks}, Ph.D. thesis, Department of Mathematics, U.\ C.\ Riverside, 2021.  Available as \href{http://arxiv.org/abs/2105.12905}{arXiv:2105.12905}.

\bibitem{Master2021}
J.\ Master, The open algebraic path problem, in \textsl{9th Conference on Algebra and Coalgebra in Computer Science (CALCO 2021)}, eds.\ F.\ Gadducci and A.\ Silva; Dagstuhl Publishing, pp.\ 20:1--20:20.  \doi{https://doi.org/10.4230/LIPIcs.CALCO.2021.20}.  Also available as \href{http://arxiv.org/abs/2005.06682}{arXiv:2005.06682}.

\bibitem{MeseguerMontanari1990}
J.\ Meseguer and U.\ Montanari, Petri nets are monoids, 
\textsl{Information and Computation} \textbf{88} 2 (1990), 105–155.
\doi{https://doi.org/10.1016/0890-5401(90)90013-8}.

\bibitem{mi_systems_2015}
H.\ Mi \textsl{et al}, Systems Biology Graphical Notation: Activity Flow Language Level 1 Version 1.2, \textsl{J.\ Integr.\ Bioinform.} \textbf{12} (2) (2015) 265.  \doi{https://doi.org/10.2390/biecoll-jib-2015-265}.

\bibitem{ModelCollab}
ModelCollab, \url{https://modelcollab.usask.ca/}.

\bibitem{le_novere_systems_2009}
N.\ L.\ Nov{\`{e}}re \textit{et al}, The systems biology graphical notation, \textsl{Nature Biotechnology} \textbf{27} 8 (2009), 735--741.  \doi{https://doi.org/10.1038/nbt.1558}.

\bibitem{Pathguide}
Pathguide: the pathway resource list, \url{http://pathguide.org/}.

\bibitem{Patterson2023}
E.\ Patterson, Structured and decorated cospans from the viewpoint of double category theory, \textsl{Electron.\ Proc.\ Theor.\ Comput.\ Sci.} \textbf{397} (2023).  \doi{https://doi.org/10.4204/EPTCS.397.13}.   Also available as \href{https://arxiv.org/abs/2304.00447}{arXiv:2304.00447}.  

\bibitem{rougny_systems_2019}
A.\ Rougny \textsl{et al}, Systems Biology Graphical Notation: Process Description Language Level 1 Version 2.0, \textsl{J.\ Integr.\ Bioinform.} \textbf{16} 2 (2019).  \doi{https://doi.org/10.1515/jib-2019-0022}.

\bibitem{Shulman2008} M.\ Shulman, Framed bicategories and monoidal fibrations, \textsl{Theory Appl.\ Categ.\ }\textbf{20} (2008), 650--738.   Also available as \href{https://arxiv.org/abs/0706.1286}{arXiv:0706.1286}.

\bibitem{SookaRwashana2011}
C.\ Sooka and A.\ Rwashana-Semwanga, Modeling the dynamics of maternal healthcare in Uganda: a system dynamics approach, \textsl{World J.\ Model.\ Simul.} \textbf{7} (2011), 163--172.

\bibitem{sorokin_systems_2015}
A.\ Sorokin \textsl{et al}, Systems Biology Graphical Notation: Entity Relationship Language Level 1 Version 1.2, \textsl{J.\ Integr.\ Bioinform.} \textbf{12} (2) (2015) 265.  \doi{https://doi.org/10.2390/biecoll-jib-2015-264}.

\bibitem{Sterman2000}
J.\ D.\ Sterman, \textsl{Business Dynamics}, McGraw-Hill, 2000.

\bibitem{Sterman2002}
J.\ D.\ Sterman, \textsl{System Dynamics: Systems Thinking and Modeling for a Complex World}, McGraw-Hill, 2002.

\bibitem{StockFlow}
Stockflow.jl, \url{https://github.com/AlgebraicJulia/StockFlow.jl}.

\bibitem{tyson2010functional} 
J.\ J.\ Tyson and B.\ Nov{\'a}k, Functional motifs in biochemical reaction networks \textsl{Annu.\ Rev.\ Phys.\ Chem.} \textbf{61} 1 (2010), 219--240.  \doi{https://doi.org/10.1146/annurev.physchem.012809.103457}.

\end{thebibliography}
\end{document}